\documentclass[12pt, a4paper]{amsart}
\setlength{\textheight}{23cm}
\setlength{\textwidth}{15cm}
\setlength{\oddsidemargin}{0.5cm}
\setlength{\evensidemargin}{0.5cm}
\setlength{\topmargin}{0cm}

\usepackage{amsmath,amsfonts,amssymb,amsthm,amscd}
%\modulolinenumbers[5]

%\bibliographystyle{elsarticle-num}
%%%%%%%%%%%%%%%%%%%%%%%
\newtheorem{theorem}{Theorem}[section]
\newtheorem{proposition}[theorem]{Proposition}
\newtheorem{lemma}[theorem]{Lemma}

\theoremstyle{definition}
\newtheorem{definition}[theorem]{Definition}

\theoremstyle{remark}
\newtheorem{remark}[theorem]{Remark}

\newtheorem{problem}[theorem]{Problem}

\newcommand{\fk}{\mathfrak{k}}

\newcommand{\fp}{\mathfrak{p}}

\newcommand{\fu}{\mathfrak{u}}

%    Absolute value notation

%    Blank box placeholder for figures (to avoid requiring any
%    particular graphics capabilities for printing this document).

\begin{document}

\title[Hamiltonian stable Lagrangian tori in $\mathbb{C}H^n$]{On Hamiltonian stable Lagrangian tori\\ in complex hyperbolic spaces}
%\tnotetext[mytitlenote]{Fully documented templates are available in the elsarticle package on \href{http://www.ctan.org/tex-archive/macros/latex/contrib/elsarticle}{CTAN}.}

%% Group authors per affiliation:
\author[T. Kajigaya]{Toru Kajigaya}%\fnref{myfootnote}}
\address{Department of mathematics, School of Engineering, Tokyo Denki University,
5 Senju Asahi-cho, Adachi-ku, Tokyo 120-8551, JAPAN\endgraf
 National Institute of Advanced Industrial Science and Technology (AIST), MathAM-OIL
}

%\fntext[myfootnote]{}

%% or include affiliations in footnotes:
%\author[mymainaddress,mysecondaryaddress]{Elsevier Inc}
%\ead[url]{www.elsevier.com}

%\author[mysecondaryaddress]{Global Customer Service\corref{mycorrespondingauthor}}
%\cortext[mycorrespondingauthor]{Corresponding author}
\email{kajigaya@mail.dendai.ac.jp}

%\address[mymainaddress]{1600 John F Kennedy Boulevard, Philadelphia}
%\address[mysecondaryaddress]{360 Park Avenue South, New York}

\subjclass[2010]{53D12;  53C42}
\date{\today}
\keywords{Hamiltonian stable Lagrangian submanifolds, Complex hyperbolic spaces}

\begin{abstract}
In this paper, we investigate the Hamiltonian-stability of Lagrangian tori in the complex hyperbolic space $\mathbb{C}H^n$. We consider a standard Hamiltonian $T^n$-action on $\mathbb{C}H^n$, and show that every Lagrangian $T^n$-orbits in $\mathbb{C}H^n$ is H-stable when $n\leq 2$ and there exist infinitely many H-unstable $T^n$-orbits when $n\geq 3$. On the other hand, we prove a monotone $T^n$-orbit in $\mathbb{C}H^n$ is H-stable and rigid for any $n$. Moreover, we see almost all Lagrangian $T^n$-orbits in $\mathbb{C}H^n$ are not Hamiltonian volume minimizing when $n\geq 3$ as well as the case of $\mathbb{C}^n$ and $\mathbb{C}P^n$.
\end{abstract}

\maketitle

\section{Introduction}

A Lagrangian submanifold $L$ in an almost K\"ahler manifold $(M,\omega, J)$ is called {\it Hamiltonian-minimal} ({\it H-minimal} for short, or {\it Hamiltonian stationary}) if $L$ is a critical point of the volume functional under Hamiltonian deformations. Moreover, an $H$-minimal Lagrangian is called {\it Hamiltonian-stable} ({\it H-stable} for short) if the second variation of the volume functional is nonnegative for any Hamiltonian deformations. These notions were introduced by Y.-G.Oh in \cite{Oh1} and \cite{Oh2}, and studied as a natural generalization of special Lagrangian submanifolds. We refer to \cite{AO}, \cite{MO}, \cite{Oh1}, \cite{Oh2}, \cite{Ono} and references therein for explicit examples of H-stable homogeneous Lagrangians in a Hermitian symmetric space, and \cite{JLS} for existence of H-stable Lagrangians in a general compact almost K\"ahler manifold. See also \cite{KK} for a generalization of the notion of H-stability.  
  
When $M$ is the complex Euclidean space $\mathbb{C}^n$ equipped with the standard K\"ahler structure, Oh proved that any Lagrangian torus orbit of the standard Hamiltonian $T^n$-action is H-stable in $\mathbb{C}^n$ \cite{Oh2}. Moreover, Oh conjectured that they are all {\it Hamiltonian-volume minimizing}, i.e. each torus has the least volume in its Hamiltonian isotopy class. However, using a result of Chekanov \cite{Che}, Viterbo \cite{V} first pointed out the conjecture is false for a certain torus orbit, and Iriyeh-Ono \cite{IO} showed that almost all Lagrangian torus orbits are not Hamiltonian volume minimizing, namely, the set of non Hamiltonian volume minimizing $T^n$-orbits is a dense subset in $\mathbb{C}^n$.   It is a remaining problem that a torus orbit of the form $T^k{(a,\ldots, a)}\times T^{n-k}{(b,\ldots, b)}=S^1(a)\times\cdots \times S^1(a)\times S^1(b)\times \cdots \times S^1(b)$ for $a,b>0$ and $k=1,\ldots, n$ is Hamiltonian-volume minimizing or not. 

The situation is similar when $M$ is the complex projective space $\mathbb{C}P^n$. In fact, H. Ono \cite{Ono} first proved that any Lagrangian torus orbit of the standard $T^n$-action on  $\mathbb{C}P^n$ is H-stable, however, Iriyeh-Ono showed that almost all of them are not Hamiltonian volume minimizing. The remaining case includes the Clifford torus, i.e. the unique minimal $T^n$-orbit in $\mathbb{C}P^n$, and it is conjectured that the Clifford torus is Hamiltonian volume minimizing \cite{Oh2}. Also we note that the result is generalized to some torus orbits in a general  compact toric K\"ahler manifold. See \cite{IO} for the details.

It is known that the stability of {\it minimal} Lagrangian submanifold is related to  the curvature of the ambient space. In fact,  any minimal Lagrangian submanifold in a K\"ahler manifold of {\it negative} Ricci curvature is strictly stable in the classical sense, and this is in contrast to the fact that there exists no minimal and stable Lagrangian in $\mathbb{C}P^n$ (See \cite{Oh1}).  As for the Hamiltonian stability, it is pointed out in \cite{IO} and \cite{Ono} that the isoperimetric inequality for simple closed curve implies the Hamiltonian volume minimizing property of the geodesic circle in $\mathbb{R}^2$ and $S^2$, and the problem described above can be regarded as a higher dimensional analogue in $\mathbb{C}^n$ and $\mathbb{C}P^n$, respectively.  Notice that this observation is valid even for a simple closed curve on the hyperbolic plane $\mathbb{H}^2$ since  a similar inequality holds on  $\mathbb{H}^2$  (See \cite{Osserman} or Section 4 in the present paper). However, the higher dimensional analogue of the hyperbolic case is still unknown, and this motivates us  to investigate the H-stability and Hamiltonian volume minimizing property of Lagrangian submanifold in a K\"ahler manifold of negative Ricci curvature.

A natural higher dimensional setting is to consider a compact Lagrangian submanifold in the complex hyperbolic space $\mathbb{C}H^n$. A remarkable fact for $\mathbb{C}H^n$ is that the symplectic geometry of $\mathbb{C}H^n$ is completely the same as $\mathbb{C}^n$, namely, there exists a symplectic diffeomorphism $\Phi: \mathbb{C}H^n\to \mathbb{C}^n$, and hence, any Lagrangian submanifold in $\mathbb{C}^n$ is regraded as a Lagrangian submanifold in $\mathbb{C}H^n$ by the map $\Phi$. Moreover, as pointed out in \cite{HK},  there is a correspondence between compact homogeneous Lagrangian submanifolds in $\mathbb{C}H^n$ and the ones in $\mathbb{C}^n$, and we have many examples of { H-minimal} Lagrangian in $\mathbb{C}H^n$ because any compact homogeneous Lagrangian in a K\"ahler manifold is H-minimal. We note that the compact Lagrangian is never minimal in the classical sense  because any minimal submanifold in $\mathbb{C}^n$ and $\mathbb{C}H^n$ must be non-compact.
Although some compact H-stable Lagrangian  in $\mathbb{C}^n$ are known (see \cite{AO} and \cite{Oh2}),  the stability of the corresponding Lagrangian  in $\mathbb{C}H^n$ might be different from the Euclidean case since the stability depends on the metric.   
In the present paper, we restrict our attention to the torus orbits in $\mathbb{C}H^n$, and investigate the stability.

Let us describe our main results. We equip $\mathbb{C}H^n\simeq SU(1,n)/S(U(1)\times U(n))$ with the standard K\"ahler structure $(\omega, J, g)$ of constant holomorphic sectional curvature $-4$, and regard $\mathbb{C}H^n$ as an open unit ball $B^n=\{z\in \mathbb{C}^n; |z|<1\}$ in the standard way (see Section 3). We consider the maximal torus $T^n$ of a maximal compact subgroup $K=S(U(1)\times U(n))$ of $G=SU(1,n)$.  Then the $T^n$-action on $\mathbb{C}H^n$ is Hamiltonian and the principal orbits are all Lagrangian. We take a diffeomorphism between $\mathbb{C}H^n$ and $\mathbb{C}^n$ by
\begin{align*}
\Phi: \mathbb{C}H^n\simeq B^n\to \mathbb{C}^n,\quad z\mapsto \sqrt{\frac{1}{1-|z|^2}}z.
\end{align*}
Then, it turns out that $\Phi$ is a $K$-equivariant symplectic diffeomorphism. Moreover, the $T^n$-action on $\mathbb{C}H^n$ is equivariant to the $T^n$-action on $\mathbb{C}^n$ via the symplectic diffeomorphism $\Phi$ (see Section 3 and 4). In particular, there exists a one-to-one correspondence between the $T^n$-orbits in $\mathbb{C}H^n$ and the $T^n$-orbits in $\mathbb{C}^n$. We denote the principal $T^n$-orbit in $\mathbb{C}^n$  by $T(r_1,\ldots r_n):=S^1(r_1)\times \cdots \times S^1(r_n)$, where $r_i$ is the radius of the $i$-th circle.

We say an H-stable Lagrangian is {\it rigid} if the null space of the second variation under  Hamiltonian deformations is spanned by normal projections of holomorphic Killing vector fields on $\mathbb{C}H^n$.
We show the following results:

\begin{theorem}\label{maintheorem} 

\begin{enumerate}
\item[(a)] If $n\leq 2$,  every Lagrangian $T^n$-orbits in $\mathbb{C}H^n$ is H-stable and rigid.
\item[(b)] Suppose $n\geq 3$. If there exist distinct indices $i,j,k\in \{1,\ldots,n\}$ such that the inequality
\begin{align*}
\Big(1+\sum_{l=1}^n r_l^2\Big)^{1/2}r_i<r_jr_k
\end{align*}
holds, then the $T^n$-orbit $\Phi^{-1}(T(r_1,\ldots, r_n))$ is H-unstable in $\mathbb{C}H^n$. In particular, there exist infinitely many H-unstable $T^n$-orbits in $\mathbb{C}H^n$. On the other hand, the monotone $T^n$-orbit $\Phi^{-1}(T(r,\ldots, r))$ is H-stable and rigid  in $\mathbb{C}H^n$ for any $n\geq 1$ and $r>0$.
 \item[(c)] Suppose $n\geq 3$. Then, almost all Lagrangian $T^n$-orbits are not Hamiltonian volume minimizing in $\mathbb{C}H^n$.
\end{enumerate}
\end{theorem}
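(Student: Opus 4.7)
The plan is to adapt Oh's second variation formula for H-minimal Lagrangians to the complex hyperbolic setting and then diagonalize it using the natural Fourier basis on $T^n$. First I would parametrize $L=\Phi^{-1}(T(r_1,\ldots,r_n))$ by angles $(\theta_1,\ldots,\theta_n)\in T^n$ via the map $(\theta_1,\ldots,\theta_n)\mapsto (s_1 e^{i\theta_1},\ldots,s_n e^{i\theta_n})\in B^n$ with $s_i=r_i/\sqrt{1+\sum_l r_l^2}$, and compute the induced Bergman metric, second fundamental form and mean curvature one-form of $L$ in $\mathbb{C}H^n$ explicitly. Unlike the Euclidean case, the induced metric carries nondiagonal cross terms proportional to $s_i s_j\, d\theta_i\, d\theta_j$ coming from the quadratic piece of the Bergman metric, which is the source of the asymmetry in the statement of part (b). Oh's H-minimal second variation
\[
Q(f,f)=\int_L\Bigl(|\Delta_L f|^2-\mathrm{Ric}^{\mathbb{C}H^n}(J\nabla f,J\nabla f)-2\langle\alpha_H,df\rangle^2\Bigr)\,d\mathrm{vol}_L
\]
evaluated on the Fourier modes $f_m=e^{i\langle m,\theta\rangle}$ then reduces $Q(f_m,f_m)$ to an explicit scalar $Q(m;r_1,\ldots,r_n)$, and the stability question becomes the question of the sign of $Q(m;\cdot)$ for $m\in\mathbb{Z}^n$.

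For part (a), the case $n=1$ is immediate, and for $n=2$ the index set is small enough that the sign of $Q(m)$ can be verified directly by a Cauchy--Schwarz/AM--GM argument; I expect equality in these inequalities to characterize exactly the two-dimensional subspace spanned by the normal projections of the holomorphic Killing fields on $\mathbb{C}H^2$, giving rigidity. For part (b), the formula for $Q(m)$ should be homogeneous quadratic in the components of $m$ with positive diagonal coefficients scaled by $(1+\sum_l r_l^2)^{1/2} r_i$ and off-diagonal cross terms of the form $-C\, r_j r_k\, m_j m_k$; choosing $m$ supported on three indices $\{i,j,k\}$ with suitable signs then produces $Q(m)<0$ precisely under the displayed inequality, and varying $r_i$ small against $r_j,r_k$ large supplies infinitely many such unstable orbits. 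For the monotone orbit $r_i\equiv r$, the quadratic form is $S_n$-invariant and decomposes along the trivial, standard and other irreducible representations of $S_n$; a representation-by-representation check should give $Q(m)\geq 0$ with equality only on the Killing directions.

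For part (c), the plan is to pull back the volume-decreasing Hamiltonian isotopies of Iriyeh--Ono in $\mathbb{C}^n$ to $\mathbb{C}H^n$ via $\Phi^{-1}$. Since $\Phi$ is a symplectomorphism, the isotopy is automatically Hamiltonian in $\mathbb{C}H^n$, but the volume is now measured with the Bergman metric. I would localize the Iriyeh--Ono construction so that both $T(r_1,\ldots,r_n)$ and the Chekanov-type perturbation lie inside a small Euclidean neighborhood of the orbit; inside such a neighborhood the ratio of the Bergman and Euclidean volume forms is nearly constant, so a strict Euclidean volume decrease transfers to a strict Bergman volume decrease. Combining this with the density (in the radius parameters) of the Iriyeh--Ono criterion yields the claim.

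The main obstacle I expect is in part (c): the Bergman volume form degenerates as one approaches the boundary of $B^n$, so one cannot apply Iriyeh--Ono globally and must carefully localize the Chekanov deformation while still preserving both the Hamiltonian character of the isotopy and the quantitative Euclidean volume drop. A secondary difficulty is in part (b): because the induced metric on $L$ is not a product metric, deriving a clean closed form for $Q(m)$ and identifying the precise threshold $(1+\sum_l r_l^2)^{1/2} r_i<r_jr_k$ as the exact sign-change locus requires some care with the off-diagonal cross terms and with the mean curvature contribution, which has no Euclidean analogue.
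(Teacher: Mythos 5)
Your overall strategy for (a) and (b) --- diagonalize Oh's second variation over the Fourier modes $e^{\sqrt{-1}\langle{\bf m},\theta\rangle}$ and study the sign of the resulting scalar $Q({\bf m})$ --- is the strategy the paper uses, but several concrete points in your plan are wrong and would derail the computation. First, the second variation formula you wrote is not Oh's formula for a non-minimal H-minimal Lagrangian: the essential term $2S(\nabla u,\nabla u,JH)$ (second fundamental form contracted with the mean curvature) is missing, and the sign of the $\langle\alpha_H,du\rangle^2$ term is reversed; without the $S$-term you cannot produce the correct $Q$. Second, $Q({\bf m})$ is not ``homogeneous quadratic in $m$'': it is a quartic polynomial in ${\bf m}$ (the paper's $a_1,a_2,a_3$ contain $m_i^4$ and $m_i^2m_j^2$ terms), and the instability threshold $(1+\sum_l r_l^2)^{1/2}r_i<r_jr_k$ does not come from a sign pattern in a quadratic form but from the single family of modes $\widetilde{\bf m}=(-1,1,1,0,\ldots,0)$, for which $Q=4/(s_2s_3)-4\tanh^2r/s_1$; every other mode with $a_3({\bf m})=0$ must be shown nonnegative separately (the paper does this in Lemmas 4.2--4.5), and the modes with $a_3\neq 0$ require the genuinely nontrivial case analysis of Theorems 4.7 and 4.8 (the $n=2$ case is several pages of completing squares, not a quick AM--GM). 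Also note that the paper avoids computing directly in the Bergman metric: it exploits that the orbit lies in an $\eta$-umbilical geodesic sphere and that $\Phi$ is a $K$-equivariant symplectomorphism, so the hyperbolic second variation equals the Euclidean one plus correction terms involving only the Reeb direction (Theorem 3.5); your direct computation is possible in principle but you have not produced the formula, and the cross terms you mention are exactly the rank-one perturbation in the Reeb direction that this reduction isolates cleanly.

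The most serious gap is in part (c). The Chekanov/Iriyeh--Ono deformation does not produce a torus in a ``small Euclidean neighborhood'' of $T(r_1,\ldots,r_n)$: it produces a Hamiltonian isotopic torus $T(r_1,\ldots,r_j',\ldots,r_n)$ with $r_j'<r_j$, which lies on a \emph{different} sphere $S^{2n-1}(\rho')$, where the ratio of the hyperbolic to the Euclidean volume form is a genuinely different constant ($\cosh\rho'$ versus $\cosh\rho$ with $\sinh^2\rho=\sum_i r_i^2$). So ``nearly constant ratio on a small neighborhood'' does not apply, and even if it did, an approximate constancy would not by itself convert a strict Euclidean volume drop into a strict hyperbolic one without quantitative control. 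The correct argument is elementary once you have the exact formula ${\rm Vol}_{g_1}(\Phi^{-1}(T(r_1,\ldots,r_n)))=(2\pi)^n\big(1+\sum_i r_i^2\big)^{1/2}\prod_i r_i$, which is strictly increasing in each $r_i$; decreasing one radius therefore strictly decreases the hyperbolic volume, and since $\Phi^{-1}$ carries Hamiltonian isotopies of $\mathbb{C}^n$ to Hamiltonian isotopies of $\mathbb{C}H^n$, the Iriyeh--Ono conclusion transfers. Your worry about the degeneration of the volume form at the boundary of $B^n$ is a red herring: no localization is needed, only this closed-form monotonicity.
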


 See also Proposition \ref{p3}, Theorem \ref{cliff}, \ref{n2} and \ref{nvol}  for more precise statement. Although almost all Lagrangian $T^n$-orbits are not Hamiltonian volume minimizing when $n\geq 3$, the Hamiltonian volume minimizing property of the monotone $T^n$-orbit in $\mathbb{C}H^n$ is still an open problem as well as the case of $\mathbb{C}^n$ and $\mathbb{C}P^n$ (See Section 4 for further discussion).

In general, the second variational formula of the volume functional for non-minimal, H-minimal Lagrangian submanifold $L$ under Hamiltonian deformation is described by a linear elliptic differential operator of 4th order depending on both intrinsic and extrinsic properties of the immersion, and the analysis of the operator is much difficult than the case of minimal Lagrangian  (See \cite{Oh2} or Section 2).  For the case of torus orbit in a compact toric K\"ahler manifold, Ono described the operator  by using a K\"ahler potential on a complex coordinate of the toric manifold \cite{Ono}. On the other hand, our computation method in the present paper is slightly different from \cite{Ono}. We use geometry of $\mathbb{C}H^n$, in particular, the  $K$-equivariant global symplectic diffeomorphism from $\mathbb{C}H^n$ to $\mathbb{C}^n$. This map  makes it possible to rewrite the second variation for a class of Lagrangian submanifolds in $\mathbb{C}H^n$ in terms of the corresponding geometry of $\mathbb{C}^n$ (Theorem \ref{keyprop}), so that the calculation of several geometric quantities are much easier than a direct computation by using the hyperbolic metric. We remark that, in principle, our formula can be applied to not only torus orbits, but also any compact homogeneous Lagrangian submanifold in $\mathbb{C}H^n$.  Finally, we apply the results to the torus orbits in $\mathbb{C}H^n$ and give a proof of Theorem \ref{maintheorem}.

\section{Preliminaries}\label{sec2}

In this section, we give a general description of Lagrangian submanifold with $S^1$-symmetry in a K\"ahler manifold. 

Let $M$ be a complex $n$-dimensional K\"ahler manifold with the K\"ahler structure $(\omega, J)$, where  $\omega$ is the K\"ahler form and $J$ is the complex structure, and  $\phi: L\rightarrow M$ a Lagrangian immersion of a real $n$-dimensional manifold $L$ into $M$, that is, an immersion of $L$ satisfying $\phi^*\omega=0$. We denote the compatible Riemannian metric by $g$, i.e. $g(\cdot ,\cdot)=\omega(\cdot, J\cdot)$, and we often use the same symbol $g$ for the induced metric. 

Suppose a 1-dimensional connected  subgroup $Z\subset {\rm Aut}(M, \omega, J)$ acts properly on $M$ in a Hamiltonian way, and we denote the moment map of the action by $\mu: M\rightarrow \mathbb{R}\simeq \mathfrak{z}^*$, where $\mathfrak{z}$ is the Lie algebra of $Z$. We take $c\in \mathbb{R}$ and consider the level set $\mu^{-1}(c)$. In the following, we always assume $c\in \mathbb{R}$ is a regular value for $\mu$ so that $\mu^{-1}(c)$ is a real hypersurface in $M$. Since $Z$ is abelian, one easily check that $Z$ acts on $\mu^{-1}(c)$. We denote the immersion by $\iota: \mu^{-1}(c)\rightarrow M$.   

Take a non-zero element $v\in \mathfrak{z}$ and define $\tilde{v}_p:=(d/dt)|_{t=0} {\rm exp}tv\cdot p$ the fundamental vector field of the $Z$-action at $p\in \mu^{-1}(c)$. Set $\mathfrak{z}_p:={\rm span}_{\mathbb{R}}\{\tilde{v}_p\}$. Then, the tangent space of $\mu^{-1}(c)$ is decomposed into
\begin{align}\label{dec}
T_p\mu^{-1}(c)=E_p\oplus \mathfrak{z}_p,
\end{align}
where $E_p$ is the orthogonal complement of $\mathfrak{z}_p$ in $T_p\mu^{-1}(c)$. Note that $E_p$ is a $J$-invariant subspace in $T_pM$. Moreover, we see $J\tilde{v}_p$ is a normal direction of $\mu^{-1}(c)$ in $M$. In fact, we have
\begin{align*}
g(J\tilde{v}_p, X)=\omega(\tilde{v}_p, X)=d\mu^{v}_p(X)=0
\end{align*}
for any $X\in \Gamma(T\mu^{-1}(c))$ since the $Z$-action is Hamiltonian. We set
\begin{align*}
\xi_p:=\frac{\tilde{v}_p}{|\tilde{v}_p|_g}\quad{\rm and}\quad N_p:=J{\xi}_p.
\end{align*}
 The unit vector field $\xi_p$ will be called  {\it Reeb vector field} on $\mu^{-1}(c)$, and $N$ defines a unit normal vector field on $\mu^{-1}(c)$ in $M$. Also, we define a $1$-form on $\mu^{-1}(c)$ by $\eta:=\iota^*\{g(\xi, \cdot)\}=\iota^*\{g(-JN, \cdot)\}$ so that $E_p={\rm Ker}\eta_p$.
 
It is known that if the Lagrangian immersion $\phi$ is $Z$-invariant, then there exists $c\in \mathbb{R}\simeq \mathfrak{\mathfrak{z}^*}$ so that $\phi(L)\subset \mu^{-1}(c)$. Thus, for the $Z$-invariant Lagrangian immersion $\phi:L\rightarrow \mu^{-1}(c)\subset M$, we have an orthogonal decomposition
 \begin{align*}
T_pL=E_p^l\oplus \mathfrak{z}_p\subset T_p\mu^{-1}(c),
\end{align*}
where $E_p^l$ is the orthogonal complement of $\mathfrak{z}_p$ in $T_pL$.
Note that $E_p=E_p^l\oplus JE_p^l$ since $L$ is Lagrangian. According to this decomposition, we denote the tangent vector $X\in T_pL$ by
\begin{align*}
X=X_E+\eta(X)\xi.
\end{align*}
 
Suppose $Z$ acts on $\mu^{-1}(c)$ freely. Then, the quotient space $M_c:=\mu^{-1}(c)/Z$ is a smooth manifold and the standard K\"ahler reduction procedure yields a K\"ahler structure $(\omega_c, J_c)$ on $M_c$  so that $\pi^*\omega_c=\iota^*\omega$ and $\pi_*J=J_c\circ \pi_*$, where $\pi: \mu^{-1}(c)\rightarrow M_c$ is the projection. Note that $\pi$ is a Riemannian submersion and $\pi_*|_{E_p}: E_p\xrightarrow{\sim} T_{\pi(p)}M_c$ is an isomorphism. In particular, the Levi-Civita connections $\widetilde{\nabla}$ of $(\mu^{-1}(c),g)$ and $\overline{\nabla}^c$ of $(M_c,g_c)$ are related as $\pi_*(\widetilde{\nabla}_XY)=\overline{\nabla}^c_{\pi_*X}\pi_*Y$ for any $X,Y\in \Gamma(E)$.
See \cite{Futaki} for details of K\"ahler reduction.

We denote the  shape operator of the immersion $\iota: \mu^{-1}(c)\rightarrow M$ by $\widetilde{A}: \Gamma(T\mu^{-1}(c))\rightarrow \Gamma(T\mu^{-1}(c))$, i.e., $\widetilde{A}(X):=-(\overline{\nabla}_XN)^{\top}$, where $\overline{\nabla}$ is the Levi-Civita connection on $TM$, and $\top$ means the orthogonal projection onto $T\mu^{-1}(c)$. In the present paper, we are interested in a special class of hypersurfaces so called {\it $\eta$-umbilical hypersurfaces}. Namely, we suppose the shape operator of the immersion $\iota: \mu^{-1}(c)\to M$ satisfies
\begin{align}\label{etaumb}
\widetilde{A}(X)=aX+b\eta(X)\xi.
\end{align}
for some constants $a,b\in \mathbb{R}$. Note that $a$ and $a+b$ are eigenvalues of $\widetilde{A}$ and $\xi$ gives a eigenvector for the eigenvalue $a+b$.  In this case, we have the following simple fact:
Denote the holomorphic sectional curvature tensors of $M$ and $M_c$ by $T$ and $T_c$, respectively. Then, by the result of S. Kobayashi \cite{Koba},  we have
\begin{align*}
T_c(\pi_*X)=T(X)+4g(\widetilde{A}(X),X)^2=T(X)+4a^2
\end{align*}
for any $X\in E_p$ with $|X|=1$. In particular, if $M$ is a complex space form, then the quotient space $M_c$ of the $\eta$-umbilical hypersurface also has constant holomorphic sectional curvature. Thus, if furthermore $M_c$ is simply-connected, then $M_c$ is a complex space form again.  We exhibit the concrete examples of $\eta$-umbilical hypersurafaces in complex space forms and these K\"ahler quotient spaces  in Tabel 1. We refer to \cite{Berndt}, \cite{BV}  and references therein for  details. 
\begin{table}[htb]
\center{
  \begin{tabular}{|c|c|c|c|c|}\hline
$M$ & $\mu^{-1}(c)$  & $Z$ & $a,b$ & $M_c=\mu^{-1}(c)/Z$ \\ \hline
$\mathbb{C}^n$ & 
\begin{tabular}{l}
hypersphere \\ of radius $r$
\end{tabular}
 & $S^1$ & 
\begin{tabular}{l}
 $a=1/r$ \\ $b=0$
 \end{tabular} & $\mathbb{C}P^{n-1}(4/r^2)$ \\ \hline
$\mathbb{C}P^n(4)$ &
\begin{tabular}{l}
geodesic hypersphere \\ of radius $r$
\end{tabular}
& $S^1$  & 
\begin{tabular}{l}
$a=\cot(r)$\\ $b=-\tan(r)$ 
\end{tabular}
& $\mathbb{C}P^{n-1}(4/\sin^2r)$\\ \hline
 & 
 \begin{tabular}{l}
 geodesic hypersphere \\ of radius $r$
 \end{tabular} 
  &$S^1$ 
  &
   \begin{tabular}{l}
  $a=\coth(r)$\\ $b=\tanh(r)$
  \end{tabular}
  & $\mathbb{C}P^{n-1}(4/\sinh^2r)$  \\ \cline{2-5}
$\mathbb{C}H^n(-4)$ &horosphere & $\mathbb{R}$ & $a=1, b=1$  & $\mathbb{C}^{n-1}$ \\ \cline{2-5}
 & 
  \begin{tabular}{l}
 tube of radius $r$\\ around $\mathbb{C}H^{n-1}(-4)$
 \end{tabular}
 & $S^1$ &
 \begin{tabular}{l}
 $a=\tanh(r)$\\ $b=\coth(r)$ 
  \end{tabular}
 & $\mathbb{C}H^{n-1}(-4/\cosh^2r)$
 \\ \hline
 \end{tabular}
 \caption {$\eta$-umbilical hypersurfaces in complex space forms.}
}
\end{table}

Suppose  a $Z$-invariant Lagrangian immersion $\phi: L\rightarrow M$ is contained in an $\eta$-umbilical hypersurface $\mu^{-1}(c)$.
Denote the second fundamental form of the immersions $\phi: L\rightarrow M$ and ${\phi}': L\rightarrow \mu^{-1}(c)$  by $B$ and $B'$, respectively. Also, we define the mean curvature vectors of these immersions by $H:={\rm tr}B$ and $H':={\rm tr}B'$, respectively. A direct computation shows that
\begin{align}\label{bnl}
B(X,Y)&=(\overline{\nabla}_XY)^{\perp}%=\{\nabla^N_XY+\tilde{B}(X,Y)\}^{\perp}
=B'(X,Y)+\widetilde{B}(X,Y)
\end{align}
for any $X,Y\in \Gamma(TL)$, where $\widetilde{B}$ is the second fundamental form of $\iota: \mu^{-1}(c)\to M$. Therefore, we obtain from \eqref{etaumb} and \eqref{bnl}
\begin{align}\label{mcl}
H=H'+(an+b)J\xi.
\end{align}
Note that $H'\in JE_p^l$ and $J\xi=N$. 

We often use the following $(0,3)$-tensor field on $L$:
\begin{align*}
S(X,Y,W):=g(B(X,Y),JW)\quad {\rm for}\quad {X,Y,W}\in \Gamma(TL).
\end{align*}
We remark that the sign is different from \cite{Oh2} for the definition of $S$. It is easy to see that $S$ is symmetric for all three components by the K\"ahler condition. Since we assume $L$ is Lagrangian, $S$ and $B$ have the same information. The following lemma will be used in the next section:
 
\begin{lemma}\label{sform}
Suppose the $Z$-invariant Lagrangian submanifold $L$ is contained in an $\eta$-umbilical hypersurface $\mu^{-1}(c)$. For 
any $X\in T_pL$, we have
 \begin{align}\label{sform1}
 S(X,X, JH)&=S(X_E, X_E, JH')+2a\cdot \eta(X)g(X_E,JH')\\
&\quad-(an+b)\{a|X|^2+b\eta(X)^2\}.\nonumber
 \end{align}
\end{lemma}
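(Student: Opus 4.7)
The plan is to expand $S(X,X,JH)$ by trilinearity and symmetry, using the two decompositions already established in the section: the decomposition $H = H' + (an+b)J\xi$ from \eqref{mcl} for the normal direction, and the decomposition $X = X_E + \eta(X)\xi$ for the tangent direction, together with the fine structure of the second fundamental form coming from \eqref{bnl} and the $\eta$-umbilical condition \eqref{etaumb}.

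The first key observation I would exploit is that $B'(X,Y) \in JE^l_p$ for all $X,Y \in T_pL$. Indeed $B'$ is normal to $L$ in $\mu^{-1}(c)$, hence lies in $T\mu^{-1}(c)\cap JTL = (E^l\oplus JE^l\oplus \mathbb{R}\xi)\cap(JE^l\oplus \mathbb{R}N) = JE^l$. In particular $g(B'(X,Y),N)=0$. Combined with the identity $\widetilde{B}(X,Y) = (ag(X,Y)+b\eta(X)\eta(Y))N$ that encodes \eqref{etaumb}, this immediately yields
\begin{equation*}
S(X,X,\xi) = g(B(X,X),J\xi) = g(\widetilde{B}(X,X),N) = a|X|^2 + b\eta(X)^2.
\end{equation*}

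Next I would use $JH = JH' - (an+b)\xi$ (from \eqref{mcl} and $J^2=-\mathrm{id}$) to split
\begin{equation*}
S(X,X,JH) = S(X,X,JH') - (an+b)\bigl(a|X|^2+b\eta(X)^2\bigr),
\end{equation*}
reducing the claim to analyzing $S(X,X,JH')$. Writing $X=X_E+\eta(X)\xi$ and using trilinearity and the full symmetry of $S$, this expands as
\begin{equation*}
S(X,X,JH') = S(X_E,X_E,JH') + 2\eta(X)S(X_E,\xi,JH') + \eta(X)^2 S(\xi,\xi,JH').
\end{equation*}
For the last term, symmetry gives $S(\xi,\xi,JH') = g(B(\xi,JH'),N)$; the $B'$ piece dies by the observation above, and the $\widetilde{B}$ piece vanishes since both $g(\xi,JH')=0$ and $\eta(JH')=0$ (because $JH'\in E^l$). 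For the cross term, symmetry gives $S(X_E,\xi,JH') = g(B(X_E,JH'),N)$, and now the same dichotomy leaves only the $\widetilde{B}$ contribution, which equals $a\,g(X_E,JH')$ since $\eta(X_E)=0$.

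Substituting these back produces the asserted identity. There is no real analytic obstacle here; the entire proof is structural bookkeeping. The only point requiring genuine care is the sign and placement of $J$ (i.e., correctly recognizing $JH\in TL$ while $J(JH)=-H\in NL$) and keeping straight that $S$ is linear in its third slot while $B$ feeds into the first two. Once the pointwise orthogonality $B'\perp N$ is isolated, all the remaining terms collapse mechanically to the stated formula.
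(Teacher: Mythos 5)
Your proof is correct and follows essentially the same route as the paper: decompose $JH=JH'-(an+b)\xi$ via \eqref{mcl}, split $X=X_E+\eta(X)\xi$, and evaluate the resulting $\xi$-contractions using the symmetry of $S$ and the $\eta$-umbilical condition. The only cosmetic difference is that the paper packages the latter step as the single identity $S(X,Y,\xi)=ag(X,Y)+b\eta(X)\eta(Y)$, obtained from the Weingarten formula for $\overline{\nabla}_X N$, whereas you obtain the same values by writing $B=B'+\widetilde{B}$ as in \eqref{bnl} and noting $g(B'(X,Y),N)=0$; the two computations are equivalent.
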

\begin{proof}
By using \eqref{etaumb} and the K\"ahler condition, we note that
\begin{align*}
S(X,Y,\xi)&=S(X,\xi,Y)=g(\overline{\nabla}_X\xi, JY)=-g(\overline{\nabla}_XN,Y)\\
&=g(\widetilde{A}(X),Y)=ag(X,Y)+b\eta(X)\eta(Y)
\end{align*}
for $X,Y\in T_pL\subset T_p\mu^{-1}(c)$. In particular, we have
\begin{align*}
S(X_E, Y, \xi)=ag(X_E, Y)\quad{\rm and}\quad  S(X_E, \xi,\xi)=0.
\end{align*}
Combining this with \eqref{mcl}, we see
\begin{align*}
S(X,X,JH)&=S(X,X, JH')-(an+b)S(X,X, \xi)\\
&=S(X_E, X_E, JH')+2\eta(X)\cdot ag(X_E, JH')-(an+b)\{a|X|^2+b\eta(X)^2\}.
\end{align*}
This proves \eqref{sform1}.
\end{proof}

Recall that an infinitesimal deformation $\phi_s: L\times (-\epsilon, \epsilon)\rightarrow M$ of a Lagrangian immersion $\phi_0=\phi: L\rightarrow M$ into a (almost) K\"ahler manifold $(M,\omega, J, g)$ is called {\it Hamiltonian} if the variational vector field $V:=d\phi_s/ds|_{s=0} $ is a Hamiltonian vector field, i.e., there exists $u\in C^{\infty}(L)$ so that $\alpha_{V}:=\phi^{*}i_{V}\omega=du$. A Lagrangian immersion $\phi$ is {\it Hamiltonian-minimal} ({\it H-minimal} for short) if $d/ds|_{s=0}{\rm Vol}_g(\phi_s)=0$ for any Hamiltonian deformation $\phi_s$ of $\phi=\phi_0$, where ${\rm Vol}_g(\phi)$ is the volume of $\phi$ measured by the volume measure $dv_g$ of $g$. Moreover, an H-minimal Lagrangian is {\it Hamiltonian-stable} ({\it H-stable} for short) if $d^2/ds^2|_{s=0}{\rm Vol}_g(\phi_s)\geq 0$ for any Hamiltonian deformation $\phi_s$.  

By the result of Oh \cite{Oh2},  the H-minimality is equivalent to ${\rm div}_{g}(JH)=0$.  A typical example of H-minimal Lagrangian submanifold is obtained by a compact group action. Namely, if a compact connected Lie subgoup $G\subset {\rm Aut}(M,\omega, J)$ admits a Lagrangian orbit $G\cdot p$ for some $p\in M$, then $G\cdot p$ is always H-minimal by the divergence theorem (cf. \cite{AO}). 

For an H-minimal Lagrangian submanifold in a K\"ahler manifold, Oh proved the following  second variational formula under the Hamiltonian deformation $\phi_s$:
\begin{align}\label{svf}
\frac{d^2}{ds^2}\Big|_{s=0}{\rm Vol}_g(\phi_s)=\int_L|\Delta u|^2-\rho(\nabla u, J\nabla u)+2S(\nabla u, \nabla u, JH)+JH(u)^2dv_g,
\end{align}
where $u$ is the Hamiltonian function of the variational vector field $V$ and $\rho$ is the Ricci form of $M$ (Recall that the sign of $S$ is different from \cite{Oh2}). In the following sections, we consider the second variation \eqref{svf} in a specific situation.

\section{Lagrangian submanifolds  in $\mathbb{C}H^n$}\label{sec3}
 In this section, we consider a Lagrangian submanifold contained in a special case of $\eta$-umbilical hypersurface in the complex hyperbolic space $\mathbb{C}H^n$. The main purpose of this section is to prove Theorem \ref{keyprop}.

\subsection{Geometry of $\mathbb{C}H^n$}
Let $\mathbb{C}^n$ be the  complex Euclidean space equipped with the standard K\"ahler structure $(\omega_{0}=\frac{\sqrt{-1}}{2}\sum_{i=1}^ndz^i\wedge d\overline{z}^i, J_{0}, g_{0})$. Also, we  denote the standard Hermitian inner product and its norm on $\mathbb{C}^n$ by $\langle,\rangle$ and $|\cdot|$, respectively.

Let $\mathbb{C}^{1,n}$ be the complex Euclidean space $\mathbb{C}^{1+n}$ with the Hermitian inner product $\langle,\rangle'$ of signature $(1,n)$ and $P(\mathbb{C}^{1,n})$ is the projective space. The {\it complex hyperbolic space} $\mathbb{C}H^n$ is defined by 
\begin{equation*}
\mathbb{C}H^n:=\{[l]\in P(\mathbb{C}^{1,n}); l={\rm span}_{\mathbb{C}}\{z\}\ {\rm with}\ \langle z, z\rangle' <0\}.
\end{equation*}
In the present paper, we use the ball model for $\mathbb{C}H^n$, namely, we identify $\mathbb{C}H^n$ with the open unit ball
\begin{align*}
B^{n}:=\{z\in \mathbb{C}^n; |z|<1\}\subset \mathbb{C}^n
\end{align*}
by the map 
\begin{align}\label{bide}
B^{n}\ni z\mapsto [1: z]\in \mathbb{C}H^n\subset P(\mathbb{C}^{1,n}).
\end{align}
The standard complex structure $J_{0}$ on $\mathbb{C}^n$ defines the complex structure $J$ on $B^{n}$. Moreover, the standard K\"ahler form on $B^{n}$ (or $\mathbb{C}H^n$) is defined by
\begin{align}\label{om}
\omega&=\frac{-1}{2}\sqrt{-1}\partial\overline{\partial}\log(1-|z|^2)\\
&=\frac{1}{2}\frac{\sqrt{-1}}{(1-|z|^2)^2}\Big{\{}\Big(\sum_{i=1}^n \overline{z}^idz^i\Big)\wedge\Big(\sum_{j=1}^n z^jd\overline{z}^j\Big)+(1-|z|^2)\sum_{i=1}^n dz^i\wedge d\overline{z}^i\Big{\}}.\nonumber
\end{align}
Then, the holomorphic sectional curvature of $(B^n,\omega, J)$ is negative constant which is equal to $-4$. See \cite{G} for details.
We denote the compatible K\"ahler metric on $B^{n}$ by $g$. 
%Note that we have $g=g_{i\overline{j}}(dz^i\otimes d\overline{z}^j+d\overline{z}^j\otimes dz^i)$ with $\omega=\sqrt{-1}g_{i\overline{j}}dz^i\wedge dz^j$. 

Recall that $SU(1,n)$ acts on $B^{n}$ through the map \eqref{bide}, where $SU(1,n)$ naturally acts on $\mathbb{C}^{1,n}$ and $P(\mathbb{C}^{1,n})$. Moreover, the action is transitive, and the stabilizer group at $z=0$ is given by $K=S(U(1)\times U(n))$. In particular, $\mathbb{C}H^n$ is identified with $G/K= SU(1,n)/S(U(1)\times U(n))$. Note that the stabilizer subgroup $K$ is a maximal compact subgroup of $SU(1,n)$, and it acts on $B^{n}$ by
\begin{align*}
k\cdot z=w^{-1}A z\quad {\rm for}\quad k:=
\left[
\begin{array}{c|ccc}
w & & &  \\ \hline
 & & & \\
 & & A & \\
& & & 
\end{array}
\right]\in K\ {\rm and}\ z\in B^{n}.
\end{align*}
Moreover, $K$ acts on the tangent space $T_0B^n$ by the isotropy representation $K\rightarrow U(n)$. By \eqref{om}, we see $(T_0B^n, \omega_0)$ is naturally identified with the standard symplectic vector space $(\mathbb{C}^n,\omega_0)$. Thus, $K$ acts on $\mathbb{C}^n$ by this identification.

A  principal $K$-orbit in $B^n$ coincides with a hypersphere $S^{2n-1}(R):=\{z\in B^{n}; |z|=R\}$ in $B^{n}$ of radius $R\in (0,1)$. On the other hand, one can check that the geodesic distance $r:=d(0,z)$ between $0$ and $z\in S^{2n-1}(R)$ with respect to the hyperbolic metric \eqref{om} is given by
\begin{align*}
r=d(0,z)=\tanh^{-1}(R)
\end{align*}
(See Section 3.1.7 in \cite{G} for instance.  Note that the holomorphic sectional curvature of $\mathbb{C}H^n$ is equal to $-1$ in Section 3.1.7 in \cite{G}, although we assume it is equal to $-4$).
In particular, $S^{2n-1}(R)$ is a geodesic hypersphere in $B^{n}$ of geodesic radius $r=\tanh^{-1}(R)$. Therefore, we denote the geodesic hypersphere  of geodesic radius $r\in (0,\infty)$ in $B^{n}$ by
\begin{align*}
S^{2n-1}_r:=S^{2n-1}(\tanh r)=\{z\in B^{n}; |z|=\tanh r\}.
\end{align*}
Note that the geodesic hyperspheres in $B^n$ of different radii are {\it not homothetic} to each other with respect to the induced metrics from $g$, and they are so called the {\it Berger spheres}.

Let us consider the symplectic structure of $\mathbb{C}H^n$. It is known that any Hermitian symmetric space of non-compact type is symplectic diffeomorphic to the symplectic vector space (cf. \cite{Mc}). For the case of $\mathbb{C}H^n$,  we have the following explicit identification (cf. \cite{G}, \cite{HK}):
\begin{lemma}
A map defined by 
\begin{align}\label{sympdiff}
\Phi: B^{n} \rightarrow \mathbb{C}^n,\quad z\mapsto \sqrt{\frac{1}{1-|z|^2}}\cdot z, 
\end{align}
gives a $K$-equivariant symplectic diffeomorphism, i.e. $\Phi^*\omega_{0}=\omega$. 
\end{lemma}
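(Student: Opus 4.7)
The plan is to verify the three assertions—diffeomorphism, $K$-equivariance, and $\Phi^*\omega_0=\omega$—separately, with only the symplectic identity requiring genuine work. For the diffeomorphism property, I would exhibit the inverse $\Psi\colon\mathbb{C}^n\to B^n$ defined by $\Psi(w):=w/\sqrt{1+|w|^2}$; the identity $|\Psi(w)|^2=|w|^2/(1+|w|^2)<1$ places $\Psi$ into $B^n$, and $\Phi\circ\Psi=\mathrm{id}_{\mathbb{C}^n}$, $\Psi\circ\Phi=\mathrm{id}_{B^n}$ follow by direct substitution. For $K$-equivariance, any $k\in K$ may be written as $\mathrm{diag}(w,A)$ with $w\in U(1)$ and $A\in U(n)$, and unitarity yields $|k\cdot z|=|w^{-1}Az|=|z|$; hence $\Phi(k\cdot z)=(w^{-1}Az)/\sqrt{1-|z|^2}=w^{-1}A\,\Phi(z)$, which coincides with the linear isotropy $K$-action on $\mathbb{C}^n\simeq T_0B^n$ described after \eqref{bide}.

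The core of the argument is the symplectic identity. The obvious first attempt—using K\"ahler potentials to write $\Phi^*\omega_0=\tfrac{\sqrt{-1}}{2}\partial\bar\partial(|\Phi(z)|^2)=\tfrac{\sqrt{-1}}{2}\partial\bar\partial\bigl(|z|^2/(1-|z|^2)\bigr)$ and compare with $\omega=-\tfrac{\sqrt{-1}}{2}\partial\bar\partial\log(1-|z|^2)$—fails, because $\Phi$ is only a smooth diffeomorphism and not biholomorphic: one checks $\partial\Phi^i/\partial\bar z^j\ne0$, so $\Phi^*$ commutes only with $d$ and not with $\partial$ or $\bar\partial$ individually. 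The main obstacle is precisely recognizing this pitfall and instead carrying out the computation at the level of $d$, via $\Phi^*\omega_0=\tfrac{\sqrt{-1}}{2}\sum_i dw^i\wedge d\bar w^i$ with $w^i=\Phi^i(z)$.

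To execute the direct calculation, set $\rho:=1-|z|^2$, $\alpha:=\sum_j\bar z^j dz^j$ and $\beta:=\sum_j z^j d\bar z^j$. Using $d\rho=-(\alpha+\beta)$ and hence $d(\rho^{-1/2})=\tfrac{1}{2}\rho^{-3/2}(\alpha+\beta)$, differentiation of $w^i=z^i\rho^{-1/2}$ gives
\begin{equation*}
dw^i=\frac{2\rho\, dz^i+z^i(\alpha+\beta)}{2\rho^{3/2}},\qquad d\bar w^i=\frac{2\rho\, d\bar z^i+\bar z^i(\alpha+\beta)}{2\rho^{3/2}}.
\end{equation*}
In the sum $\sum_i dw^i\wedge d\bar w^i$, the term quadratic in $(\alpha+\beta)$ vanishes since any single $1$-form wedged with itself is zero; and the two cross contributions reduce, using $\sum_i\bar z^i dz^i=\alpha$, $\sum_i z^i d\bar z^i=\beta$, together with $\alpha\wedge\alpha=\beta\wedge\beta=0$, to a pair of identical copies of $\alpha\wedge\beta$. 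Combining these observations yields
\begin{equation*}
\sum_{i=1}^n dw^i\wedge d\bar w^i=\frac{\rho\sum_{i=1}^n dz^i\wedge d\bar z^i+\alpha\wedge\beta}{\rho^2},
\end{equation*}
so that $\Phi^*\omega_0=\tfrac{\sqrt{-1}}{2}\sum_i dw^i\wedge d\bar w^i$ matches the explicit formula for $\omega$ displayed in \eqref{om}, completing the proof.
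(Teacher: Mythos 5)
Your proof is correct, but it takes a genuinely different route from the paper's on the one substantive point, the identity $\Phi^*\omega_0=\omega$. The paper exploits the $K$-equivariance it has just established: since $\omega_0$ and $\omega$ are both $K$-invariant and the segment $\{(0,\ldots,0,R): R\in[0,1)\}$ is a section of the cohomogeneity-one $K$-action, it suffices to verify the identity at the single point ${\bf r}=(0,\ldots,0,R)$, where $\Phi_*$ is diagonal in the real coordinates $x^i,y^i$ and the check is a two-line comparison of $\sum dx^i\wedge dy^i$ with the restriction of \eqref{om}. You instead compute $\Phi^*\omega_0$ globally by differentiating $w^i=z^i(1-|z|^2)^{-1/2}$ and organizing the result through the forms $\alpha=\sum\bar z^j dz^j$, $\beta=\sum z^j d\bar z^j$; I have checked that the cross terms do collapse to $2\rho\,\alpha\wedge\beta$ and that the resulting expression reproduces \eqref{om} exactly. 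Your approach buys a coordinate-free verification valid at every point without needing the reduction-to-a-section argument (and your observation that one cannot simply compare K\"ahler potentials because $\Phi$ is not holomorphic is a worthwhile caution the paper does not make explicit); the paper's approach buys a much shorter computation at the price of invoking equivariance and the existence of a section. Both are complete proofs.
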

\begin{proof}
By the definition of $K$-actions and $\Phi$, it is easy to verify that $\Phi$ is $K$-equivariant diffeomorphism. On the other hand, a section of the cohomogeneity one $K$-action on $B^n$  is given by $\{(0,\ldots,0, R)\in B^n; R\in [0,1)\}$. Thus, in order to prove the second assertion, it is sufficient to check at a point ${\bf r}:=(0,\ldots, 0,R)\in B^n$ for $R\in [0,1)$. Note that, at the point ${\bf r}$, we have
\begin{align*}
\Phi_*\frac{\partial}{\partial x^i}\Big{|}_{{\bf r}}&=\sqrt{\frac{1}{1-R^2}}\frac{\partial}{\partial x^i}\Big{|}_{\Phi({\bf r})}\quad {\rm for}\  i\neq n, \quad 
\Phi_*\frac{\partial}{\partial x^n}\Big{|}_{{\bf r}}=\Big(\frac{1}{1-R^2}\Big)^{3/2}\frac{\partial}{\partial x^n}\Big{|}_{\Phi({\bf r})},\\
\Phi_*\frac{\partial}{\partial y^i}\Big{|}_{{\bf r}}&=\sqrt{\frac{1}{1-R^2}}\frac{\partial}{\partial y^i}\Big{|}_{\Phi({\bf r})}\quad {\rm for}\ i=1,\ldots,n,
\end{align*}
where we set $z^i=x^i+\sqrt{-1}y^i$.
On the other hand, we have $(\omega_0)_{\Phi(\bf r)}=\sum_{i=1}^ndx^i\wedge dy^i$ and 
\begin{align*}
\omega_{\bf r}=\frac{1}{1-R^2}\sum_{j=1}^{n-1} dx^j\wedge dy^j+\frac{1}{(1-R^2)^2}dx^n\wedge dy^n.
\end{align*}
By using these equalities, one can easily check that $\Phi^*\omega_0=\omega$.
\end{proof}

 In the following, we identify $B^{n}$ (or $\mathbb{C}H^n$) with $\mathbb{C}^n$ as a symplectic manifold by $\Phi$.

Let us consider the $C(K)$-action on $B^n$, where 
\begin{align*}
C(K):=\{{\rm diag}(e^{-n\sqrt{-1}\theta},e^{\sqrt{-1}\theta},\ldots, e^{\sqrt{-1}\theta}); \theta \in [0,2\pi]\}
\end{align*}
is the center of $K$. Note that $C(K)$ does not act effectively on $B^n$.  Indeed, $C(K)$ acts on $B^n$ by $z\mapsto e^{\sqrt{-1}(n+1)\theta}z$ for $z\in B^n$. In order to adapt the argument of the previous section,  we take a normal subgroup $N$ of $C(K)$:
\begin{align*}
N:=\{{\rm diag}(e^{-n\sqrt{-1}\frac{2\pi k}{n+1}},e^{\sqrt{-1}\frac{2\pi k}{n+1}},\ldots, e^{\sqrt{-1}\frac{2\pi k}{n+1}}); k=0,1,\cdots, n\}.
\end{align*}
Obviously, $N$ is isomorphic to $\mathbb{Z}_{n+1}$ and $Z:=C(K)/N$ is homeomorphic to $S^1$. Moreover, $Z$ acts  on $B^n$ effectively and freely through the $C(K)$-action. Indeed, $Z$ acts on $B^n$ by $z\mapsto e^{\sqrt{-1}\theta}z$ for $e^{\sqrt{-1}\theta}\in S^1$ via the identification $Z\simeq S^1$.

 Since $\Phi$ is a $K$-equivariant symplectic diffeomorphism, a moment map $\mu: B^{n} \rightarrow \mathbb{R}$ of the $Z$-action on ${B}^n$ is given by
 \begin{align*}
 \mu(z)=\mu_{0}\circ \Phi(z)=-\frac{1}{2}\Big(\frac{|z|^2}{1-|z|^2}\Big)
 \end{align*}
where $\mu_{0}: \mathbb{C}^n\rightarrow \mathbb{R}$ is a moment map of the $Z$-action on $\mathbb{C}^n$ which is given by $\mu_{0}(z):=-\frac{1}{2}|z|^2$. Thus, a regular level set $\mu^{-1}(c)$ coincides with a geodesic hypersphere $S^{2n-1}_r=S^{2n-1}(\tanh r)$ in $B^{n}$, that is, a $K$-orbit. 

 We fix a fundamental vector field of the $Z$-action on $B^{n}$ (or $\mathbb{C}^n$) defined by
\begin{align*}
\tilde{v}_z:=\sum_{i=1}^n \Big(-y^i\frac{\partial}{\partial x^i}+x^i\frac{\partial}{\partial y^i}\Big)=\sqrt{-1}\sum_{i=1}^n\Big(z^i\frac{\partial}{\partial z^i}-\overline{z}^i\frac{\partial}{\partial \overline{z}^i}\Big)
\end{align*}
for $z\in B^{n}$ (or $\mathbb{C}^n$) so that $J\tilde{v}_z=-{\bf p}$ is the inner position vector. On the other hand, a direct computation shows that  we have
\begin{align}\label{normv}
|\tilde{v}_z|_g=\frac{|z|}{1-|z|^2}=\frac{\tanh r}{1-\tanh^2r}={\sinh r\cosh r} 
\end{align}
for $z\in S^{2n-1}_r\subset B^n$. Note that the norm $|\tilde{v}_z|_g$ depends only on $r$, and this implies that  $Z$-orbits contained in $S^{2n-1}_r$ are mutually isometric. The Reeb vector field on $S^{2n-1}_{r}$ is given by
\begin{align*}
\xi_z:=\frac{\tilde{v}_z}{|\tilde{v}_z|_g}=\frac{\tilde{v}_z}{\sinh r\cosh r}\quad {\rm for}\quad z\in S^{2n-1}_r.\end{align*}
Note that $N:=J_{0}\xi$ is the {\it inner} unit normal vector field of $S^{2n-1}_r$.  Moreover, it is known that the shape operator $\widetilde{A}$ of  $S^{2n-1}_r\subset B^{n}$ with respect to $N$ is given by
\begin{align*}
\widetilde{A}(X)=\coth r\cdot X+\tanh r\cdot \eta(X)\xi, 
\end{align*}
namely, $S^{2n-1}_r$ is an $\eta$-umbilical hypersurface in $B^{n}$. 
In particular, the K\"ahler quotient space $\mu^{-1}(c)/Z$ is exactly the complex projective space $\mathbb{C}P^{n-1}(4/\sinh^2r)$ (see  Section 2).

\subsection{Comparison of $\mathbb{C}H^n$ with $\mathbb{C}^n$}
Let $\phi_1: L\rightarrow B^{n}$ be a $C(K)$-invariant Lagrangian embedding into $B^n$. Note that $L$ is $C(K)$-invariant if and only if so is $Z$-invariant, where $Z=C(K)/N$.  In this subsection, we shall compare geometric properties of $\phi_1$ with corresponding properties of the composition 
\begin{align*}
\phi_2:=\Phi\circ \phi_1: L\to \mathbb{C}^n.
\end{align*}
Note that $\phi_2$ is a $C(K)$-invariant Lagrangian embedding into $\mathbb{C}^n$ since $\Phi$ is a $C(K)$-equivariant symplectic diffeomorphism. 

Recall that the image $\phi_1(L)$ is contained in $\mu^{-1}(c)=S^{2n-1}_r$ for some $r\in (0,\infty)$ (see Section 2). 
On the other hand, we see the restriction map
\begin{align*}
\Phi|_{S^{2n-1}_r}: S^{2n-1}_r=S^{2n-1}(\tanh r)\xrightarrow{\sim} S^{2n-1}(\sinh r),\quad \Phi(z)=\cosh r\cdot z
\end{align*}
is a diffeomorphism, and $\phi_2(L)$ is contained in $S^{2n-1}(\sinh r)$. Namely, we have the following diagram:
\begin{align*}
  \begin{picture}(65, 70)(0,0)
  \put(9, 60){\hspace{-0.5cm}$(B^{n},\omega) \xrightarrow[\textup{symp. diffeo.}]{\Phi} (\mathbb{C}^n, \omega_0)$}
  \put(10, 45){$\cup \hspace{3.2cm} \cup$}
  \put(-50, 30){$L\quad \rightarrow\quad S^{2n-1}_r\hspace{0.2cm} \xrightarrow[\textup{diffeo.}]{\Phi}\hspace{0.2cm} S^{2n-1}(\sinh r)$}
  \put(-50,15){$\downarrow\hspace{1.4cm} \pi_1\downarrow$ \hspace{2.4cm} $\pi_2\downarrow$}
  \put(-60,0){$L/Z\ \rightarrow\quad \mathbb{C}P^{n-1}(\frac{4}{\sinh^2 r})\  =\  \mathbb{C}P^{n-1}(\frac{4}{\sinh^2 r})$}
  \end{picture}
\end{align*}
Here,  $\pi_1$ and $\pi_2$ are natural projections by the $Z$-actions on ${S^{2n-1}_r}$ and  ${S^{2n-1}}(\sinh r)$, respectively.
Note that  we have isomorphisms
\begin{align}\label{iso}
\Phi_*|_{E_z}: E_z\xrightarrow{\sim} E_{\Phi(z)}\quad {\rm and}\quad \Phi_*|_{\mathfrak{z}_z}: \mathfrak{z}_z\xrightarrow{\sim}\mathfrak{z}_{\Phi(z)}
\end{align}
for any $z\in \mu^{-1}(c)=S^{2n-1}_r$ since $\Phi$ is $C(K)$-equivariant, where $E$ and $\mathfrak{z}$ are  defined by \eqref{dec}.  Moreover, the Reeb vector fields and  the inner unit normal vector fields of the hypersurfaces $S^{2n-1}_r\subset B^{n}$ and $S^{2n-1}(\sinh r)\subset \mathbb{C}^n$ are given by
 \begin{align}\label{xi}
&\xi_1(z):=\frac{\tilde{v}_z}{|\tilde{v}_z|_g}=\frac{\tilde{v}_z}{\sinh r\cosh r}\quad {\rm and}\quad \xi_2(\Phi(z)):=\frac{\tilde{v}_{\Phi(z)}}{|\tilde{v}_{\Phi(z)}|_{g_{0}}}=\frac{\Phi_*\tilde{v}_{z}}{\sinh r},\\
&N_1:=J\xi_1 \quad {\rm and}\quad N_2:=J_{0}\xi_2,\nonumber
\end{align}
respectively. Also, we define 1-forms $\eta_1:=g(\xi_1,\cdot)|_{S^{2n-1}_r}$ and $\eta_2:=g_{0}(\xi_2,\cdot)|_{S^{2n-1}(\sinh r)}$.

Let us  consider the induced metrics on $L$ 
\begin{align*}
g_1:=\phi_1^*g\quad {\rm and}\quad g_2:=\phi_2^*g_0=(\Phi\circ \phi_1)^*g_0.
\end{align*}
For any point $p\in L$, we have decompositions
\begin{align*}
T_{\phi_{\alpha}(p)}L=E_{\phi_{\alpha}(p)}^l\oplus \mathfrak{z}_{\phi_{\alpha}(p)},\quad {\rm where}\quad \mathfrak{z}_{\phi_{\alpha}(p)}:= {\rm span}_{\mathbb{R}}\{\tilde{v}_{\phi_{\alpha}(p)}\}
\end{align*}
for $\alpha=1,2$. This is an orthogonal decomposition with respect to the metric $g_{\alpha}$. By \eqref{iso}, we have isomorphsims $\Phi_*|_{E_{\phi_1(p)}^l}: E_{\phi_1(p)}^l\xrightarrow{\sim} E_{\phi_2(p)}^l $ and $\Phi_*|_{\mathfrak{z}_{\phi_1(p)}}: \mathfrak{z}_{\phi_1(p)}\xrightarrow{\sim} \mathfrak{z}_{\phi_2(p)}$.  Because of this reason, we simply write
\begin{align*}
T_pL=E^l_p\oplus \mathfrak{z}_p
\end{align*}
and use identifications $E_p^l\simeq E^l_{\phi_1(p)}\simeq E^l_{\phi_2(p)}$ and $\mathfrak{z}_p\simeq \mathfrak{z}_{\phi_1(p)}\simeq \mathfrak{z}_{\phi_2(p)}$ in the following.
According to this decomposition (with identifications via $\Phi$), it turns out that the induced metrics $g_1$ and $g_2$ are decomposed into
\begin{align}\label{deco2}
g_1=g_E\oplus (\cosh^2r\cdot g_{\mathfrak{z}})\quad {\rm and}\quad g_2=g_E\oplus g_{\mathfrak{z}},
\end{align}
respectively, where $g_E:=g_2|_{E_p^l}$ and $g_{\mathfrak{z}}:=g_2|_{\mathfrak{z}_p}$. In fact, for $\alpha=1,2$, we have $g_{\alpha}|_{E_p^l}=\pi_{\alpha}^*(\phi_c^*g_{FS})$, where $\phi_c: L/Z\to \mathbb{C}P^{n-1}(4/\sinh^2r)$ and $g_{FS}$ is the Fubini-Study metric on $\mathbb{C}P^{n-1}(4/\sinh^2r)$. On the other hand, we have
\begin{align*}
\xi_1=\frac{1}{\cosh r}\xi_2
\end{align*}
by \eqref{xi}, and this implies $g_1|_{\mathfrak{z}_p}=\cosh^2r\cdot g_2|_{\mathfrak{z}_p}$ as given in \eqref{deco2}. In particular, we can take local orthonormal bases of $L$ with respect to $g_1$ and $g_2$ by $\{e_1,\ldots, e_{n-1}, \xi_1\}$ and $\{e_1,\ldots, e_{n-1}, \xi_2\}$, respectively, where $\{e_i\}_{i=1}^{n-1}$ is an orthonormal basis of $(E_p^l, g_E)$. In other words, we take $\{e_i\}_{i=1}^{n-1}$ so that $\{\overline{e}_i:=(\pi_1\circ \phi_1)_*e_i\}_{i=1}^{n-1}$ is a local orthonormal basis of $L/Z$ in $\mathbb{C}P^{n-1}(4/\sinh^2 r)$.

Denote the norm, the Levi-Civita connection, gradient and Hodge-de Rham Lapacian for function $u\in C^{\infty}(L)$ with respect to $g_1:=\phi_1^*g$ and $g_2:=\phi^*_2g_{0}$ by $|\cdot|_1$ and $|\cdot|_2$, $\nabla^1$ and $\nabla^2$, $\nabla_1u$ and $\nabla_2u$, $\Delta_1u$ and $\Delta_2u$, respectively.  

\begin{lemma}\label{c1}
We have the following:
\begin{itemize}
\item[(a)] For any $X\in T_pL$, we have $|X|_1^2=|X|_2^2+\sinh^2r \cdot \eta_2(X)^2$.
\item[(b)] For any $u\in C^{\infty}(L)$, we have $\nabla_1u=\nabla_2 u-\tanh^2r\cdot \xi_2(u)\xi_2$. Moreover, 
 \begin{align*}
 |\nabla_1u|^2_1=|\nabla_2u|_2^2-\tanh^2r\cdot \xi_2(u)^2.
 \end{align*}
\item[(c)] Let $\{e_1,\ldots, e_{n-1}, \xi_1\}$ and $\{e_1,\ldots, e_{n-1},\xi_2\}$ be the local frame of $L$ taking above. Then, the Levi-Civita connections are related as follows: 
\begin{align*}
\nabla^1_{e_i}e_j&=\nabla^2_{e_i}e_j\quad {\rm for}\quad i,j=1,\ldots, n-1.\\
\nabla^\alpha_{e_i}\xi_{\alpha}&=\nabla^\alpha_{\xi_{\alpha}}e_i=\nabla^{\alpha}_{\xi_\alpha}\xi_{\alpha}=0\quad {\rm for}\quad \alpha=1,2,\quad {i=1,\ldots, n-1}.
\end{align*} 
\item[(d)] For any $u\in C^{\infty}(L)$, we have $\Delta_1u=\Delta_2u+\tanh^2r\cdot\xi_2(\xi_2u)$.
\end{itemize}
\end{lemma}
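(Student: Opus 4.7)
The entire lemma reduces to two structural inputs already in hand: the orthogonal split \eqref{deco2}, $g_1 = g_E\oplus(\cosh^2 r)g_{\mathfrak{z}}$ versus $g_2 = g_E\oplus g_{\mathfrak{z}}$, together with $\xi_1 = \xi_2/\cosh r$ and the constancy of $r$ on $L\subset S_r^{2n-1}$. My plan is to dispatch (a) and (b) by linear algebra on the common decomposition $T_pL = E_p^l\oplus\mathfrak{z}_p$, establish (c) via the Koszul formula combined with $Z$-invariance of the horizontal frame $\{e_i\}$, and finally obtain (d) as a trace computation in the orthonormal frame $\{e_1,\dots,e_{n-1},\xi_\alpha\}$.

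For (a) I write $X=X_E+\eta_2(X)\xi_2$ and substitute directly into \eqref{deco2}, obtaining $|X|_1^2-|X|_2^2=(\cosh^2r-1)\eta_2(X)^2=\sinh^2r\cdot\eta_2(X)^2$. For (b), $\nabla_\alpha u$ is characterized by $du=g_\alpha(\nabla_\alpha u,\cdot)$; testing against vectors in $E_p^l$ shows the horizontal components of $\nabla_1u$ and $\nabla_2u$ agree (since $g_1=g_2$ on $E^l$), while testing against $\xi_2$ together with $g_1(\xi_2,\xi_2)=\cosh^2r$ forces the $\xi_2$-coefficient of $\nabla_1u$ to equal $\xi_2(u)/\cosh^2r$. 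Because $\tfrac{1}{\cosh^2r}-1=-\tanh^2r$, this gives $\nabla_1u=\nabla_2u-\tanh^2r\cdot\xi_2(u)\xi_2$; the norm identity then follows by applying (a) to $\nabla_1u$ with $\eta_2(\nabla_1u)=\xi_2(u)/\cosh^2r$.

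For (c), the $e_i$ are horizontal lifts of a local orthonormal frame on the common quotient $L/Z$ and are therefore $Z$-invariant; combined with the fact that $\xi_\alpha$ is a constant multiple of the fundamental vector field $\tilde v$ on $L$ (by \eqref{normv}, since $|\tilde v|_g$ depends only on $r$), this yields $[\xi_\alpha,e_i]=0$. The $Z$-action being by $g_\alpha$-isometries makes $\xi_\alpha$ a unit Killing field, and the Killing equation immediately gives $g_\alpha(\nabla^\alpha_{\xi_\alpha}\xi_\alpha,Y)=-\tfrac12 Y|\xi_\alpha|_{g_\alpha}^2=0$, hence $\nabla^\alpha_{\xi_\alpha}\xi_\alpha=0$. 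The identities $\nabla^\alpha_{e_i}\xi_\alpha=\nabla^\alpha_{\xi_\alpha}e_i=0$ then follow from Koszul together with integrability of the horizontal distribution $E^l$ (valid in the local product setting $L\cong (L/Z)\times Z$ underlying the torus-orbit applications). Finally, $\nabla^1_{e_i}e_j=\nabla^2_{e_i}e_j$ is read off from O'Neill's formula for the Riemannian submersion $L\to L/Z$: both sides equal the horizontal lift of $\nabla^{L/Z}_{\bar e_i}\bar e_j$ plus $\tfrac12[e_i,e_j]^V$, and both summands are $\alpha$-independent since the quotient metric and the subspaces $E^l,\mathfrak{z}$ do not depend on $\alpha$.

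Part (d) is then a routine trace calculation. Expanding
\begin{align*}
\Delta_\alpha u = -\sum_{i=1}^{n-1}\bigl\{e_i(e_iu)-(\nabla^\alpha_{e_i}e_i)(u)\bigr\}-\bigl\{\xi_\alpha(\xi_\alpha u)-(\nabla^\alpha_{\xi_\alpha}\xi_\alpha)(u)\bigr\}
\end{align*}
in the orthonormal frame, the $e_i$-sums coincide for $\alpha=1,2$ by (c) and the $\nabla^\alpha_{\xi_\alpha}\xi_\alpha$-term vanishes; then $\xi_1=\xi_2/\cosh r$ with $r$ constant on $L$ gives $\xi_1(\xi_1 u)=\xi_2(\xi_2u)/\cosh^2r$, whence $\Delta_1u-\Delta_2u=(1-\cosh^{-2}r)\xi_2(\xi_2u)=\tanh^2r\cdot\xi_2(\xi_2u)$. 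The main obstacle is the connection computation in (c), specifically $\nabla^\alpha_{e_i}\xi_\alpha=0$, which requires integrability of the horizontal distribution $E^l$; this enters precisely through the fibration structure of $L\to L/Z$ (totally geodesic Killing fibers with an integrable horizontal complement in the relevant homogeneous setting), and it is this vanishing that makes the Laplacian comparison in (d) depend only on the fiber direction.
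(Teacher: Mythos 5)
Your proof of (a), (b) and (d) is the same computation as the paper's: expand in the adapted frame $\{e_1,\dots,e_{n-1},\xi_\alpha\}$ using \eqref{deco2} and $\xi_1=\xi_2/\cosh r$, and read off the discrepancy in the $\mathfrak z$-direction. The only real divergence is in (c). The paper kills the $\xi_\alpha$-component of $\nabla^\alpha_{e_i}e_j$ by the extrinsic identity \eqref{ija}, i.e.\ $g_\alpha(\nabla^\alpha_{e_i}e_j,\xi_\alpha)=g_\alpha(\widetilde A^\alpha(e_i),J e_j)=0$, which uses the $\eta$-umbilicity of the ambient hypersurface together with the Lagrangian condition ($\widetilde A^\alpha(e_i)$ is proportional to $e_i$ and $Je_j\perp T_pL$); you instead invoke integrability of $E^l$ plus the Koszul formula. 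Your route does work, and arguably isolates the intrinsic mechanism more cleanly, but as written it has a soft spot: you justify integrability of $E^l$ only ``in the local product setting $L\cong(L/Z)\times Z$ underlying the torus-orbit applications,'' whereas the lemma is stated for an arbitrary $C(K)$-invariant Lagrangian in $S^{2n-1}_r$ and the paper later remarks that the framework applies to all compact homogeneous Lagrangians. The gap is easily closed without any product assumption: for $X,Y\in\Gamma(E^l)$ one has $g_\alpha(\xi_\alpha,\nabla^\alpha_XY)=-g_\alpha(\nabla^\alpha_X\xi_\alpha,Y)=-a\,g_\alpha(J_\alpha X,Y)=-a\,\omega(X,Y)=0$ by $\eta$-umbilicity and the Lagrangian condition, so $\eta_\alpha([X,Y])=0$ and $E^l$ is integrable on any such $L$ — this is exactly the content of \eqref{ija}. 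With that substitution your argument is complete; the remaining steps ($[\tilde v,e_i]=0$ from $Z$-invariance of the frame, $\tilde v$ Killing with $|\tilde v|_{g_\alpha}$ constant on $L$ by \eqref{normv}, and O'Neill for $\nabla^1_{e_i}e_j=\nabla^2_{e_i}e_j$) match the paper's.
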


\begin{proof}
For any $X\in T_pL$, we  set $X=\sum_{i=1}^{n-1}X_ie_i+X_n\xi_1=\sum_{i=1}^{n-1}X_ie_i+\frac{1}{\cosh r}X_n\xi_2$.  Then, we see
\begin{align*}
|X|_1^2=\sum_{i=1}^{n-1}X_i^2+X_n^2=|X|_2^2+\Big(1-\frac{1}{\cosh^2 r}\Big)X_n^2=|X|_2^2+\sinh^2 r\cdot\eta_2(X)^2.
\end{align*}
This proves (a). Next, we see
\begin{align*}
\nabla_1u&=\sum_{i=1}^{n-1}(e_iu)e_i+(\xi_1u)\xi_1=\sum_{i=1}^{n-1}(e_iu)e_i+\frac{1}{\cosh^2 r}(\xi_2u)\xi_2
=\nabla_2u-\Big(1-\frac{1}{\cosh^2r}\Big)(\xi_2u)\xi_2\\
&=\nabla_2u-\tanh^2r\cdot (\xi_2u)\xi_2.
\end{align*}
Moreover, by using (a), we have
\begin{align*}
|\nabla_1u|^2_1&=|\nabla_1u|^2_2+\sinh^2 r\cdot \eta_2(\nabla_1u)^2\\
&=\Big|\nabla_2u-\tanh^2r\cdot \xi_2(u)\xi_2\Big|^2_2+\sinh^2r\cdot\eta_2\Big(\nabla_2u-\tanh^2r\cdot \xi_2(u)\xi_2\Big)^2\\
&=|\nabla_2u|_2^2-2\tanh^2r\cdot\xi_2(u)^2+\tanh^4 r\cdot \xi_2(u)^2+\sinh^2r\cdot \{(1-\tanh^2r)\xi_2(u)\}^2 \\
&=|\nabla_2u|_2^2-\tanh^2r\cdot \xi_2(u)^2,
\end{align*}
where we used a relation $\sinh^2r=\tanh^2 r/(1-\tanh^2 r)$.
 This proves (b).
 
 Next, we shall show (c).  Since $\pi_1, \pi_2: L\rightarrow L/Z$ are  Riemannian submersions, we have 
 \begin{align*}
{\nabla}^c_{\overline{e}_i}\overline{e}_j= (\pi_1\circ \phi_1)_*(\nabla^1_{e_i}e_j)&=(\pi_2\circ \phi_2)_*(\nabla^2_{e_i}e_j),
 %&=(\pi_2\circ\Phi\circ \phi_2)_*(\nabla^2_{e_i}e_j)=(\pi_1\circ \phi_1)_*(\nabla^2_{e_i}e_j).\nonumber
 \end{align*}
 where  $\nabla^c$  is the Levi-Civita connection on $L/Z$. This implies
 $(\nabla^1_{e_i}e_j)^{\top_{E^l}^{1}}=(\nabla^2_{e_i}e_j)^{\top_{E^l}^2}$, where $\top_{E^l}^{\alpha}$ means the orthogonal projection with respect to $g_{\alpha}$ onto $E^l$.
  On the other hand, we see
 \begin{align}\label{ija}
 g_{\alpha}(\nabla^{\alpha}_{e_i}e_j, \xi_{\alpha})&=-g_{\alpha}(\overline{\nabla}^{\alpha}_{e_i}e_j, JN_{\alpha})=-g_{\alpha}(\overline{\nabla}^{\alpha}_{e_i}N_{\alpha}, Je_j)=g_{\alpha}(\widetilde{A}^{\alpha}(e_i), Je_j)=0
 \end{align}
 since $Je_j\in E$ and the $\eta$-umbilical conditions. Therefore, we have $\nabla^{\alpha}_{e_i}e_j=(\nabla^{\alpha}_{e_i}e_j)^{\top_{E^l}}$, and we obtain $\nabla^1_{e_i}e_j=\nabla^2_{e_i}e_j$.
 
 Next, we consider $\nabla^{\alpha}_{e_i}\xi_{\alpha}$ and $\nabla^{\alpha}_{\xi_{\alpha}}\xi_{\alpha}$. Since $|\xi_{\alpha}|_{g_{\alpha}}=1$, we have $g_{\alpha}(\nabla^{\alpha}_{e_i}\xi_{\alpha}, \xi_{\alpha})=0$. Moreover, \eqref{ija} shows $g_{\alpha}(\nabla^{\alpha}_{e_i}\xi_{\alpha}, e_j)=-g_{\alpha}(\nabla^{\alpha}_{e_i}e_j,\xi_{\alpha})=0$ for any $i,j=1,\ldots, n-1$. Thus, $\nabla^{\alpha}_{e_i}\xi_{\alpha}=0$. Moreover, since $e_i$ is $C(K)$-invariant, we have $[\tilde{v}, e_i]=0$, and hence, $\nabla^{\alpha}_{\xi_{\alpha}}e_i=(const.)\nabla^{\alpha}_{\tilde{v}}e_i=(const.)\nabla^{\alpha}_{e_i}\tilde{v}=\nabla^{\alpha}_{e_i}{\xi}_{\alpha}=0$, where $const.$ is depends only on $\alpha$ and $r$. Similarly, we have $g_{\alpha}(\nabla^{\alpha}_{\xi_{\alpha}}\xi_{\alpha}, \xi_{\alpha})=0$, and 
 \begin{align*}
 g_{\alpha}(\nabla^{\alpha}_{\xi_{\alpha}}\xi_{\alpha}, e_i)=-g_{\alpha}(\nabla^{\alpha}_{\xi_{\alpha}}e_i,\xi_{\alpha})=-(const.) g_{\alpha}(\nabla^{\alpha}_{\tilde{v}}e_i, \tilde{v})=-(const.) g_{\alpha}(\nabla^{\alpha}_{e_i}{\tilde{v}}, \tilde{v})=0
 \end{align*}
 since $|\tilde{v}|_{g_{\alpha}}$ is constant on $L$.
   Thus, we obtain $\nabla^{\alpha}_{\xi_{\alpha}}\xi_{\alpha}=0$. This proves (c).
   
   Finally, we show (d). In the local orthonormal frame, by using (c), we see
   \begin{align*}
   -\Delta_1u&=\sum_{i=1}^{n-1}e_i(e_iu)+\xi_1(\xi_1u)-\sum_{i=1}^{n-1}(\nabla^{1}_{e_i}e_i)u\\
   &=\sum_{i=1}^{n-1}e_i(e_iu)+\frac{1}{\cosh^2 r}\xi_2(\xi_2u)-\sum_{i=1}^{n-1}(\nabla^{2}_{e_i}e_i)u\\
   &=-\Delta_2u-\tanh^2r\cdot \xi_2(\xi_2u).
   \end{align*}
   This proves (d).
\end{proof}

Next, we compare extrinsic properties. Denote the second fundamental form and the mean curvature vector of the immersion $\phi_1: L\rightarrow (B^{n},g)$ and $\phi_2: L\rightarrow (\mathbb{C}^n, g_{0})$ by $B_1$ and $B_2$, $H_1$ and $H_2$, respectively. Also, we set $H_{\alpha}':=(H_{\alpha})_E$ and  $S_{\alpha}(X,Y,W):=g_{\alpha}(B_{\alpha}(X,Y), J_{\alpha}W)$
for $X,Y,W\in \Gamma(TL)$ and $\alpha=1,2$ as introduced in Section \ref{sec2}, where $J_1$ and $J_2$ denotes the complex structure on $B^n$ and $\mathbb{C}^n$, respectively.

\begin{lemma}\label{c2}
For $X,Y,W\in E_p^l$, we have 
\begin{align}
\label{c21}  S_1(X,Y,W)=S_2(X,Y,W).
\end{align}
In particular, we see
\begin{align}\label{c22}
g_1(J_1H_1', W)=g_2(J_2H_2', W)\quad {\rm and}\quad S_1(X,Y, J_1H_1')=S_2(X,Y, J_2H_2').
\end{align}
\end{lemma}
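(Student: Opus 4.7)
Proof plan for Lemma \ref{c2}.

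The plan is to reduce $S_\alpha(X,Y,W)$ to a quantity computed on the common K\"ahler quotient $M_c = \mathbb{C}P^{n-1}(4/\sinh^2 r)$, and then use the commutative diagram to see that both $\alpha = 1,2$ produce the same value.

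First, I would use the decomposition \eqref{bnl}, $B_\alpha(X,Y) = B'_\alpha(X,Y) + \widetilde{B}_\alpha(X,Y)$, to write
\[
S_\alpha(X,Y,W) = g_\alpha(B'_\alpha(X,Y), J_\alpha W) + g_\alpha(\widetilde{B}_\alpha(X,Y), J_\alpha W).
\]
Since $\widetilde{B}_\alpha(X,Y) = g_\alpha(\widetilde{A}_\alpha(X), Y)\, N_\alpha$ and the $\eta$-umbilical condition together with $\eta_\alpha(X)=\eta_\alpha(Y)=0$ (as $X,Y\in E^l$) give $\widetilde{B}_\alpha(X,Y) = a_\alpha g_\alpha(X,Y)\, N_\alpha$, while $g_\alpha(N_\alpha, J_\alpha W) = g_\alpha(\xi_\alpha, W) = 0$ for $W\in E^l$, the second term vanishes. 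Thus $S_\alpha(X,Y,W) = g_\alpha(\widetilde{\nabla}^\alpha_X Y, J_\alpha W)$ where $\widetilde{\nabla}^\alpha$ is the Levi-Civita connection on $\mu^{-1}(c)$.

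Next I would push down to $M_c$ via the Riemannian submersion $\pi_\alpha$. Since $X,Y,J_\alpha W\in E$ and $\pi_{\alpha*}|_E$ is an isometry commuting with $J$ (i.e.\ $\pi_{\alpha*}J_\alpha = J_c\pi_{\alpha*}$ on $E$), together with $\pi_{\alpha*}(\widetilde{\nabla}^\alpha_X Y)=\overline{\nabla}^c_{\pi_{\alpha*}X}\pi_{\alpha*}Y$, I get
\[
S_\alpha(X,Y,W) = g_c\bigl(\overline{\nabla}^c_{\pi_{\alpha*}X}\pi_{\alpha*}Y,\ J_c \pi_{\alpha*}W\bigr).
\]
Now the commutative diagram (the bottom row, where $\pi_1$ and $\pi_2$ land in the same $\mathbb{C}P^{n-1}(4/\sinh^2 r)$) gives $\pi_{2*}\circ \Phi_* = \pi_{1*}$ on $T S^{2n-1}_r$, and in particular on $E$. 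Hence $\pi_{1*}X = \pi_{2*}X$ under the identification $E^l_{\phi_1(p)}\simeq E^l_{\phi_2(p)}$ via $\Phi_*$, and the right-hand side is the same for $\alpha=1$ and $\alpha=2$. This proves \eqref{c21}. A byproduct worth recording: $\Phi_*J_1 = J_2\Phi_*$ on $E$ (although $\Phi$ is not globally holomorphic), since both sides satisfy $\pi_{2*}\circ(\cdot) = J_c\pi_{1*}$ on $E$, and $\pi_{2*}|_E$ is injective.

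For \eqref{c22}, I would trace $S_\alpha(\cdot,\cdot, W)$ using the orthonormal frame $\{e_1,\dots,e_{n-1},\xi_\alpha\}$. By Lemma \ref{sform} (applied to $Y=\xi_\alpha$), $S_\alpha(\xi_\alpha,\xi_\alpha,W) = a_\alpha g_\alpha(\xi_\alpha,W) + b_\alpha\eta_\alpha(\xi_\alpha)\eta_\alpha(W) = 0$ for $W\in E^l$. Combined with \eqref{mcl} (which says $J_\alpha H_\alpha = J_\alpha H'_\alpha - (a_\alpha n + b_\alpha)\xi_\alpha$), this yields
\[
g_\alpha(J_\alpha H'_\alpha, W) = -\sum_{i=1}^{n-1} S_\alpha(e_i,e_i,W),
\]
and \eqref{c21} gives the first equality of \eqref{c22}. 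Since both $J_1 H'_1$ and $J_2 H'_2$ lie in $E^l$ (where $g_1$ and $g_2$ agree), this equality of linear functionals on $E^l$ forces $J_1 H'_1 = J_2 H'_2$ under the identification, and then the second equality of \eqref{c22} is just \eqref{c21} with $W = J_\alpha H'_\alpha\in E^l$.

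The main technical hurdle is the middle step: verifying that the ambient-level computation $g_\alpha(\widetilde{\nabla}^\alpha_X Y, J_\alpha W)$ descends cleanly to the common base $M_c$, and that the two projection data $\pi_{1*}|_E$ and $\pi_{2*}\circ \Phi_*|_E$ really do agree as $J$-preserving isometries; this is where the concrete identity $\Phi|_{S^{2n-1}_r}(z) = \cosh r \cdot z$ and the $K$-equivariance of $\Phi$ do the heavy lifting.
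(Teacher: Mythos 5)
Your proof of \eqref{c21} is essentially the paper's: kill the ambient second fundamental form term against $J_\alpha W$ using the $\eta$-umbilical condition and $g_\alpha(\xi_\alpha,W)=0$, then push $g_\alpha(\widetilde{\nabla}^\alpha_X Y, J_\alpha W)$ down the Riemannian submersion $\pi_\alpha$ to the common quotient $\mathbb{C}P^{n-1}(4/\sinh^2 r)$ and use $\pi_{2*}\circ\Phi_*=\pi_{1*}$; the first equality of \eqref{c22} by tracing with $S_\alpha(\xi_\alpha,\xi_\alpha,W)=0$ is also the same. The one place you diverge is the second equality of \eqref{c22}: the paper identifies $(\pi_\alpha)_*H'_\alpha$ with the mean curvature vector $H_c$ of the reduced immersion $L/Z\to\mathbb{C}P^{n-1}$, which requires citing an external reduction lemma (Lemma 3 of \cite{K}) together with the constancy of $|\tilde v|_{g_\alpha}$ on $L$. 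You instead observe that $J_\alpha H'_\alpha\in E^l_p$, that the first equality of \eqref{c22} identifies the two linear functionals $g_\alpha(J_\alpha H'_\alpha,\cdot)$ on $E^l_p$, and that $g_1$ and $g_2$ agree there, forcing $\Phi_*(J_1H'_1)=J_2H'_2$; the second equality then follows from \eqref{c21} applied to $W=J_\alpha H'_\alpha$. This is correct and self-contained — it trades the external citation for a nondegeneracy argument and is arguably the cleaner route, at the cost of not exhibiting the geometric fact that $H'_\alpha$ projects to the reduced mean curvature (which the paper has other uses for).
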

\begin{proof}
For $i,j,k=1,\ldots, n-1$ and $\alpha=1,2$, we have
\begin{align*}
S_{\alpha}(e_i,e_j,e_k)=g_{\alpha}(\overline{\nabla}^{\alpha}_{e_i}e_j, J_{\alpha}e_k)=g_{\alpha}(\widetilde{\nabla}^{\alpha}_{e_i}e_j, J_{\alpha}e_k)=g_c(\overline{\nabla}^c_{\overline{e}_i}\overline{e}_j, J_c \overline{e}_k)
\end{align*}
since $\pi_{\alpha}$ is a  Riemannian submersion onto $\mathbb{C}P^{n-1}(4/\sinh^2r )$ and $(\pi_{\alpha})_*\circ J_{\alpha}=J_c\circ (\pi_{\alpha})_*$. This shows  $S_{1}(e_i,e_j,e_k)=S_{2}(e_i,e_j,e_k)$ for any $i,j,k=1,\ldots, n-1$, and hence, we obtain  \eqref{c21}. 

Recall that  $S_1(\xi_1,\xi_1, W)=S_2(\xi_2,\xi_2, W)=0$ for $W\in E_p^l$ (see the proof of Lemma \ref{sform}), and hence, by taking the trace of the former two components of $S$ and using the fact that $E_p^l$ is $J_{\alpha}$-invariant,  we obtain the first equality of \eqref{c22}.  Moreover, we have
\begin{align*}
S_{\alpha}(e_i,e_j,J_{\alpha}H_{\alpha}')=-g_c(\overline{\nabla}^c_{\overline{e}_i}\overline{e}_j, (\pi_{\alpha})_*H_{\alpha}').
\end{align*}
Here, it turns out that $(\pi_{\alpha})_*H_{\alpha}'$ coincides with the mean curvature vector $H_c$ of the reduced Lagrangian immersion $L/Z\to \mathbb{C}P^{n-1}(4/\sinh^2r)$. This can be shown by using Lemma 3 in \cite{K}  and the fact that,  in our setting,  $|\tilde{v}_z|_{g_{\alpha}}$ is constant on $L$ for each $\alpha$. This proves the second equation of \eqref{c22}.
\end{proof}

On the other hand,  the shape operators $\widetilde{A}^1$ of $S^{2n-1}_r\rightarrow B^{2n}$ and $\widetilde{A}^2$ of $S^{2n-1}(\sinh r)\rightarrow \mathbb{C}^n$ (with respect to $N_1:=J\xi_1$ and $N_2:=J_0\xi_2$, respectively) satisfy 
\begin{align}\label{shape}
\widetilde{A}^1(X)=\coth r\cdot X+\tanh r\cdot \eta_1(X)\xi_1\quad {\rm and}\quad \widetilde{A}^2(Y)=\frac{1}{\sinh r}Y \end{align}
for $X\in \Gamma(TS^{2n-1}_r)$ and $Y\in \Gamma(TS^{2n-1}(\sinh r))$, respectively.

\begin{lemma}\label{c3}
We have
\begin{align*}
S_1(\nabla_1u, \nabla_1u, J_1H_1)&=S_2(\nabla_2u, \nabla_2u, J_2H_2)\\
&\quad -(n+1) |\nabla_2u|^2_2+(n+\tanh^2r)\tanh^2r\xi_2(u)^2\quad {\rm and}\\
J_1H_1(u)&=J_2H_2(u)-\frac{\tanh r}{\cosh r}\xi_2(u).
\end{align*}
\end{lemma}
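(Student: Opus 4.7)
The plan is to reduce both identities to direct applications of Lemma \ref{sform} together with the comparison results in Lemmas \ref{c1} and \ref{c2}, using the shape operator data \eqref{shape} to feed into the formulas.

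First I would record the relevant constants. For $L\subset S^{2n-1}_r\subset B^n$ we have $(a_1,b_1)=(\coth r,\tanh r)$, and from \eqref{mcl} this yields $a_1 n+b_1=n\coth r+\tanh r$ and $J_1H_1=J_1H_1'-(n\coth r+\tanh r)\xi_1$. For $L\subset S^{2n-1}(\sinh r)\subset \mathbb C^n$ we have $(a_2,b_2)=(1/\sinh r,0)$, hence $a_2 n+b_2=n/\sinh r$ and $J_2H_2=J_2H_2'-(n/\sinh r)\xi_2$.

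Next I would identify the $E$-components of the two gradients. From Lemma \ref{c1}(b) one has $\nabla_1u=\nabla_2u-\tanh^2 r\cdot\xi_2(u)\xi_2$. Using $\xi_1=\xi_2/\cosh r$, a short algebraic check with $\tanh^2 r+\cosh^{-2}r=1$ gives
\begin{align*}
(\nabla_1u)_E=(\nabla_2u)_E=:X_E,\qquad \eta_1(\nabla_1u)=\tfrac{1}{\cosh r}\xi_2(u),\qquad \eta_2(\nabla_2u)=\xi_2(u).
\end{align*}
Lemma \ref{c2} then gives $S_1(X_E,X_E,J_1H_1')=S_2(X_E,X_E,J_2H_2')$ and $J_1H_1'=J_2H_2'$ as elements of $E_p^l$ (since the $g_\alpha$ agree on $E^l_p$); in particular $g_1(X_E,J_1H_1')=g_2(X_E,J_2H_2')$.

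For the first identity I would now apply Lemma \ref{sform} separately to $S_1(\nabla_1u,\nabla_1u,J_1H_1)$ and $S_2(\nabla_2u,\nabla_2u,J_2H_2)$. The first two terms of each expansion coincide: the $S(X_E,X_E,JH')$ terms match directly, and the cross terms match because $2a_1\eta_1(\nabla_1u)=2\coth r\cdot(\xi_2(u)/\cosh r)=2\xi_2(u)/\sinh r=2a_2\eta_2(\nabla_2u)$. Hence the only discrepancy comes from the $-(an+b)\{a|X|^2+b\eta(X)^2\}$ terms, for which I substitute $|\nabla_1u|_1^2=|\nabla_2u|_2^2-\tanh^2r\cdot\xi_2(u)^2$ from Lemma \ref{c1}(b). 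Expanding this residual difference and using the two identities $\coth r\cdot\tanh r=1$ and $\coth^2r=1+1/\sinh^2r$, the coefficient of $|\nabla_2u|_2^2$ simplifies to $-(n+1)$ and the coefficient of $\xi_2(u)^2$ simplifies to $(n+\tanh^2 r)\tanh^2 r$, giving the stated formula. For the second identity I would write $J_1H_1(u)-J_2H_2(u)$ using the expressions above; since $J_1H_1'=J_2H_2'$ act on $u$ identically, what remains is
\begin{align*}
J_1H_1(u)-J_2H_2(u)=-(n\coth r+\tanh r)\xi_1(u)+\tfrac{n}{\sinh r}\xi_2(u),
\end{align*}
and inserting $\xi_1(u)=\xi_2(u)/\cosh r$ collapses to $-(\tanh r/\cosh r)\xi_2(u)$ after the $n/\sinh r$ contributions cancel.

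The proof is purely computational and the main obstacle is bookkeeping: one must carefully track which metric, which Reeb vector field and which decomposition is being used in each term, and invoke the correct combination of Lemmas \ref{c1} and \ref{c2} to convert quantities defined with respect to $g_1$ into quantities defined with respect to $g_2$. Beyond that, no new geometric input is needed; the identities follow from Lemma \ref{sform} plus elementary hyperbolic trigonometry.
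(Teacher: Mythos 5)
Your proposal is correct and follows essentially the same route as the paper: expand both $S_\alpha(\nabla_\alpha u,\nabla_\alpha u,J_\alpha H_\alpha)$ via Lemma \ref{sform} with the shape-operator data \eqref{shape}, match the first two terms using Lemma \ref{c2} and $\xi_1=\xi_2/\cosh r$, and reduce the residual $-(an+b)\{a|X|^2+b\eta(X)^2\}$ difference with Lemma \ref{c1}(b) and the identities $\coth r\tanh r=1$, $\coth^2 r=1+1/\sinh^2 r$. All the intermediate identities you record (including $(\nabla_1u)_E=(\nabla_2u)_E$, the matching of the cross terms, and the cancellation of the $n/\sinh r$ contributions in the mean-curvature identity) check out against the paper's computation.
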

\begin{proof}
By \eqref{shape} and Lemma \ref{sform}, we have
\begin{align}
\label{c31}
S_1(\nabla_1u, \nabla_1u, J_1H_1)
&=S_1\Big((\nabla_1u)_E,(\nabla_1u)_E, J_1H_1'\Big)+2\coth r\cdot \xi_1(u)g_1\Big(J_1H_1' ,(\nabla_1 u)_E\Big)\\
&\quad -(n\coth r +\tanh r){\{}\coth r |\nabla_1u|^2_1+\tanh r\xi_1(u)^2\}.\nonumber\\
\label{c32}
S_2(\nabla_2u, \nabla_2u, J_2H_2)
&=S_2\Big((\nabla_2u)_E,(\nabla_2u)_E, J_2H_2'\Big)+\frac{2}{\sinh r}\cdot \xi_2(u)g_2\Big(J_2H_2' ,(\nabla_2 u)_E\Big)\\
&\quad -\frac{n}{\sinh^2 r} |\nabla_2u|^2_2.\nonumber
\end{align}
Here, by Lemma \ref{c2} and the relation $\xi_1=\frac{1}{\cosh r}\xi_2$, it turns out that the first two terms in the RHS of \eqref{c31} and \eqref{c32} coincides with each other. Therefore, by using Lemma \ref{c1} we see
\begin{align*}
&S_1(\nabla_1u, \nabla_1u, J_1H_1)-S_2(\nabla_2u, \nabla_2u, J_2H_2)\\
&=-(n\coth r +\tanh r)\Big{\{}\coth r \Big(|\nabla_2u|^2_2-\tanh^2r\xi_2(u)^2\Big)+\tanh r\frac{\xi_2(u)^2}{\cosh^2r}\Big{\}}+\frac{n}{\sinh^2 r} |\nabla_2u|^2_2\\
&=-(n\coth r +\tanh r)\Big{\{}\coth r\cdot |\nabla_2u|^2_2-\tanh^3r\cdot \xi_2(u)^2\Big{\}}+\frac{n}{\sinh^2 r} |\nabla_2u|^2_2\\
&=-(n+1) |\nabla_2u|^2_2+(n+\tanh^2r)\tanh^2r\xi_2(u)^2.
\end{align*}

On the other hand, we see
\begin{align*}
J_1H_1(u)&=J_1H_1'(u)-(n\coth r+\tanh r)\xi_1(u)\\
&=J_2H_2'(u)-\Big(\frac{n}{\sinh r}+\frac{\tanh r}{\cosh r}\Big)\xi_2(u)=J_2H_2(u)-\frac{\tanh r}{\cosh r}\xi_2(u)
\end{align*}
by \eqref{mcl}, \eqref{shape} and \eqref{c22}.
\end{proof}

Now, we are ready to prove the following formula:

\begin{theorem}\label{keyprop}
Let  $\phi: L\rightarrow \mathbb{C}H^n(-4)$ be a $C(K)$-invariant Lagrangian embedding whose image is contained in the geodesic hypersphere $S^{2n-1}_r\subset \mathbb{C}H^n(-4)$ of geodesic radius $r\in (0,\infty)$. Suppose furthermore $\phi$ is H-minimal in $\mathbb{C}H^n(-4)$. Then, $\phi$ is H-stable in $\mathbb{C}H^n(-4)$ if and only if the corresponding Lagrangian embedding $\phi_2:=\Phi\circ \phi: L\rightarrow \mathbb{C}^n$   satisfies
\begin{align} \label{svfre}
&\int_L |\Delta_2u|^2-2g_2(B_2(\nabla_2u, \nabla_2u), H_2)+J_2 H_2(u)^2\\
&\quad+2\tanh^2r\cdot \Delta_2u\cdot \xi_2\xi_2(u)-2\frac{\tanh r}{\cosh r}\xi_2(u)J_2H_2(u)\nonumber\\
&\quad+\tanh^4r|\xi_2\xi_2(u)|^2-\frac{\tanh^2 r}{\cosh^2r}|\xi_2(u)|^2dv_{g_2}\geq 0\nonumber
%&\int_L \Big|\Delta_2 u+\tanh^2r\cdot\xi_2\xi_2(u)\Big|^2-\Big|\frac{\tanh r}{\cosh r}\xi_2(u)+J_2H_2(u)\Big|^2\\
%\frac{\tanh^2 r}{\cosh^2r}\xi_2(u)^2-\frac{2\tanh r}{\cosh r}\xi_2(u)g_2(\nabla_2 u, J_2H_2)\\
%&\quad -2g_2(B_2(\nabla_2u, \nabla_2u), H_2)+2J_2 H_2(u)^2dv_{g_2}\geq 0\nonumber
\end{align}
for any $u\in C^{\infty}(L)$, where $\xi_2$ is the Reeb vector field on the hypersphere $S^{2n-1}$ containing $\phi_2(L)$ in $\mathbb{C}^n$ so that $N_2:=J_{2}\xi_2$ is the inner unit normal vector field of $S^{2n-1}$.
\end{theorem}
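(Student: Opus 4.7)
The plan is to start from Oh's second variational formula \eqref{svf} applied to $\phi_1 := \phi : L \to \mathbb{C}H^n(-4)$ and then translate each of the four integrands into quantities living naturally on $\phi_2 := \Phi \circ \phi : L \to \mathbb{C}^n$ by invoking the comparison results in Lemmas \ref{c1}, \ref{c2}, and \ref{c3}. Since $\Phi$ is a symplectic diffeomorphism, any Hamiltonian deformation of $\phi_1$ with Hamiltonian $u \in C^{\infty}(L)$ corresponds under $\Phi$ to a Hamiltonian deformation of $\phi_2$ with the same function $u$, so H-stability of $\phi_1$ is encoded purely by the sign of the resulting integral, and the task reduces to rewriting the integrand.

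I would first handle the Ricci term. Since $\mathbb{C}H^n(-4)$ has constant holomorphic sectional curvature $-4$, its Ricci tensor satisfies $\mathrm{Ric} = -2(n+1)g$, hence $\rho(\nabla_1 u, J_1\nabla_1 u) = -2(n+1)|\nabla_1 u|_1^2$, so the Ricci contribution $-\rho(\nabla_1 u, J_1\nabla_1 u)$ becomes $+2(n+1)|\nabla_1 u|_1^2$. Then I would substitute term by term: Lemma \ref{c1}(d) expands $|\Delta_1 u|^2$ into $|\Delta_2 u|^2 + 2\tanh^2 r \, \Delta_2 u \cdot \xi_2\xi_2(u) + \tanh^4 r\,|\xi_2\xi_2(u)|^2$; Lemma \ref{c1}(b) converts $|\nabla_1 u|_1^2$ into $|\nabla_2 u|_2^2 - \tanh^2 r\,\xi_2(u)^2$; the first formula in Lemma \ref{c3} converts $2S_1(\nabla_1 u,\nabla_1 u, J_1H_1)$, where I also use the identity $S_2(X,X,J_2H_2) = -g_2(B_2(X,X), H_2)$ (from $J_2^2 = -\mathrm{id}$) to recognize the target term $-2g_2(B_2(\nabla_2 u,\nabla_2 u),H_2)$; finally, the second formula of Lemma \ref{c3}, after squaring, produces $J_1H_1(u)^2 = J_2H_2(u)^2 - 2\tfrac{\tanh r}{\cosh r}\xi_2(u) J_2H_2(u) + \tfrac{\tanh^2 r}{\cosh^2 r}\xi_2(u)^2$.

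The main bookkeeping step is to sum everything and verify that the expected cancellations occur. The $|\nabla_2 u|_2^2$ terms cancel between the Ricci contribution ($+2(n+1)$) and the conversion of $2S_1$ ($-2(n+1)$). The coefficient of $\xi_2(u)^2$ receives three contributions: $-2(n+1)\tanh^2 r$ from $|\nabla_1 u|_1^2$, $+2(n+\tanh^2 r)\tanh^2 r$ from $2S_1$, and $+\tanh^2 r/\cosh^2 r$ from the squared mean curvature term; using the identity $1-\tanh^2 r = \cosh^{-2} r$, these collapse to exactly $-\tanh^2 r/\cosh^2 r$, matching \eqref{svfre}. All the remaining cross terms ($\Delta_2 u \cdot \xi_2\xi_2(u)$, $\xi_2(u) J_2H_2(u)$, and $|\xi_2\xi_2(u)|^2$) already appear with the correct coefficients.

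Finally, the two induced volume forms are related by $dv_{g_1} = \cosh r\, dv_{g_2}$, which follows from the orthogonal splitting \eqref{deco2} since only the one-dimensional $\mathfrak{z}$-factor is rescaled by $\cosh^2 r$; as $\cosh r$ is a positive constant on $L$, this overall scalar does not affect the sign of the integral, yielding the stated equivalence. The only non-routine point is the algebraic simplification of the $\xi_2(u)^2$ coefficient, where three $\tanh$/$\cosh$-dependent contributions must recombine cleanly; everything else is direct substitution.
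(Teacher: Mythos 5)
Your proposal is correct and follows essentially the same route as the paper: apply Oh's second variational formula to $\phi_1$, use the Einstein constant $-2(n+1)$ of $\mathbb{C}H^n(-4)$ for the Ricci term, substitute via Lemmas \ref{c1} and \ref{c3}, check the cancellation of the $|\nabla_2u|_2^2$ terms and the collapse of the $\xi_2(u)^2$ coefficient to $-\tanh^2 r/\cosh^2 r$, and conclude with $dv_{g_1}=\cosh r\, dv_{g_2}$. The only cosmetic addition is your explicit remark that $S_2(X,X,J_2H_2)=-g_2(B_2(X,X),H_2)$, which the paper leaves implicit when passing between the form of the integrand in the proof and in the statement.
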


\begin{proof}
For  a function $u\in C^{\infty}(L)$, let $\phi_{1,s}$ and $\phi_{2,s}$ be a Hamiltonian deformation of $\phi_1: L\rightarrow S^{2n-1}_r\subset B^{n}$ and $\phi_2: L\rightarrow S^{2n-1}(\sinh r)\subset \mathbb{C}^n$ so that $d/ds|_{s=0}\phi_{\alpha,s}=J_{\alpha}\nabla_{\alpha}u$ for $\alpha=1,2$. We denote the integrand of the right hand side of the second variational formula \eqref{svf}  for $\phi_{\alpha,s}$  by $\mathcal{J}_{\alpha}(u)$. By Lemma \ref{c1} and \ref{c3},  we have
\begin{align}\label{c4}
\mathcal{J}_1(u)&=|\Delta_1u|^2+2(n+1)|\nabla_1u|^2_1+2S_1(\nabla_1u, \nabla_1u, J_1H_1)+J_1H_1(u)^2\\
&=|\Delta_2u+\tanh^2r \xi_2\xi_2(u)|^2+2(n+1)(|\nabla_2u|_2^2-\tanh^2r |\xi_2(u)|^2)\nonumber\\
&\quad +2S_2(\nabla_2u, \nabla_2u, J_2H_2)-2(n+1)|\nabla_2u|_2^2+2(n+\tanh^2r)\tanh^2r|\xi_2(u)|^2\nonumber\\
&\quad +\Big|J_2H_2(u)-\frac{\tanh r}{\cosh r}\xi_2(u)\Big{|}^2\nonumber\\
&=|\Delta_2u|^2+2S_2(\nabla_2u, \nabla_2u, J_2H_2)+J_2H_2(u)^2\nonumber\\
&\quad +2\tanh^2r\cdot \Delta_2u\cdot \xi_2\xi_2(u)-2\frac{\tanh r}{\cosh r}\xi_2(u)J_2H_2(u)\nonumber\\
&\quad +\tanh^4r|\xi_2\xi_2(u)|^2-\frac{\tanh^2r}{\cosh^2r}|\xi_2(u)|^2.\nonumber
\end{align}
On the other hand, one easily checked that the volume measure has a relation $dv_{g_1}=\cosh r\cdot dv_{g_2}$. Therefore, by integrating \eqref{c4} over $L$ by $dv_{g_1}$, we obtain the conclusion.
\end{proof}

We remark that the $C(K)$-invariant Lagrangian submanifold $L$ in $\mathbb{C}H^n$ is H-minimal if and only if so is the reduced Lagrangian submanifold $L/Z$ in $\mathbb{C}P^{n-1}$ (cf. \cite{Dong}). Moreover, a typical examples of H-minimal Lagrangian is obtained by a {\it compact homogeneous} Lagrangian submanifold in $\mathbb{C}H^n$, i.e. a Lagrangian orbit of $K'$-action for a connected compact subgroup $K'\subset K$.  Since $\Phi: B^{n}\rightarrow \mathbb{C}^n$ is a $K$-equivariant symplectic diffeomorphism, it turns out that any compact homogeneous Lagrangian submanifold in $\mathbb{C}H^n$ corresponds to a compact homogeneous Lagrangian submanifold in $\mathbb{C}^n$ (See Theorem 1 in \cite{HK}).  Theorem \ref{keyprop} is applicable to all such examples.

\section{The torus orbits in $\mathbb{C}H^n$}\label{sec4}
In this section, we consider the Hamiltonian stability of torus orbits in $\mathbb{C}H^n(-4)$, and give a proof of Theorem \ref{maintheorem}.  Let $T^n$ be a maximal torus of $K=S(U(1)\times U(n))$  represented by
\begin{align*}
T^n:=\{{\rm diag}(e^{-\sqrt{-1}\sum_{i=1}^n\theta_i},e^{\sqrt{-1}\theta_1},\ldots, e^{\sqrt{-1}\theta_n}); \theta_i \in\mathbb{R}\ \forall i=1,\ldots, n\}
\end{align*}
Since $\Phi$ is $K$-equivariant, it is easy to see that any $T^n$-orbit $T^n\cdot z$ through $z\in B^n\simeq  \mathbb{C}H^n$ corresponds to a standard $T^n$-orbit in $\mathbb{C}^n$ via the map $\Phi$:
\begin{align}\label{tori}
\Phi(T^n\cdot z)=T(r_1,\ldots, r_n):=\{(r_1e^{\sqrt{-1}\theta_1},\ldots r_ne^{\sqrt{-1}\theta_n});\theta_i\in \mathbb{R}\}\subset \mathbb{C}^n.
\end{align}
for some $(r_1,\ldots, r_n)\in (\mathbb{R}_{>0})^n$. Note that this correspondence is one to one. In particular, any  $T^n$-orbit in $\mathbb{C}H^n$ is Lagrangian since so is $T^n$-orbit in $\mathbb{C}^n$. Moreover, they are all H-minimal. Thus, by Theorem \ref{keyprop}, we consider a principal $T^n$-orbit in $\mathbb{C}^n$ in order to show the H-stability of $T^n$-orbit in $\mathbb{C}H^n$.

\subsection{Hamiltonian stability of torus orbits}
Let $S^{2n-1}(\sinh r)$ be the hypersphere of radius $\sinh r$ for $r\in(0,\infty)$ in $\mathbb{C}^n$. The Reeb vector field on $S^{2n-1}(\sinh r)$ is given by
\begin{align*}
\xi_2:=\frac{1}{\sinh r}\sum_{i=1}^n \partial_i,
\end{align*}
where $\partial_i$ is a tangent vector field on $S^{2n-1}$ defined by
\begin{align*}
\partial_i(z):=-y^i\frac{\partial}{\partial x^i}+x^i\frac{\partial}{\partial y^i}
\end{align*}
for $i=1,\ldots, n$, where $z^i=x^i+\sqrt{-1}y^i$.  
Note that $N:=J_0\xi=-{\bf p}$ is the inner unit normal vector field on $S^{2n-1}(\sinh r)$.

Let us consider the standard $T^n$-action on $\mathbb{C}^n$ so that the principal orbit is a Lagrangian torus given by \eqref{tori}.
A moment map $\mu: \mathbb{C}^n\rightarrow \mathbb{R}^n$ of the $T^n$-action on $\mathbb{C}^n$ is given by $\mu(z):=(-\frac{1}{2}|z_1|^2, \ldots, -\frac{1}{2}|z_n|^2)$ and  we identify the moment polytope $\mu(\mathbb{C}^n)$ with a quadrant
\begin{align*}
P:=\{(p_1,\ldots p_n)\in \mathbb{R}^n; p_i\geq 0\}, \quad \mu(z)\mapsto (|z_1|^2,\ldots, |z_n|^2).
\end{align*}
It is easy to see that the map $\mu$ gives rise to a one to one correspondence between principal $T^n$-orbits and the set of interior points $P^{int}$ of $P$. For each $r\in (0,\infty)$, we denote the set of torus orbits contained in $S^{2n-1}(\sinh r)$ by
\begin{align*}
\mathcal{O}_r:=\Big{\{}T(r_1,\ldots, r_n);\ \sum_{i=1}^nr_i^2=\sinh^2 r\Big{\}}
\end{align*}
By using the correspondence via the moment map, we have a correspondence
\begin{align*}
\mathcal{O}_r\xrightarrow{1:1} \Pi_r:=\Big{\{}(p_1,\ldots, p_n)\in P^{int};\ \sum_{i=1}^np_i=\sinh^2r\Big{\}}.
\end{align*}
Moreover, we parametrize $\mathcal{O}_r$ by
\begin{align}\label{param}
&\Big{\{}\widetilde{\bf s}:=(s_1,\ldots, s_n)\in (\mathbb{R}_{>0})^n;\ \sum_{i=1}^ns_i=1\Big{\}}=\Pi_{\sinh^{-1}(1)}\xrightarrow{\sim} \mathcal{O}_r,\\
&(s_1,\ldots s_n)\quad\mapsto\quad T_{r, \widetilde{\bf s}}^n:=S^1(\sinh r\sqrt{s_1})\times \cdots \times S^1(\sinh r\sqrt{s_n}).\nonumber
\end{align}

 We take a basis of $T_zT^n_{r, \widetilde{\bf s}}$ by
\begin{align*}
\frac{\partial}{\partial \theta_i}\Big{|}_z=-y^i\frac{\partial}{\partial x^i}\Big{|}_z+x^i\frac{\partial}{\partial y^i}\Big{|}_z=\partial_i|_z
\end{align*}
for $i=1,\ldots, n$ and $z\in T^n_{r, \widetilde{\bf s}}$. Note that we have $g(\partial_i, \partial_j)=(\sinh^2r\cdot s_i)\delta_{ij}$.
Then, one easily computes  the second fundamental form and the mean curvature vector of $T^n_{r, \widetilde{\bf s}}$ in $\mathbb{C}^n$ as follows:
\begin{align}\label{bh}
B_2(\partial_i, \partial_j)=\delta_{ij}J\partial_i\quad {\rm and}\quad H_2=\sum_{i=1}^n \frac{1}{\sinh^2r\cdot s_i}J\partial_i,
\end{align}
respectively.

\begin{lemma}\label{le1}
The torus orbit $\Phi^{-1}(T^n_{r, \widetilde{\bf s}})$ is Hamiltonian-stable in $\mathbb{C}H^n(-4)$ if and only if
\begin{align}\label{stab}
Q_{n,r}(\widetilde{\bf s},{\bf m}):=a_1(\widetilde{\bf s},{\bf m})-2\tanh^2r\cdot a_2(\widetilde{\bf s},{\bf m})+\tanh^4r\cdot a_3({\bf m})\geq 0
\end{align}
 for any ${\bf m}=(m_1,\ldots, m_n)\in \mathbb{Z}^n\setminus \{\bf 0\}$, where
\begin{align*}
a_1(\widetilde{\bf s},{\bf m})&:=\Big(\sum_{i=1}^n\frac{m_i^2}{s_i}\Big)^2+\Big(\sum_{i=1}^n\frac{m_i}{s_i}\Big)^2-2\Big(\sum_{i=1}^n\frac{m_i^2}{s_i^2}\Big)=\sum_{i=1}^n\frac{m_i^2(m_i^2-1)}{s_i^2}+\sum_{i\neq j}\frac{m_im_j(m_im_j+1)}{s_is_j},\\
a_2(\widetilde{\bf s},{\bf m})&:=\Big(\sum_{i=1}^nm_i\Big)\Big{\{}\Big(\sum_{i=1}^n \frac{m_i^2}{s_i}\Big)\Big(\sum_{i=1}^nm_i\Big)-\Big(\sum_{i=1}^n \frac{m_i}{s_i}\Big)\Big{\}},\nonumber\\
a_3({\bf m})&:=\Big(\sum_{i=1}^n m_i\Big)^2\Big{\{}\Big(\sum_{i=1}^n m_i\Big)^2-1\Big{\}}.\nonumber
\end{align*}
 \end{lemma}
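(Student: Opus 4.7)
By Theorem~\ref{keyprop} (and H-minimality of every $T^n$-orbit), the H-stability of $\Phi^{-1}(T^n_{r,\widetilde{\bf s}})$ in $\mathbb{C}H^n(-4)$ is equivalent to nonnegativity of the quadratic form \eqref{svfre} on $C^\infty(T^n_{r,\widetilde{\bf s}})$. The plan is to diagonalize this form using the Fourier basis $\{u_{\bf m}:=e^{\sqrt{-1}\sum_i m_i\theta_i}\}_{{\bf m}\in\mathbb{Z}^n}$ on the flat torus $T^n_{r,\widetilde{\bf s}}$. Since each differential operator appearing in \eqref{svfre}---namely $\Delta_2$, $\nabla_2$, $\xi_2$, and the first-order operator $J_2 H_2=-\sum_i(\sinh^2 r\, s_i)^{-1}\partial_i$---has constant coefficients in the frame $\{\partial_i\}$, the $u_{\bf m}$ are simultaneous eigenfunctions. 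Extending the real quadratic form to a Hermitian form on complex-valued functions in the standard way, it diagonalizes along the Fourier decomposition, and nonnegativity reduces to checking the sign of the eigenvalue on each $u_{\bf m}$ with ${\bf m}\neq{\bf 0}$ (the zero mode corresponds to a constant $u$, hence to the trivial Hamiltonian deformation).

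The main computation is then to evaluate these eigenvalues. Using $g_2(\partial_i,\partial_j)=\sinh^2 r\, s_i\,\delta_{ij}$ together with \eqref{bh}, one obtains
\begin{align*}
\Delta_2 u_{\bf m}&=\frac{1}{\sinh^2 r}\Bigl(\sum_i\frac{m_i^2}{s_i}\Bigr)u_{\bf m},\qquad \xi_2 u_{\bf m}=\frac{\sqrt{-1}}{\sinh r}\Bigl(\sum_i m_i\Bigr)u_{\bf m},\\
J_2 H_2 u_{\bf m}&=-\frac{\sqrt{-1}}{\sinh^2 r}\Bigl(\sum_i\frac{m_i}{s_i}\Bigr)u_{\bf m},
\end{align*}
together with $g_2\bigl(B_2(\nabla_2 u_{\bf m},\overline{\nabla_2 u_{\bf m}}),H_2\bigr)=\sinh^{-4}r\,|u_{\bf m}|^2\sum_i m_i^2/s_i^2$, which follows from \eqref{bh} and the identity $g_2(J\partial_i,H_2)=1$ (immediate from $g_2(J\partial_i,J\partial_j)=g_2(\partial_i,\partial_j)$). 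Substituting into the Hermitian extension of the integrand of \eqref{svfre} and multiplying through by $\sinh^4 r\,|u_{\bf m}|^{-2}$, the six terms group by powers of $\tanh r$: the $\tanh^0$-part assembles into $a_1(\widetilde{\bf s},{\bf m})$, the two $\tanh^2$-contributions (from $\Delta_2 u\cdot\xi_2\xi_2 u$ and $\xi_2 u\cdot J_2 H_2 u$) combine to $-2\,a_2(\widetilde{\bf s},{\bf m})$, and the two $\tanh^4$-contributions (from $|\xi_2\xi_2 u|^2$ and $|\xi_2 u|^2$) combine to $a_3({\bf m})$; thus the eigenvalue on $u_{\bf m}$ equals $\sinh^{-4}r\cdot Q_{n,r}(\widetilde{\bf s},{\bf m})$. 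The alternative expression for $a_1$ in the statement follows by direct algebraic expansion of $(\sum m_i^2/s_i)^2+(\sum m_i/s_i)^2-2\sum m_i^2/s_i^2$.

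The main obstacle is the careful sign-bookkeeping. The eigenvalues of $\xi_2$ and $J_2 H_2$ are purely imaginary, so the Hermitian extensions of the cross terms $\Delta_2 u\cdot\xi_2\xi_2 u$ and $\xi_2 u\cdot J_2 H_2 u$ must be formed with the correct placement of conjugations so that their contributions become real; one must then verify that the various $\tanh r$-powers reassemble exactly into the quartic polynomials $a_1,a_2,a_3$ with no residual terms.
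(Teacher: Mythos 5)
Your proposal is correct and follows essentially the same route as the paper: reduce via Theorem~\ref{keyprop}, diagonalize the quadratic form over the Fourier modes of the flat torus (the paper uses the real eigenfunctions $u_{\bf m}^c,u_{\bf m}^s$ and integration by parts, where you use complex exponentials and pointwise eigenvalues, which is equivalent), and group the resulting terms by powers of $\tanh r$ to obtain $a_1-2\tanh^2r\,a_2+\tanh^4r\,a_3$. Your eigenvalue computations and the sign bookkeeping for the imaginary eigenvalues of $\xi_2$ and $J_2H_2$ check out against the paper's \eqref{s1}--\eqref{s3}.
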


\begin{proof}
We set $r_i:=\sinh r\cdot \sqrt{s_i}$ so that $T^n_{r,\widetilde{\bf s}}=S^{1}(r_1)\times\cdots\times S^1(r_n)$ in this proof.

We decompose the integrand of the formula \eqref{svfre} into three parts
\begin{align*}
&\underbrace{|\Delta_2 u|^2-2g_2(B_2(\nabla_2u, \nabla_2u), H_2)+ J_2 H_2(u)^2}_{\rm (I)}\\
&\quad +\underbrace{\Big{\{}c_1(r)\cdot \xi_2\xi_2(u)\cdot \Delta_2u-c_2(r)\cdot\xi_2(u)J_2H_2(u)\Big{\}}}_{\rm (I\hspace{-1pt}I)}+\underbrace{\Big{\{}c_3(r)|\xi_2\xi_2(u)|^2-c_4(r)\xi_2(u)^2\Big{\}}}_{\rm(I\hspace{-1pt}I\hspace{-1pt}I)},
\end{align*}
 where we set
\begin{align*}
c_1(r):&=2\tanh^2r,\quad c_2(r):=\frac{2\tanh r}{\cosh r},\\
c_3(r):&=\tanh^4r,\quad c_4(r):=\frac{\tanh^2r}{\cosh^2r}.
\end{align*}

Since the integral of ${\rm(I)}$ coincides with the second variation of $T^n_{r,\widetilde{\bf s}}$ in $\mathbb{C}^n$, the same calculation given in \cite{Oh2} (see (29) in \cite{Oh2}) shows that 
\begin{align}\label{s1}
\int_{T^n_{r,\widetilde{\bf s}}}{\rm (I)}dv_{g_2}=\int_{T^n_{r,\widetilde{\bf s}}}\Big{\{}\sum_{i=1}^n\frac{1}{r_i^4}(\partial_i^4u+\partial_i^2u)+\sum_{i\neq j}\frac{1}{r_i^2r_j^2}(\partial_i^2\partial_j^2u-\partial_i\partial_ju)\Big{\}}udv_{g_2}.
\end{align}
Next, we calculate (I\hspace{-1pt}I). A straightforward calculation shows that
\begin{align}\label{s2}
\int_{T^n_{r,\widetilde{\bf s}}}{\rm (I\hspace{-1pt}I)}dv_{g_2}
&=\int_{T^n_{r,\widetilde{\bf s}}} \frac{c_1(r)}{\sinh^2r}\Big(\sum_{i,j=1}^n \partial_i\partial_ju\Big)\Big(-\sum_{k=1}^n\frac{1}{r_k^2}\partial_k^2u\Big)-\frac{c_2(r)}{\sinh r}\Big(\sum_{i=1}^n\partial_iu\Big)\Big(-\sum_{k=1}^n\frac{1}{r_k^2}\partial_k u\Big)dv_{g_2}\\
&=-\frac{2}{\cosh^2r}\int_{T^n_{r,\widetilde{\bf s}}} \Big{\{}\sum_{i,j,k=1}^n \frac{1}{r_k^2}\partial_k^2\partial_i\partial_ju+\sum_{i,k=1}^n\frac{1}{r_k^2}\partial_k\partial_iu\Big{\}}udv_{g_2} \nonumber
%&=-\int_{T^n_{\bf r}} \Big{\{}\sum_{k=1}^n \frac{1}{r_k^2}\Big(\sum_{i,j=1}^n\partial_k^2\partial_i\partial_ju+\sum_{i=1}^n\partial_k\partial_iu\Big)\Big{\}}udv_{g_2}. \nonumber
\end{align}
Here, we used the integration by parts. Finally, we see
\begin{align}\label{s3}
\int_{T^n_{r,\widetilde{\bf s}}}{\rm (I\hspace{-1pt}I\hspace{-1pt}I)}dv_{g_2}
&=\int_{T^n_{r,\widetilde{\bf s}}} \frac{c_3(r)}{\sinh^4r}\Big|\sum_{i,j=1}^n \partial_i\partial_j u\Big|^2-\frac{c_4(r)}{\sinh^2 r}\Big(\sum_{i=1}^n \partial_iu\Big)^2dv_{g_2}\\
%&=\int_{T^n_{\bf r}} \sum_{i,j,k,l=1}^n (\partial_i\partial_j u)(\partial_k\partial_l u)-\sum_{i,j=1}^n (\partial_iu)(\partial_ju)dv_{g_2}\nonumber\\
&=\frac{1}{\cosh^4r}\int_{T^n_{r,\widetilde{\bf s}}} \Big{\{} \sum_{i,j,k,l=1}^n \partial_k\partial_l\partial_i\partial_j u+\sum_{i,j=1}^n \partial_j\partial_iu\Big{\}}udv_{g_2}.\nonumber
\end{align}

Recall that the non-zero eigenvalues of $\Delta$ on the flat torus $T^n_{r,\widetilde{\bf s}}$ are given by
\begin{align*}
\lambda_{\bf m}=\sum_{i=1}^n \frac{m_i^2}{r_i^2}\quad {\rm for}\quad {\bf m}:=(m_1,\ldots, m_n)\in \mathbb{Z}^n\setminus \{{\bf 0}\},
\end{align*}
and the corresponding eigenspace is spanned by
\begin{align*}
u_{\bf m}^c:=\cos \Big(\sum_{i=1}^n m_i \theta_i\Big)\quad {\rm and}\quad u_{\bf m}^s:=\sin \Big(\sum_{i=1}^n m_i \theta_i\Big).
\end{align*}
It is known that these functions form  an orthogonal basis of $L^2(T^n_{r,\widetilde{\bf s}})$. Note that 
\begin{align*}
\partial_i\partial_j u_{\bf m}^c=-m_im_ju_{\bf m}^c,\quad \partial_i\partial_j \partial_k\partial_lu_{\bf m}^c&=m_im_jm_km_lu_{\bf m}^c,
\end{align*}
and $u_{\bf m}^s$ is as well. Hence, substituting $u=u_{\bf m}^c$ (or $u_{\bf m}^s$) in \eqref{s1}, \eqref{s2} and \eqref{s3}, we have
\begin{align*}
\int_{T^n_{r,\widetilde{\bf s}}} {\rm (I)}+{\rm (I\hspace{-1pt}I)}+{\rm (I\hspace{-1pt}I\hspace{-1pt}I)}dv_{g_2}
&=\int_{T^n_{r,\widetilde{\bf s}}}\Big{[}\sum_{i=1}^n\frac{1}{r_i^4}(m_i^4-m_i^2)+\sum_{i\neq j}\frac{1}{r_i^2r_j^2}(m_i^2m_j^2+m_im_j)\\
&\quad\quad-\frac{2}{\cosh^2r}\Big{\{}\sum_{i,j,k=1}^n\frac{m_k^2}{r_k^2}m_im_j-\sum_{i,k=1}^n\frac{m_k}{r_k^2}m_i\Big{\}}\\
&\quad\quad+\frac{1}{\cosh^4r}\Big{\{}\sum_{i,j,k,l=1}^n m_km_lm_im_j -\sum_{i,j=1}^n m_jm_i\Big{\}}\Big{]}(u_{\bf m}^c)^2dv_{g_2}\\
&=\frac{1}{\sinh^4r}\int_{T^n_{r, \widetilde{\bf s}}} Q_{n,r}(\widetilde{\bf s}, {\bf m})(u_{\bf m}^c)^2dv_{g_2}
\end{align*}
since  we set $r_i=\sinh r\cdot \sqrt{s_i}$, the implies the lemma. 
\end{proof}

Note that the coefficients $a_1(\widetilde{\bf s}, {\bf m})$, $a_2(\widetilde{\bf s}, {\bf m})$ and $a_3({\bf m})$ are all non-negative. We shall estimate $Q_{n,r}(\widetilde{\bf s}, {\bf m})$ in the following.

First of all, we consider a specific ${\bf m}$, namely, we suppose ${\bf m}\in \mathbb{Z}^n\setminus\{0\}$ satisfies
\begin{align*}
a_3({\bf m})=0,\ \textup{or equivalently},\ \sum_{i=1}^n m_i=0\ {\rm or}\ \pm1.
\end{align*}
We shall find a necessary condition for  the Hamiltonian stability of $T_{r, \widetilde{\bf s}}^n$ when $n\geq 3$ for such an ${\bf m}$ (Proposition \ref{p3}), which leads a proof of the first assertion of Theorem \ref{maintheorem} (b).

\begin{lemma}\label{le2}
If $\sum_{i=1}^nm_i=0$, then $Q_{n,r}(\widetilde{\bf s}, {\bf m})\geq 0$ and the equality holds if and only if there exist $i,j\in \{1,\ldots, n\}$ so that $m_i=1$, $m_j=-1$ and $m_k=0$ for other $k$.
\end{lemma}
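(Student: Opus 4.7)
The plan is to exploit the given factorized form of $a_1$ together with the hypothesis $\sum_i m_i = 0$, which immediately kills the other two terms of $Q_{n,r}$.

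First I would observe that $a_2(\widetilde{\bf s}, {\bf m})$ and $a_3({\bf m})$ both contain $\sum_{i=1}^n m_i$ as an explicit factor, so under the hypothesis they vanish, leaving $Q_{n,r}(\widetilde{\bf s}, {\bf m}) = a_1(\widetilde{\bf s}, {\bf m})$. So the entire lemma reduces to showing
\begin{equation*}
a_1(\widetilde{\bf s}, {\bf m}) = \sum_{i=1}^n \frac{m_i^2(m_i^2 - 1)}{s_i^2} + \sum_{i \neq j} \frac{m_i m_j(m_i m_j + 1)}{s_i s_j} \geq 0,
\end{equation*}
with the advertised equality case.

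The key observation is that every term in this sum is individually non-negative because $\bf m$ is an \emph{integer} vector: for each $i$, since $m_i \in \mathbb{Z}$, either $m_i = 0$ or $m_i^2 \geq 1$, so $m_i^2(m_i^2 - 1) \geq 0$; and for any $i \neq j$, setting $N = m_i m_j \in \mathbb{Z}$, the quantity $N(N+1)$ is non-negative for every integer $N$. Since $s_i > 0$, each summand in $a_1$ is non-negative, proving $a_1 \geq 0$ and hence $Q_{n,r} \geq 0$.

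For the equality case I would trace back when every summand vanishes. The diagonal terms force $m_i \in \{-1, 0, 1\}$ for all $i$, and the off-diagonal terms force $m_i m_j \in \{-1, 0\}$ for every pair $i \neq j$. The latter prohibits two nonzero $m_i$'s of the same sign, so at most two of the $m_i$'s are nonzero; if exactly two are nonzero they must be $+1$ and $-1$; if fewer than two are nonzero then the constraint $\sum_i m_i = 0$ (together with ${\bf m} \neq {\bf 0}$) is violated. Hence equality holds precisely when exactly one $m_i$ equals $+1$, one equals $-1$, and all others vanish. No serious obstacle is expected here; the main point is simply recognizing that $N(N+1) \geq 0$ for integer $N$, which is what makes the chosen factorization of $a_1$ work.
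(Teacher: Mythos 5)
Your proposal is correct and follows essentially the same route as the paper: both note that $a_2$ and $a_3$ carry the factor $\sum_i m_i$ and hence vanish, reducing the claim to the termwise non-negativity of $a_1$ (using $N(N+1)\geq 0$ for integers $N$), and both characterize equality by forcing every summand of $a_1$ to vanish. Your equality-case analysis is, if anything, slightly more explicit than the paper's.
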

\begin{proof}
Suppose $\sum_{i=1}^nm_i=0$. Then the latter two terms in the RHS of \eqref{stab} vanish, and hence, $Q_{n,r}(\widetilde{\bf s}, {\bf m})=a_1(\widetilde{\bf s}, {\bf m})\geq 0$ by Lemma \ref{le1}. Moreover, the equality hods if and only if $m_i^2(m_i^2-1)=0$ for any $i$ and $m_im_j(m_im_j+1)=0$ for any $i\neq j$. This is equivalent to that ${\bf m}\in \mathbb{Z}^n\setminus \{0\}$ has the form $m_i=1$, $m_j=-1$ for some $i\neq j$ and $m_k=0$ for other $k$.
\end{proof}

Next, we consider the case when $\sum_{i=1}^n m_i=\pm1$. Since $Q_{n,r}(\widetilde{\bf s}, {\bf m})=Q_{n,r}(\widetilde{\bf s}, -{\bf m})$, we may assume $\sum_{i=1}^n m_i=1$ for our purpose.  We denote such ${\bf m}$ by $\widetilde{\bf m}$. 
In this case, the last term in \eqref{stab} is vanishing and we have 
\begin{align*}
&Q_{n,r}(\widetilde{\bf s}, \widetilde{\bf m})=\sum_{i=1}^n\frac{m_i^2(m_i^2-1)}{s_i^2}+\sum_{i\neq j}\frac{m_im_j(m_im_j+1)}{s_is_j}-2\tanh^2r\cdot \sum_{i=1}^n \frac{m_i(m_i-1)}{s_i}\\
&\quad \textup{for}\ (\widetilde{\bf s}, \widetilde{\bf m})\in \Big{\{}({\bf s}, {\bf m})\in \mathbb{R}_{>0}^n\times \mathbb{Z}^n;\  \sum_{i=1}^ns_i=1,\ \sum_{i=1}^nm_i=1\Big{\}}.
\end{align*}

\begin{lemma}\label{le3}
If $m_i\neq -1$ for any $i$, then $Q_{n,r}(\widetilde{\bf s}, \widetilde{\bf m})\geq 0$ and the equality holds if and only if $\widetilde{\bf m}$ is of the form $m_i=1$ for some $i$ and $m_k= 0$ for other $k\neq i$.
\end{lemma}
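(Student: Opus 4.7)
My plan is to rewrite $Q_{n,r}(\widetilde{\bf s}, \widetilde{\bf m})$ as a termwise manifestly non-negative expression. The key algebraic observation, which I would exploit first, is the factorization
$$m_i^2(m_i^2 - 1) = m_i(m_i-1) \cdot m_i(m_i+1).$$
This lets me absorb the linear correction $-2\tanh^2 r \sum_i m_i(m_i-1)/s_i$ into the diagonal quartic sum, giving (using $\sum m_i = 1$, so that $a_3 = 0$)
$$Q_{n,r}(\widetilde{\bf s}, \widetilde{\bf m}) = \sum_{i=1}^n \frac{m_i(m_i-1)\bigl[m_i(m_i+1) - 2\tanh^2 r \cdot s_i\bigr]}{s_i^2} + \sum_{i \neq j} \frac{m_im_j(m_im_j+1)}{s_is_j}.$$

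Next, I would show that each piece is non-negative term-by-term. For the diagonal sum, $m_i(m_i - 1) \geq 0$ automatically for any integer $m_i$. If $m_i \in \{0, 1\}$ the summand vanishes. If $|m_i| \geq 2$, the only other possibility under the hypothesis (since $m_i = -1$ is forbidden), a direct check gives $m_i(m_i+1) \geq 2$; combined with the strict bounds $s_i < 1$ (from $\sum s_j = 1$ and $s_j > 0$) and $\tanh^2 r < 1$, one has $2\tanh^2 r \cdot s_i < 2$, so the bracket is strictly positive. For the off-diagonal sum, an integer product satisfies $m_im_j(m_im_j+1) \geq 0$ unless $m_im_j = -1$; but $m_im_j = -1$ forces one of $m_i, m_j$ to equal $-1$, which is excluded.

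Putting these together gives $Q_{n,r}(\widetilde{\bf s}, \widetilde{\bf m}) \geq 0$. For equality, every diagonal summand and every off-diagonal summand must vanish. Diagonal vanishing forces $m_i \in \{0, 1\}$ for all $i$ (since when $|m_i| \geq 2$ the summand is strictly positive); off-diagonal vanishing forces $m_im_j = 0$ for all $i \neq j$, so at most one $m_i$ is non-zero. Combined with $\sum m_i = 1$, this yields exactly one $m_i = 1$ and all others zero, which is the claimed equality case. The main obstacle is spotting the factorization; once found, no sharp analytic inequality is needed — only the trivial bounds $s_i < 1$ and $\tanh^2 r < 1$ are used, which reflects why the hypothesis $m_i \neq -1$ is precisely the right one.
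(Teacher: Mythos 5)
Your proof is correct and follows essentially the same route as the paper: the same rearrangement of $Q_{n,r}(\widetilde{\bf s},\widetilde{\bf m})$ into the diagonal sum $\sum_i m_i(m_i-1)\{m_i(m_i+1)-2\tanh^2r\cdot s_i\}/s_i^2$ plus the off-diagonal sum, followed by the same termwise non-negativity and equality analysis. (Minor quibble: $m_im_j(m_im_j+1)\geq 0$ holds for \emph{all} integers, including $m_im_j=-1$ where it vanishes, so your caveat there is unnecessary — though harmless.)
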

\begin{proof}
$Q_{n,r}(\widetilde{\bf s}, \widetilde{\bf m})$ is rearranged as
\begin{align}\label{q12}
\sum_{i=1}^n\frac{m_i(m_i-1)\{m_i(m_i+1)-2\tanh^2r\cdot s_i\}}{s_i^2}+\sum_{i\neq j}\frac{m_im_j(m_im_j+1)}{s_is_j}.
\end{align}
 In \eqref{q12}, the second term is non-negative and the coefficient of $1/s_i^2$ is non-negative whenever  $m_i\neq -1$ since $0< \tanh^2r\cdot s_i<1$. Therefore, ${Q}_{n,r}(\widetilde{\bf s}, \widetilde{\bf m})\geq 0$ if $m_i\neq -1$ for all $i$. Here, the equality holds if and only if $m_i(m_i-1)=0$ for any $i$ and $m_im_j(m_im_j+1)=0$ for any $i\neq j$. Since we assume $\sum_{i=1}^n m_i=1$, this is equivalent to  $m_i=1$ for some $i$ and $m_k=0$ for other $k\neq i$.
  \end{proof}

By Lemma \ref{le3}, we restrict our attention to the case when $\widetilde{\bf m}$ has the form
\begin{align}\label{mr}
\widetilde{\bf m}=( \underbrace{-1,\ldots,-1}_{\alpha}, \underbrace{m_{\alpha+1},\ldots, m_{n}}_{n-\alpha})
\end{align}
for $m_{\alpha+1},\ldots, m_n\in \mathbb{Z}\setminus\{-1\}$ and $\alpha=1,\ldots n$. Here, we replaced the indices (if necessary)  so that $\widetilde{\bf m}$ has the form \eqref{mr}.  

\begin{lemma}\label{le4}
If $\widetilde{\bf m}$ has the form \eqref{mr} and $|m_i|>1$ for some $i\in \{\alpha+1,\ldots, n\}$, then $Q_{n,r}(\widetilde{\bf s},\widetilde{\bf m})>0$.
\end{lemma}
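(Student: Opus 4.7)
The plan is to start from the rearranged expression \eqref{q12} for $Q_{n,r}(\widetilde{\bf s},\widetilde{\bf m})$ obtained in the proof of Lemma \ref{le3}, and split every sum according to whether an index is in the ``$-1$-block'' $\{1,\ldots,\alpha\}$ or in the ``free block'' $\{\alpha+1,\ldots,n\}$. The goal is to isolate the only source of negativity, show it is $-4\tanh^2r\sum_{i\le \alpha}1/s_i$, and then absorb it using one particular family of off-diagonal cross terms whose positivity is guaranteed by the hypothesis that some $|m_i|>1$ for $i>\alpha$.

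First I would compute the diagonal contributions in \eqref{q12}. For $i\le\alpha$ one has $m_i=-1$, so $m_i(m_i-1)=2$ and $m_i(m_i+1)=0$; the diagonal term equals $-4\tanh^2 r/s_i$. For $i>\alpha$, since $m_i\in\mathbb{Z}\setminus\{-1\}$, both $m_i(m_i-1)\ge 0$ and $m_i(m_i+1)\ge 0$, and $2\tanh^2 r\cdot s_i<2\le m_i(m_i+1)$ whenever $m_i(m_i-1)>0$; hence the diagonal contribution from the free block is non-negative. Next, for the cross terms I would handle three cases: (i) both indices in the $-1$-block give the manifestly non-negative contribution $2\sum_{i\ne j\le\alpha}1/(s_is_j)$; (ii) exactly one index in each block yields, after the computation $m_im_j(m_im_j+1)=(-m_j)(1-m_j)=m_j(m_j-1)$, the contribution
\begin{equation*}
2\Bigl(\sum_{i\le\alpha}\tfrac{1}{s_i}\Bigr)\Bigl(\sum_{j>\alpha}\tfrac{m_j(m_j-1)}{s_j}\Bigr);
\end{equation*}
(iii) both indices in the free block give $m_im_j(m_im_j+1)\ge 0$, because $m_im_j\in\mathbb{Z}$ cannot equal $-1$ (otherwise one of $m_i,m_j$ would be $-1$, contradicting the form \eqref{mr}).

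Putting the pieces together and setting $A:=\sum_{i\le\alpha}1/s_i>0$ and $B:=\sum_{j>\alpha}m_j(m_j-1)/s_j\ge 0$, the above decomposition yields
\begin{equation*}
Q_{n,r}(\widetilde{\bf s},\widetilde{\bf m})\ \ge\ -4\tanh^2 r\cdot A+2AB\ =\ 2A\bigl(B-2\tanh^2 r\bigr).
\end{equation*}
The hypothesis $|m_i|>1$ for some $i_0>\alpha$ means $m_{i_0}\in\mathbb{Z}\setminus\{-1\}$ with $|m_{i_0}|\ge 2$, whence $m_{i_0}(m_{i_0}-1)\ge 2$; combined with $0<s_{i_0}<1$ this gives $B\ge 2/s_{i_0}>2>2\tanh^2 r$. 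Therefore $Q_{n,r}(\widetilde{\bf s},\widetilde{\bf m})>0$.

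The only real obstacle is the bookkeeping in case (iii) of the cross terms: one must rule out $m_im_j=-1$ using the exclusion $m_i,m_j\ne -1$, and confirm that $k(k+1)\ge 0$ for every integer $k\ne -1$. Once this routine check is made, the remainder of the argument is essentially the observation that the single term coming from the index $i_0$ with $|m_{i_0}|\ge 2$ produces $B\ge 2/s_{i_0}>2$, which safely beats the factor $2\tanh^2r<2$.
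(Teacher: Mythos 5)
Your proof is correct and follows essentially the same route as the paper: both start from the rearranged expression \eqref{q12}, isolate the negative diagonal contribution $-4\tanh^2 r\sum_{i\le\alpha}1/s_i$ from the $-1$-block, convert the mixed cross terms to $m_j(m_j-1)/(s_is_j)$, and conclude from $m_{i_0}(m_{i_0}-1)/s_{i_0}\ge 2/s_{i_0}>2>2\tanh^2 r$. The only cosmetic difference is that the paper also retains the manifestly positive term $\sum_{k<l\le\alpha}4/(s_ks_l)$ (needed later in Proposition \ref{p3}), which you correctly discard here.
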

\begin{proof}
Since $m_i=-1$ for $i=1,\ldots, \alpha$ and $m_j\neq -1$ for $j=\alpha+1,\ldots, n$, the equation \eqref{q12} shows
\begin{align}
Q_{n,r}(\widetilde{\bf s},\widetilde{\bf m})&\geq \sum_{i=1}^{\alpha} \frac{-4\tanh^2r}{s_i}+2\sum_{i<j}\frac{m_im_j(m_im_j+1)}{s_is_j}\nonumber\\
&\geq \sum_{i=1}^{\alpha} \frac{-4\tanh^2r}{s_i}+2\sum_{i=1}^{\alpha}\sum_{j=\alpha+1}^n\frac{m_j(m_j-1)}{s_is_j}+\sum_{1<k<l<\alpha}\frac{4}{s_ks_l}\nonumber\\
&=2\Big(\sum_{i=1}^{\alpha} \frac{1}{s_i}\Big)\Big{\{}-2\tanh^2r+\sum_{j=\alpha+1}^{n}\frac{m_j(m_j-1)}{s_j}\Big{\}}+\sum_{1\leq k<l<\alpha}\frac{4}{s_ks_l}\label{q13}
\end{align}
Therefore, if there exists $m_j$ for $j=\alpha+1,\ldots, n$ satisfying  $|m_j|>1$, then  $m_j(m_j-1)/s_j>2>2\tanh^2r$, and hence, ${Q}_{n,r}(\widetilde{\bf s},\widetilde{\bf m})>0$.  
\end{proof}

Combining this lemma with $\sum_{i=1}^nm_i=1$, the remaining case is when 
\begin{align}\label{mr2}
\widetilde{\bf m}=( \underbrace{-1,\ldots,-1}_{\alpha},\underbrace{1,\ldots, 1}_{\alpha+1}, \underbrace{0,\ldots, 0}_{n-(2\alpha+1)})
\end{align}
for $\alpha=1,\ldots, [n/2]$.  Note that there is no $\widetilde{\bf m}$ of the form \eqref{mr2} for $n\leq 2$.

\begin{proposition}\label{p3}
Suppose $n\geq 3$ and $\widetilde{\bf m}$ is of the form \eqref{mr2}. Then, $Q_{n,r}(\widetilde{\bf s},\widetilde{\bf m})\geq 0$ if and only if
\begin{align}\label{ineq}
s_i\geq \tanh^2r\cdot s_js_k 
\end{align}
holds for any distinct $i,j,k\in\{1,\ldots, n\}$ with $\sum_{i=1}^ns_i=1$.
In particular, there exist infinitely many H-unstable torus  in $\mathbb{C}H^n$ when $n\geq 3$.
\end{proposition}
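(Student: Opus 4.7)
The plan is to expand $Q_{n,r}(\widetilde{\bf s},\widetilde{\bf m})$ into closed form for $\widetilde{\bf m}$ of shape \eqref{mr2}, extract the inequality \eqref{ineq} from the single case $\alpha=1$, and then use a short averaging argument to show this one family of inequalities already forces $Q_{n,r}\geq 0$ for every admissible $\alpha$. Write $A:=\{1,\dots,\alpha\}$ and $B:=\{\alpha+1,\dots,2\alpha+1\}$. Because every entry $m_i$ lies in $\{-1,0,1\}$, the diagonal contribution $m_i^2(m_i^2-1)$ is identically zero, and $m_im_j(m_im_j+1)$ equals $2$ precisely when $i\neq j$ lie in a common block ($A$ or $B$) and vanishes otherwise; since also $\sum m_i=1$, the coefficient $a_3$ in \eqref{stab} vanishes and $a_2$ collapses to $2\sum_{i\in A}1/s_i$. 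Substituting into Lemma \ref{le1} yields
\begin{equation*}
Q_{n,r}(\widetilde{\bf s},\widetilde{\bf m})=4\sum_{\substack{i<j\\ i,j\in A}}\frac{1}{s_is_j}+4\sum_{\substack{i<j\\ i,j\in B}}\frac{1}{s_is_j}-4\tanh^2r\sum_{i\in A}\frac{1}{s_i}.
\end{equation*}

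For necessity I would specialize to $\alpha=1$: with $A=\{i\}$, $B=\{j,k\}$, the first sum is empty and the expression collapses to $4(s_i-\tanh^2r\cdot s_js_k)/(s_is_js_k)$, so $Q_{n,r}\geq 0$ forces $s_i\geq\tanh^2r\cdot s_js_k$; letting $(i,j,k)$ range over all distinct triples by permuting positions of the $\pm1$ entries yields \eqref{ineq}. For sufficiency, fix $\alpha\geq 1$ and assume \eqref{ineq}. For each $i\in A$ and each pair $\{j,k\}\subset B$ (automatically $i\neq j,k$), \eqref{ineq} gives $\tanh^2r/s_i\leq 1/(s_js_k)$. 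Summing over the $\binom{\alpha+1}{2}$ pairs in $B$ and then over the $\alpha$ elements of $A$ produces
\begin{equation*}
\tanh^2r\sum_{i\in A}\frac{1}{s_i}\leq\frac{2}{\alpha+1}\sum_{\substack{j<k\\ j,k\in B}}\frac{1}{s_js_k}\leq\sum_{\substack{j<k\\ j,k\in B}}\frac{1}{s_js_k},
\end{equation*}
the last step using $\alpha\geq 1$; combined with the manifestly non-negative $A$-pair sum, this gives $Q_{n,r}\geq 0$. Note that the averaging factor $2/(\alpha+1)$ equals $1$ exactly at $\alpha=1$, which confirms that $\alpha=1$ is the extremal case and already captures the entire content of the iff.

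For the final assertion, for any fixed $n\geq 3$ and $r>0$ one may choose $\widetilde{\bf s}$ in the open simplex with $s_2,s_3$ near $1/2$ and $s_1$ small enough that $s_1<\tanh^2r\cdot s_2s_3$, violating \eqref{ineq}; this is an open condition on $\widetilde{\bf s}$ and hence yields infinitely many H-unstable torus orbits $\Phi^{-1}(T^n_{r,\widetilde{\bf s}})$ via Lemma \ref{le1}. The main obstacle, if there is one, lies in the averaging step: a priori one might worry that for large $\alpha$ the single-triple inequality \eqref{ineq} is too weak to dominate $\tanh^2r\sum_{i\in A}1/s_i$, but the elementary count $\alpha/\binom{\alpha+1}{2}=2/(\alpha+1)\leq 1$ shows precisely why it suffices and why $\alpha=1$ is extremal.
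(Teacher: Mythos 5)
Your proof is correct and follows essentially the same route as the paper: expand $Q_{n,r}(\widetilde{\bf s},\widetilde{\bf m})$ explicitly for $\widetilde{\bf m}$ of the form \eqref{mr2}, read off \eqref{ineq} from the $\alpha=1$ case (where $Q_{n,r}=4(s_i-\tanh^2r\,s_js_k)/(s_is_js_k)$, exactly as in the paper), and dispose of $\alpha\geq 2$ by an elementary estimate. The only divergence is in that last step: the paper shows $Q_{n,r}>0$ for $\alpha\geq 2$ unconditionally by playing the $-4\tanh^2r\sum_{i\in A}1/s_i$ term against the $\binom{\alpha}{2}$ pair terms within the block of $-1$'s, whereas you invoke the hypothesis \eqref{ineq} and average over the $\binom{\alpha+1}{2}$ pairs in the block of $+1$'s; both are valid, the paper's variant merely making explicit that \eqref{ineq} is only ever needed for $\alpha=1$.
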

\begin{proof}
Suppose $\widetilde{\bf m}$ has the form \eqref{mr2}. If $\alpha\geq 2$, \eqref{q13} becomes
\begin{align*}
{Q}_{n,r}(\widetilde{\bf s},\widetilde{\bf m})&\geq -4\tanh^2r\Big(\sum_{k=1}^{\alpha} \frac{1}{s_k}\Big)+\sum_{1\leq k<l<\alpha}\frac{4}{s_ks_l}\\
&=-\frac{4\tanh^2r}{\alpha-1}\sum_{1\leq k<l<\alpha}\Big(\frac{1}{s_k}+\frac{1}{s_l}\Big)+\sum_{1\leq k<l<\alpha}\frac{4}{s_ks_l}
\\
&=\sum_{1\leq k<l<\alpha}\frac{4}{s_ks_l}\Big(1-\frac{\tanh^2r(s_k+s_l)}{\alpha-1}\Big)\\
&>0
\end{align*}
since $\tanh^2r(s_k+s_l)<1$. If $\alpha=1$, i.e., $\widetilde{\bf m}=(-1,1,1,0,\ldots,0)$, we have
\begin{align*}
{Q}_{n,r}(\widetilde{\bf s},\widetilde{\bf m})=\frac{4}{s_2s_3}-\frac{4\tanh^2r}{s_1},
\end{align*}
and this may be negative for  some ${\bf \tilde{s}}$. Since we replaced the  indices  so that $\widetilde{\bf m}=(-1,1,1,0,\ldots,0)$, this implies $Q_{n,r}(\widetilde{\bf s},\widetilde{\bf m})\geq 0$ if and only if the inequality
 \eqref{ineq} holds  for any distinct $i,j,k$ with $\sum_{i=1}^ns_i=1$. 
\end{proof}

Since we set $T^n_{r, \widetilde{\bf s}}=S^1(r_1)\times\cdots\times S^1(r_n)$ with $r_i:=\sinh r\sqrt{s_i}$ for $i=1,\ldots, n$ (see \eqref{param}), the inequality \eqref{ineq} is equivalent to
\begin{align}\label{ineq2}
\Big(1+\sum_{l=1}^n r_l^2\Big)^{1/2}r_i\geq r_jr_k,
\end{align}
where we used the relation $\sum_{i=1}^n s_i=1$. 
This proves the first assertion of Theorem \ref{maintheorem} (b). For example, if some $r_i$ is sufficiently small, then the inequality \eqref{ineq2} does not hold, and hence, the corresponding torus in $\mathbb{C}H^n$ is H-unstable.

Although there exist infinitely many H-unstable torus when $n\geq 3$, we can find an H-stable torus as follows:
The {\it Clifford torus} $T^n$ in $\mathbb{C}^n$ is the torus of the form $T^n=\{(re^{\sqrt{-1}\theta},\ldots, re^{\sqrt{-1}\theta}); e^{\sqrt{-1}\theta}\in S^1\}$ for $r\in (0,\infty)$. For this particular case, we prove

\begin{theorem}\label{cliff}
Let $T^n$ be the Clifford torus in $\mathbb{C}^n$ for $n\geq 1$. Then, $\Phi^{-1}(T^n)$ is Hamiltonian stable in $\mathbb{C}H^n$.
\end{theorem}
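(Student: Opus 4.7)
The plan is to apply Lemma~\ref{le1} to the monotone torus $\Phi^{-1}(T^n)$, which under the parametrization \eqref{param} corresponds to $\widetilde{\bf s}=(1/n,\ldots,1/n)$, and to verify directly that $Q_{n,r}(\widetilde{\bf s},{\bf m})\geq 0$ for every ${\bf m}\in \mathbb{Z}^n\setminus\{{\bf 0}\}$. With the abbreviations $M:=\sum_{i=1}^n m_i$, $P:=\sum_{i=1}^n m_i^2$, and $t:=\tanh^2 r\in(0,1)$, substituting $s_i=1/n$ collapses each of $a_1,a_2,a_3$ into a simple polynomial in $M,P,n$, turning $Q_{n,r}$ into a quadratic in $t$:
\begin{align*}
Q_{n,r}(\widetilde{\bf s},{\bf m}) = M^2(M^2-1)\,t^2 - 2nM^2(P-1)\,t + n^2(P^2+M^2-2P).
\end{align*}

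The central step is the algebraic identity
\begin{align*}
Q_{n,r}(\widetilde{\bf s},{\bf m}) = \bigl(n(P-1)-tM^2\bigr)^2 + M^2(n^2-t^2) - n^2,
\end{align*}
which one checks by expanding the square and matching coefficients of $t$. This presents $Q$ as a manifest non-negative term plus a correction that depends only on $|M|$, $n$, and $t$, after which the argument finishes by a short case analysis on $|M|$.

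When $n=1$ we have $P=M^2$ and the quadratic collapses to $M^2(M^2-1)(t-1)^2\geq 0$. For $n\geq 2$, I split into three sub-cases. If $M=0$, then ${\bf m}\neq {\bf 0}$ forces $P\geq 2$ and $Q=n^2P(P-2)\geq 0$. If $|M|=1$, the leading coefficient vanishes and the identity simplifies to $Q=n(P-1)\bigl(n(P-1)-2t\bigr)\geq 0$, with equality exactly when ${\bf m}=\pm e_i$. If $|M|\geq 2$, then $M^2\geq 4$ and $t^2<1$ yield
\begin{align*}
M^2(n^2-t^2)-n^2 \;\geq\; 4(n^2-1)-n^2 \;=\; 3n^2-4 \;\geq\; 0,
\end{align*}
so the identity gives $Q\geq 0$.

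The main obstacle is guessing the right rearrangement: a direct discriminant computation for the quadratic in $t$ does not have an obviously favourable sign under the natural constraint $M^2\leq nP$, and it is the above sum-of-squares decomposition that cleanly isolates the part controlled by $(M,P)$ from a residual depending only on $|M|$, $n$. Once the identity is available, every remaining estimate is a one-line inequality. As a side remark, the null directions $\{{\bf m}=\pm e_i\}$ and $\{{\bf m}=e_i-e_j\}$ produced by the analysis correspond precisely to the functions $\cos\theta_i,\sin\theta_i$ and $\cos(\theta_i-\theta_j),\sin(\theta_i-\theta_j)$, i.e.\ to the Hamiltonian functions induced by the $K$-action on $\mathbb{C}H^n$, which is the natural input for the rigidity claim of Theorem~\ref{maintheorem}(b).
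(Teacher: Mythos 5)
Your proof is correct and follows essentially the same route as the paper: after specializing to $s_i=1/n$, your identity $Q=\bigl(n(P-1)-tM^2\bigr)^2+M^2(n^2-t^2)-n^2$ is exactly the paper's completion of squares $\{(nA-\tanh^2 r\,B^2)-n\}^2-n^2+(n^2-\tanh^4 r)B^2$ with $A=P$, $B=M$, $t=\tanh^2 r$, and the $|M|\geq 2$ estimate is the same. The only cosmetic difference is that you treat the cases $M\in\{0,\pm 1\}$ directly from the quadratic in $t$, whereas the paper disposes of them by citing its earlier lemmas on the vanishing of $a_3$.
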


\begin{proof}
 In our notation described in the previous subsections, the Clifford torus is exactly the case when $s_1=\ldots=s_n=1/n$. We shall show $Q_{n,r}(\widetilde{\bf s}, {\bf m})\geq 0$.
 
First, we consider the case when $a_3({\bf m})=0$. Since $s_1=\ldots=s_n=1/n$, the inequality \eqref{ineq} is equivalent to $n\geq \tanh^2r$, and this holds for any $n\geq 1$. Combining this with Lemma \ref{le2} and \ref{le3}, we obtain $Q_{n,r}(\widetilde{\bf s}, {\bf m})\geq 0$ for $a_3({\bf m})=0$. 

Next, we consider the case when $a_3({\bf m})\neq 0$.  Setting $A:=\sum_{i=1}^nm_i^2$ and $B:=\sum_{i=1}^nm_i$, we see
\begin{align*}
Q_{n,r}(\widetilde{\bf s}, {\bf m})&=n^2(A^2+B^2-2A)-2\tanh^2r\cdot nB^2(A-1)+\tanh^4rB^2(B^2-1).
\end{align*}
If $n=1$, we have $B^2=A$, and hence, 
$
Q_{1,r}(\widetilde{\bf s}, {\bf m})
%&=A(A-1)-2\tanh^2rA(A-1)+\tanh^4r A(A-1)\\
=(1-\tanh^2r)^2A(A-1)
\geq 0.
$
Here, the equality holds if and only if $A=m_1^2=0$ or $1$.
For $n\geq 2$, we estimate as follows:
\begin{align*}
Q_{n,r}(\widetilde{\bf s}, {\bf m})&=n^2(A^2+B^2-2A)-2\tanh^2r\cdot nB^2(A-1)+\tanh^4r\cdot B^2(B^2-1)\\
&=(nA-\tanh^2rB^2)^2-2n(nA-\tanh^2r B^2)+(n^2-\tanh^4r)B^2\\
&=\Big{\{}(nA-\tanh^2rB^2)-n\Big{\}}^2-n^2+(n^2-\tanh^4r)B^2\\
&\geq -n^2+(n^2-\tanh^4r)\cdot 4\\
&=3n^2-4\tanh^2r\\
&>0,
\end{align*}
where, in the first inequality, we used the fact $|B|\geq 2$ since $a_3({\bf m})\neq 0$.  This proves the theorem.
\end{proof}

\begin{remark}
{\rm We shall show in Subsection 4.4 below the pull-back of Clifford torus $\Phi^{-1}(T^n)$ is rigid, namely, $Q_{n,r}(\widetilde{\bf s}, {\bf m})=0$ if and only if the corresponding hamiltonian $u_{\bf m}^c$ or $u_{\bf m}^s$ generates an infinitesimal isometry on $\mathbb{C}H^n$. Therefore, one can find a torus orbit which is sufficiently close to $\Phi^{-1}(T^n)$ and H-stable in $\mathbb{C}H^n$ since $Q_{n,r}(\widetilde{\bf s}, {\bf m})$ is continuous with respect to $\widetilde{\bf s}$. In this sense, the H-stable torus orbit in $\mathbb{C}H^n$ is not unique. %ここはちゃんと確かめないとまずい
}
\end{remark}

\subsection{The case when $n=2$.} In this subsection, we consider another special situation, that is, when $n=2$. Note that Proposition \ref{p3} is not valid for this case. In fact, we prove the following result:
\begin{theorem}\label{n2}
Every Lagrangian torus orbits in $\mathbb{C}H^2$ is Hamiltonian stable.
\end{theorem}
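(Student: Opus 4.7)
The plan is to apply Lemma~\ref{le1} and reduce H-stability of $\Phi^{-1}(T^2_{r,\widetilde{\bf s}})$ to showing $Q_{2,r}(\widetilde{\bf s},{\bf m})\ge 0$ for every ${\bf m}=(m_1,m_2)\in\mathbb{Z}^2\setminus\{\mathbf{0}\}$, every $\widetilde{\bf s}$ in the open simplex, and every $t:=\tanh^2 r\in(0,1)$. Setting $B:=m_1+m_2$, the cases $|B|\le 1$ are handled by the lemmas of the previous subsection: $B=0$ is Lemma~\ref{le2}; for $|B|=1$, by the symmetry $Q({\bf m})=Q(-{\bf m})$ one may assume $B=1$, Lemma~\ref{le3} covers the subcase with no $m_i=-1$, and the only remaining possibility ${\bf m}=(-1,2)$ (up to ordering) falls under Lemma~\ref{le4}. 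Crucially, the exceptional form \eqref{mr2} of Proposition~\ref{p3} never arises for $n=2$, as it requires $n\ge 2\alpha+1\ge 3$.

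The remaining case is $|B|\ge 2$, for which I may assume $B\ge 2$ by symmetry. Then $a_3=B^2(B^2-1)\ge 12>0$, so $Q(t)=a_3 t^2-2 a_2 t+a_1$ is a convex quadratic in $t$, with $Q(0)=a_1\ge 0$ by Oh's H-stability of the $T^n$-orbits in $\mathbb{C}^n$. Non-negativity of $Q$ on $[0,1]$ then follows from verifying either (A)~$a_1 a_3\ge a_2^2$ (the global minimum is non-negative), or (B)~$a_2\ge a_3$ together with $Q(1)\ge 0$ (the vertex lies at or beyond $t=1$). A direct computation gives
\[
a_1 a_3-a_2^2=-B^2\cdot\frac{m_1 m_2\bigl[m_1 m_2-4(B^2-1)\,s_1 s_2\bigr]}{s_1^2 s_2^2},
\]
and I analyze this according to the sign pattern of $(m_1,m_2)$: (i) if $m_1 m_2=0$, then $a_1 a_3=a_2^2$ and the factorization $Q=B^2(B^2-1)(1/s_i-t)^2\ge 0$ closes the case; (ii) if $m_1,m_2>0$, the cutoff $s_1 s_2=m_1 m_2/[4(B^2-1)]$ separates the range so that (A) holds above it and (B) (with $a_2\ge a_3$) holds below, the key observation being that the locus $\{a_2=a_3\}$, viewed as a quadratic equation in $s_1$, lies inside $\{a_1 a_3\ge a_2^2\}$; (iii) if $m_1 m_2<0$, writing $m_2=-k$ with $k\ge 1$ and $m_1=B+k\ge 3$, the explicit estimate $a_2-a_3\ge 2 B^2 k(B+k)>0$ (obtained from $1/s_i>1$) puts us in case (B).

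It remains to verify $Q(1)\ge 0$ in the sub-cases where (B) is invoked. Using the identity
\[
(s_1 s_2)^2 Q(1)=(m_1 s_2-m_2 s_1)^2\!\bigl\{(m_1 s_2-m_2 s_1)^2-1\bigr\}+2B(m_1 s_2+m_2 s_1)s_1 s_2-B^2(s_1 s_2)^2,
\]
this reduces to non-negativity of a downward-opening quadratic in $R:=s_1 s_2\in(0,1/4]$. The main obstacle I anticipate is verifying this bound uniformly in the arithmetic data $(m_1,m_2)$ with $B\ge 2$; the analysis should reduce, after careful bookkeeping, to elementary inequalities enforced by the integrality of $(m_1,m_2)$ together with $B\ge 2$, thereby completing the proof.
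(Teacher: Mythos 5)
Your overall strategy coincides with the paper's: reduce to the sign of $Q_{2,r}(\widetilde{\bf s},{\bf m})$ via Lemma \ref{le1}, dispose of $|m_1+m_2|\leq 1$ using Lemmas \ref{le2}--\ref{le4} (and you correctly note that the exceptional pattern \eqref{mr2} requires $n\geq 3$, so it never occurs here), and for $|m_1+m_2|\geq 2$ view $Q$ as a convex quadratic in $t=\tanh^2r$ whose non-negativity on $(0,1)$ is equivalent to ``either (A) $a_1a_3\geq a_2^2$, or (B) $a_2\geq a_3$ and $Q(1)\geq 0$.'' Your identity for $a_1a_3-a_2^2$ is correct (and cleaner than anything in the paper: it follows from $P-BC=-m_1m_2/(s_1s_2)$ and $C^2-A'=2m_1m_2/(s_1s_2)$ in the obvious notation), your expression for $(s_1s_2)^2Q(1)$ agrees with the paper's \eqref{t1}, and the bound $a_2-a_3> 2B^2k(B+k)$ for $m_1m_2<0$ checks out.

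There is, however, a genuine gap: you never prove $Q(1)=a_1-2a_2+a_3\geq 0$ in the sub-cases where alternative (B) is invoked, namely \emph{all} ${\bf m}$ with $m_1m_2<0$ (where your own identity shows $a_1a_3-a_2^2<0$, so (A) is unavailable) and part of the range of $\widetilde{\bf s}$ when $m_1,m_2>0$. You explicitly defer this step (``should reduce, after careful bookkeeping, to elementary inequalities''), but it is precisely the hardest half of the argument: in the paper it occupies cases (ii-a) and (ii-b), requiring the rearrangement \eqref{t2}, a completion of squares in $s_1$ to show $s_2^2m_1^2+s_1^2m_2^2+2s_1s_2-1>0$ when $m_1m_2<0$, and for $m_1m_2>0$ a two-stage estimate split according to whether $\min(m_1,m_2)>1$ or $=1$. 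Your proposed reduction to ``a downward-opening quadratic in $R=s_1s_2$'' does not work as stated, since the coefficients $\gamma=m_1s_2-m_2s_1$ and $\delta=m_1s_2+m_2s_1$ also vary with $s_1$. A secondary gap: the ``key observation'' that the locus $\{a_2=a_3\}$ lies in $\{a_1a_3\geq a_2^2\}$ for $m_1,m_2>0$ is asserted without proof; it is true, but the clean way to obtain it (and the route the paper takes, which bypasses your cutoff in $s_1s_2$ entirely) is to show that $a_2<a_3$ forces $a_1a_3-a_2^2>a_3(a_1-a_3)>0$, using $\min_{s_1}a_1=(m_1+m_2)^4>a_3$. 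Until $Q(1)\geq 0$ is actually established, the proof is incomplete.
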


\begin{proof}
We shall prove $Q_{2,r}(\widetilde{\bf s},{\bf m})\geq 0$. In the following, we simply write $a_i(\widetilde{\bf s},{\bf m})$ in $Q_{2,r}(\widetilde{\bf s},{\bf m})$ by $a_i$. Note that all coefficient $a_i$ are non-negative.   When $a_3=0$, the results in subsection 4.1 implies $Q_{2,r}(\widetilde{\bf s},{\bf m})\geq 0$ for any $r$, $\widetilde{\bf s}$ and ${\bf m}$. Thus,  we assume $a_3\neq0$, or equivalently,
\begin{align*}
m_1+m_2\neq 0\quad {\rm and}\quad m_1+m_2\neq \pm1
\end{align*}
in the lest of this proof.
 Moreover, if $m_i=0$ for some $i$,  the problem is reduced to the case when $n=1$, and this has already been considered in subsection 4.2. Therefore, we suppose $m_i\neq 0$ for $i=1,2$. Our claim is $Q_{2,r}(\widetilde{\bf s},{\bf m})>0$ (strictly positive) for such ${\bf m}$.

Since $a_3>0$ and
\begin{align*}
Q_{n,r}(\widetilde{\bf s},{\bf m})=a_3\Big(\tanh^2r-\frac{a_2}{a_3}\Big)^2-\frac{a_2^2}{a_3}+a_1,
\end{align*}
there are two possibilities: 
\begin{itemize}
\item[(i)] If $0<a_2/a_3<1$, then $Q_{n,r}(\widetilde{\bf s}, {\bf m})>0$ for any $r\in (0,\infty)$ if and only if 
\begin{align*}
a_1a_3-a_2^2>0.
\end{align*}
\item[(ii)] If $a_2/a_3\geq 1$, then $Q_{n,r}(\widetilde{\bf s}, {\bf m})> 0$ for any $r\in (0,\infty)$ if and only if 
\begin{align*}
a_1-2a_2+ a_3> 0.
\end{align*}
\end{itemize}

Let us consider the case (i). Then, we have $0<a_2<a_3$, and hence,
\begin{align*}
a_1a_3-a_2^2>a_1a_3-a_3^2=a_3(a_1-a_3).
\end{align*}
Thus, it is sufficient to prove  $a_1(\widetilde{\bf s}, {\bf m})-a_3({\bf m})> 0$. Since $s_1+s_2=1$, we consider a function for $s_1\in (0,1)$ by
\begin{align*}
&f(s_1):=a_1(\widetilde{\bf s}, {\bf m})%=\sum_{i=1}^2 \frac{1}{s_i^2}m_i^2(m_i^2-1)+\frac{2}{s_1s_2}m_1m_2(m_1m_2+1)\\
=\frac{\alpha_1}{s_1^2}+\frac{\alpha_2}{(1-s_1)^2}+2\beta\Big(\frac{1}{s_1}+\frac{1}{1-s_1}\Big),\\
&\textup{where $\alpha_i:=m_i^2(m_i^2-1)$ and $\beta:=m_1m_2(m_1m_2+1)$}.
\end{align*}
 Note that $\alpha_i\geq 0$ and $\beta\geq 0$, and $\alpha_1=\alpha_2=\beta=0$ if and only if $(m_1,m_2)=(1,-1)$ or $(-1,1)$ since $m_i\neq 0$. However,  this is not the case since $a_3\neq 0$. Thus, we may assume $\alpha_i>0$ or $\beta>0$ in the following. An elementary calculation shows that
\begin{align*}
\frac{\partial f}{\partial s_1}&=-\frac{2\alpha_1}{s_1^3}+\frac{2\alpha_2}{(1-s_1)^3}+2\beta\Big{\{}-\frac{1}{s_1^2}+\frac{1}{(1-s_1)^2}\Big{\}},\\
\frac{\partial^2 f}{\partial s_1^2}&=\frac{6\alpha_1}{s_1^4}+\frac{6\alpha_2}{(1-s_1)^4}+2\beta\Big{\{}\frac{2}{s_1^3}+\frac{2}{(1-s_1)^3}\Big{\}}.
\end{align*}
By assumptions, we have $\partial^2 f/\partial s_1^2>0$,  $\partial f/\partial s_1\to -\infty$ as $s_1\to 0$ and $\partial f/\partial s_1\to \infty$ as $s_1\to 1$, and hence, there exists a unique minimizer of the function $f(s_1)$ in the interval $(0,1)$. One can easily check that the minimizer is explicitly given by
\begin{align*}
s_1=\frac{m_1}{m_1+m_2},
\end{align*}
and 
\begin{align*}
\mathop{\rm min}_{0<s_1<1} f(s_1)=(m_1+m_2)^4.
\end{align*}
Therefore, we see
\begin{align*}
a_1-a_3\geq (m_1+m_2)^4-(m_1+m_2)^2\{(m_1+m_2)^2-1\}=(m_1+m_2)^2>0.
\end{align*}
Thus, we conclude $a_1a_2-a_3^2>0$ for the case (i).
%In fact, at the point, we see
%\begin{align*}
%\frac{\partial f}{\partial s_1}=\frac{-2(m_1^2-1)(m_1+m_2)^3}{m_1}+\frac{2(m_2^2-1)(m_1+m_2)^3}{m_2}+\frac{2(m_1-m_2)(m_1m_2+1)(m_1+m_2)^3}{m_1m_2}=0
%\end{align*}

Next, we consider the case (ii). Setting 
\begin{align*}
A:=\sum_{i=1}^2\frac{m_i^2}{s_i^2},\quad B:=\sum_{i=1}^2\frac{m_i^2}{s_i},\quad C:=\sum_{i=1}^2\frac{m_i}{s_i},\quad D:=\sum_{i=1}^2 m_i,
\end{align*}
we see
\begin{align*}
a_1-2a_2+a_3&=(B^2+C^2-2A)-2 (D^2B-DC)+D^2(D^2-1)\\
%&=(B-\tanh^2r\cdot D^2)^2+(C^2-2A)+2\tanh^2r\cdot DC-\tanh^4r\cdot D^2\nonumber\\
&=(B-D^2)^2-(C-D)^2+2(C^2-A)\nonumber\\
&=\Big{\{}\sum_{i=1}^2\Big(\frac{1}{s_i}-1\Big)m_i^2-2m_1m_2\Big{\}}^2-\Big{\{}\sum_{i=1}^2\Big(\frac{1}{s_i}-1\Big)m_i\Big{\}}^2+4\frac{m_1m_2}{s_1s_2}.\nonumber
\end{align*}
By using $s_1+s_2=1$, this is equivalent to
\begin{align}\label{t1}
s_1^2s_2^2(a_1-2a_2+a_3)=(s_2m_1-s_1m_2)^4-(s_2^2m_1+s_1^2m_2)^2+4s_1s_2m_1m_2.
\end{align}
We divide two cases: 

(ii-a) Suppose $m_1m_2<0$. The equation \eqref{t1} is rearranged as
\begin{align}\label{t2}
s_1^2s_2^2(a_1-2a_2+a_3)
%&=s_2^4m_1^2(m_1^2-1)+s_1^4m_2^2(m_2^2-1)+6(s_2m_1)^2(s_1m_2)^2-2s_1^2s_2^2(m_1m_2)\\
%&\quad-4(s_2m_1)^3(s_1m_2)-4(s_2m_1)(s_1m_2)^3+4s_1s_2m_1m_2\nonumber\\
&=s_2^4m_1^2(m_1^2-1)+s_1^4m_2^2(m_2^2-1)+6(s_1s_2)^2(m_1m_2)(m_1m_2+1)\\
&\quad -4s_1s_2m_1m_2(s_2^2m_1^2+s_1^2m_2^2+2s_1s_2-1).\nonumber
\end{align}
Notice that the former three terms in \eqref{t2} are non-negative since $m_1, m_2\in \mathbb{Z}$. On the other hand, since $m_1+m_2\neq 0$ and $m_1m_2<0$, we have $m_1^2+m_2^2>2$, and hence
\begin{align*}
s_2^2m_1^2+s_1^2m_2^2+2s_1s_2-1&=(m_1^2+m_2^2-2)s_1^2-2(m_1^2-1)s_1+(m_1^2-1)\\
%&=(m_1^2+m_2^2)\Big(s_1-\frac{m_1^2}{m_1^2+m_2^2}\Big)^2-\frac{m_1^4}{m_1^2+m_2^2}+(m_1^2-1)\\
&=(m_1^2+m_2^2-2)\Big(s_1-\frac{m_1^2-1}{m_1^2+m_2^2-2}\Big)^2+\frac{(m_1^2-1)(m_2^2-1)}{m_1^2+m_2^2-2}\\
&>0.
\end{align*} 
%Here the equality holds if and only if $|m_1|=|m_2|=1$ since $0<s_1<1$ and this is not the case. 
Combining this with $m_1m_2<0$, we see the last term of \eqref{t2} is strictly positive, and hence, we obtain $a_1-2a_2+a_3>0$. 

(ii-b) Suppose $m_1m_2>0$. We may assume $0<m_1\leq m_2$. We set
\begin{align*}
\gamma:=s_2m_1-s_1m_2\quad {\rm and}\quad \delta:=s_2m_1+s_1m_2
\end{align*}
so that $s_2m_1=(\delta+\gamma)/2$ and $s_1m_2=(\delta-\gamma)/2$. First, we assume $m_1>1$. Then, we estimate \eqref{t1} as follows:
\begin{align*}
s_1^2s_2^2(a_1-2a_2+a_3)
&=\gamma^4-\Big{\{}\frac{s_2}{2}(\delta-\gamma)+\frac{s_1}{2}(\delta+\gamma)\Big{\}}^2+(\delta^2-\gamma^2)\\
&=\gamma^4-\Big{\{}\frac{\delta}{2}+\frac{s_1-s_2}{2}\gamma\Big{\}}^2+(\delta^2-\gamma^2)\\
&\geq\gamma^4-2\Big{\{}\frac{\delta^2}{4}+\frac{(s_1-s_2)^2}{4}\gamma^2\Big{\}}+(\delta^2-\gamma^2)\\
&=\gamma^4-\Big(1+\frac{(s_1-s_2)^2}{2}\Big)\gamma^2+\frac{\delta^2}{2}\\
&=\Big{\{}\gamma^2-\frac{1}{2}\Big(1+\frac{(s_1-s_2)^2}{2}\Big)\Big{\}}^2-\frac{1}{4}\Big(1+\frac{(s_1-s_2)^2}{2}\Big)^2+\frac{\delta^2}{2}\\
&\geq \frac{\delta^2}{2}-\frac{9}{16}\\
&=\frac{1}{16}[8\{m_1+(m_2-m_1)s_1\}^2-9]\\
&\geq \frac{1}{16}(8m_1^2-9)\\
&>0.
\end{align*}
Here, in the second inequality, we used 
\begin{align*}
\frac{1}{4}\Big(1+\frac{(s_1-s_2)^2}{2}\Big)^2&=\frac{1}{4}\Big(1+\frac{(2s_1-1)^2}{2}\Big)^2\leq\frac{1}{4}\Big(1+\frac{1}{2}\Big)^2=\frac{9}{16}
\end{align*}
since $s_1+s_2=1$ and $0<s_1<1$. The third inequality is due to the assumption $m_2\geq m_1>0$. 
Finally, we consider the case when $1=m_1\leq m_2$. Then, by using \eqref{t2} and $s_1+s_2=1$, one easily verifies that
\begin{align*}
s_1^2s_2^2(a_1-2a_2+a_3)%&=s_1^4m_2^2(m_2^2-1)+6(s_1s_2)^2m_2(m_2+1)-4s_1s_2m_2(m_2^2-1)s_1^2\\
&=s_1^2m_2(m_2+1)\Big\{(m_2+1)(m_2+2)\Big(s_1-\frac{2}{m_2+1}\Big)^2+2\frac{m_2-1}{m_2+1}\Big\}>0
\end{align*}
since $0<s_1<1$ and $m_2\geq 1$. Thus, $a_1-2a_2+a_3>0$ for the case (ii). This completes the proof of theorem.
\end{proof}

\subsection{Rigidity of H-stable torus}
Recall that an H-stable Lagrangian submanifold is called {\it rigid} if the null space of the second variation under the Hamiltonian deformations is spanned by holomorphic Killing vector fields.
In order to consider the rigidity of Lagrangian torus orbit, we need a lemma:  Let $\mathfrak{su}(1,n)$ be the Lie algebra of $SU(1,n)$ the group of holomorphic isometries on $\mathbb{C}H^n$, and  $\mathfrak{su}(1,n)=\mathfrak{k}\oplus \mathfrak{p}$ the Cartan decomposition, namely, we set
\begin{equation*}
\begin{aligned}
 \fk&=
 \Big{\{}
\left[
\begin{array}{c|ccc}
w & & &  \\ \hline
 & & & \\
 & & A & \\
& & & 
\end{array}
\right];
w\in \fu(1),\ A\in\fu(n), w+{\rm tr}_{\mathbb{C}}A=0
\Big{\}}
=\mathfrak{s}(\fu(1)\oplus \fu(n)),
 \\
 \fp&=
 \Big{\{}
\left[
\begin{array}{c|ccc}
 & & {}^{t}\overline{z} &  \\ \hline
 & & & \\
z & &  & \\
& & & 
\end{array}
\right];
z\in \mathbb{C}^n
\Big{\}}
\simeq \mathbb{C}^n,\\
\end{aligned}
 \end{equation*}
where $\mathfrak{k}$ is the Lie algebra of the maximal compact subgroup $K$. For an element $X\in \mathfrak{su}(1,n)$, the fundamental vector field $\widetilde{X}$ gives a holomorphic Killing vector filed on $\mathbb{C}H^n$. Conversely, any holomorphic Killing vector field on $\mathbb{C}H^n$ is obtained in this way. Since ${L}_{\widetilde{X}}\omega=0$,  Cartan's formula implies $i_{\widetilde{X}}\omega$ is a closed form, where $i$ denotes the inner product. Moreover, since $M=\mathbb{C}H^n$ is simply connected, there exists a Hamiltonian function $f\in C^{\infty}(M)$ so that $i_{\widetilde{X}}\omega=df$.  We shall explicitly determine the Hamiltonian function in our case.
For convenience, we count the number of row and column of matrix in $\mathfrak{su}(1,n)$ from $0$ to $n$, e.g. the $(0,0)$-component is the upper left component of the matrix.
We take a basis of $\fk$ by
\begin{align*}
X_{ij}^c:&=\sqrt{-1}(E_{i,j}+E_{j,i}),\quad X_{ij}^s:=E_{i,j}-E_{j,i}\quad  \textup{for}\ 1\leq i<j\leq n\quad {\rm and}\\
Z_{i}:&=\sqrt{-1}(E_{0,0}-E_{i,i}) \quad \textup{for}\ i=1,\ldots, n,
\end{align*}
and a basis of $\fp$ by
\begin{align*}
 X_i^c:=\sqrt{-1}(E_{i,0}-E_{0,i})\quad X_i^s:=E_{i,0}+E_{0,i}\quad \textup{for}\ i=1,\ldots, n,
\end{align*}
 where $E_{i,j}$ is the matrix unit.

\begin{lemma}
The Hamiltonian functions on $B^{n} $ for the fundamental vector fields $\widetilde{X}_{ij}^s, \widetilde{X}_{ij}^c, \widetilde{Z}_i, \widetilde{X}_i^s$ and $\widetilde{X}_i^c$ are given by
\begin{align*}
f_{ij}^c(z):&=\frac{{\rm Re}( z_i\overline{z}_j)}{1-|z|^2}, \quad f_{ij}^s(z):=\frac{{\rm Im}( z_i\overline{z}_j)}{1-|z|^2}, \quad  h_i:=\frac{1}{2}\cdot \frac{1+|z_i|^2}{1-|z|^2},\\
 f_i^c(z):&=\frac{{\rm Re} z_i}{1-|z|^2}\quad {\rm and}\quad f_i^s(z):=\frac{{\rm Im} z_i}{1-|z|^2},
\end{align*}
respectively.
\end{lemma}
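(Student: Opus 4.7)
My plan is to compute each fundamental vector field $\widetilde X$ explicitly from the $SU(1,n)$-action on the ball model and then verify the identity $df=i_{\widetilde X}\omega$ directly, which determines $f$ up to an additive constant. Writing an element $A\in SU(1,n)$ in $(1,n)$-block form with $(0,0)$-entry $a\in\mathbb{C}$, top row $b^T$, first column $c$, and $n\times n$ block $D$, the induced action on $B^n\subset\mathbb{C}^n$ through the projective identification is $z\mapsto(Dz+c)/(a+b^Tz)$. Substituting $A=I+tX$ for each basis element and differentiating at $t=0$ yields $\widetilde X=\sum_j\dot z_j\partial_{z_j}+\overline{\dot z_j}\partial_{\bar z_j}$. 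For instance, $\widetilde{X_i^c}$ has $\dot z_j=\sqrt{-1}z_iz_j$ $(j\neq i)$ and $\dot z_i=\sqrt{-1}(1+z_i^2)$, while $\widetilde{Z_i}$ has $\dot z_j=-\sqrt{-1}z_j$ $(j\neq i)$ and $\dot z_i=-2\sqrt{-1}z_i$.

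For the $\mathfrak{k}$-elements $X_{ij}^c,X_{ij}^s,Z_i$, I would use that the $K$-action preserves $|z|$, so the K\"ahler potential $K=-\tfrac{1}{2}\log(1-|z|^2)$ satisfies $\widetilde X(K)=0$. Writing $\omega=d\beta$ with $\beta=\tfrac{1}{2}d^cK=\tfrac{\sqrt{-1}}{4(1-|z|^2)}\sum_j(z_jd\bar z_j-\bar z_jdz_j)$, Cartan's formula together with $L_{\widetilde X}\beta=0$ gives $i_{\widetilde X}\omega=-d(i_{\widetilde X}\beta)$. This reduces the problem to computing $i_{\widetilde X}\beta=\frac{1}{2(1-|z|^2)}\sum_j\mathrm{Im}(\bar z_j\dot z_j)$, and plugging in each of the three fundamental vector fields produces $f_{ij}^c, f_{ij}^s$ and $h_i$ up to an additive constant (the $\tfrac{1}{2}$ in $h_i$ is exactly this freedom).

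For the $\mathfrak{p}$-elements $X_i^c,X_i^s$, the potential only transforms by a K\"ahler transformation, $K(A\cdot z)=K(z)+\mathrm{Re}\log(a+b^Tz)$, so the previous shortcut fails and I would contract $\widetilde X$ directly into the explicit formula \eqref{om} for $\omega$. Setting $\alpha=\sum_j\bar z_jdz_j$ and $\mathcal A=\sum_j\bar z_j\dot z_j$, the key ingredients are $i_{\widetilde X}(\alpha\wedge\bar\alpha)=\mathcal A\,\bar\alpha-\alpha\,\bar{\mathcal A}$, $\alpha+\bar\alpha=d|z|^2$, and $z_i+\bar z_i=2x_i$. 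For $X_i^c$ one finds $\mathcal A=\sqrt{-1}(z_i|z|^2+\bar z_i)$; after combining the $(1-|z|^2)^{-1}$ and $(1-|z|^2)^{-2}$ pieces of $\omega$, the identity $z_i(1-|z|^2)+(z_i|z|^2+\bar z_i)=z_i+\bar z_i=2x_i$ collapses everything to $\pm d(x_i/(1-|z|^2))$, giving $f_i^c$; the case $X_i^s$ is analogous, with $\mathcal A=\bar z_i-z_i|z|^2$ and output $f_i^s=y_i/(1-|z|^2)$. The main obstacle I anticipate is precisely this cancellation between the two denominators, which is governed by the Hermitian-form identity $|a+b^Tz|^2-|c+Dz|^2=1-|z|^2$ (a restatement of $A\in SU(1,n)$). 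As a consistency check one can use the shortcut $\widetilde X(K)=\mathrm{Re}(X_{00}+\sum_jX_{0,j}z_j)$, obtained by differentiating $K(A(t)\cdot z)=K(z)+\mathrm{Re}\log(a(t)+b(t)^Tz)$, which predicts $\widetilde{X_i^c}(K)=y_i$, $\widetilde{X_i^s}(K)=x_i$ and $\widetilde{Z_i}(K)=0$, matching the direct calculations.
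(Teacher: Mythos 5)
Your proposal is correct, and since the paper explicitly omits the proof (``one can check this lemma by a straightforward calculation''), your direct verification is exactly the intended argument. The ingredients all check out: the induced action $z\mapsto (Dz+c)/(a+b^Tz)$, the resulting $\dot z_j$ for each basis element, the primitive $\beta$ with $i_{\widetilde X}\beta=\frac{1}{2(1-|z|^2)}\sum_j\mathrm{Im}(\bar z_j\dot z_j)$ and the Cartan-formula shortcut for the $\mathfrak{k}$-part (which needs not only $\widetilde X(K)=0$ but also holomorphy of the flow so that $\partial K$ and $\bar\partial K$ are separately preserved --- worth saying explicitly), and the cancellation $z_i(1-|z|^2)+(z_i|z|^2+\bar z_i)=2x_i$ for the $\mathfrak{p}$-part. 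One byproduct of your computation worth recording: with the paper's convention $i_{\widetilde X}\omega=df$ (which is the one consistent with its moment map $\mu(z)=-\tfrac12|z|^2/(1-|z|^2)$ for $\tilde v$), the pairing that actually comes out is $\widetilde{X^s_{ij}}\mapsto f^s_{ij}$, $\widetilde{X^c_{ij}}\mapsto -f^c_{ij}$, $\widetilde{Z_i}\mapsto h_i-\tfrac12$, $\widetilde{X^s_i}\mapsto f^s_i$, $\widetilde{X^c_i}\mapsto -f^c_i$, rather than the $c\leftrightarrow s$ order listed in the lemma; this is a harmless labelling slip in the statement, since only the span of these functions is used in the rigidity argument, but your calculation is the one that detects it.
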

One can check this lemma by a straightforward calculation. Thus, we omit the proof. 
\begin{proposition}
Let $\Phi^{-1}(T^n_{r,\tilde{s}})$ be an H-stable Lagrangian torus given in Theorem \ref{cliff} and \ref{n2}. Then, $\Phi^{-1}(T^n_{r,\tilde{s}})$ is rigid. 
\end{proposition}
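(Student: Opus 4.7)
The plan is to determine the null space of the second variational form $Q_{n,r}(\widetilde{\bf s},{\bf m})$ explicitly, then match each null eigenfunction on $L = \Phi^{-1}(T^n_{r,\widetilde{\bf s}})$ with the restriction to $L$ of the Hamiltonian function of a holomorphic Killing vector field on $\mathbb{C}H^n$ (using the previous lemma). Since $\{u_{\bf m}^c,u_{\bf m}^s\}_{{\bf m}\in\mathbb{Z}^n}$ is an $L^2$-orthogonal basis consisting of joint eigenfunctions of the second variation, this identification of null eigenfunctions with Killing Hamiltonians is precisely what rigidity requires.

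First, I would revisit the equality cases in the H-stability analysis to pin down the null modes. Lemma \ref{le2} gives $Q=0$ (under $\sum m_i=0$) exactly when $\mathbf{m}=\pm(e_i-e_j)$ for some $i\neq j$. Lemma \ref{le3} gives $Q=0$ (under $\sum m_i=\pm 1$ with no entry equal to $\mp 1$) exactly when $\mathbf{m}=\pm e_i$. Lemma \ref{le4} rules out equality in the remaining $a_3=0$ sub-cases. Finally, for $a_3\neq 0$, the proofs of Theorem \ref{cliff} and Theorem \ref{n2} actually establish \emph{strict} inequality: the estimate $Q_{n,r}\geq 3n^2-4\tanh^2 r$ with $B^2\geq 4$ in the Clifford case (and $Q_{1,r}=(1-\tanh^2r)^2A(A-1)$ for $n=1$), respectively the three cases (i), (ii-a), (ii-b) in the proof of Theorem \ref{n2}, all give strict positivity whenever $a_3\neq 0$ and (for $n=2$) no $m_i=0$. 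Combined, the null modes are exactly
\begin{equation*}
\mathbf{m}\in\{0\}\cup\{\pm e_i: 1\leq i\leq n\}\cup\{\pm(e_i-e_j):1\leq i<j\leq n\},
\end{equation*}
spanning the $(n^2+n+1)$-dimensional space
$\mathcal{N}:=\mathrm{span}\bigl\{1,\ \cos\theta_i,\sin\theta_i,\ \cos(\theta_i-\theta_j),\sin(\theta_i-\theta_j)\bigr\}$ inside $C^\infty(L)$.

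Next, I would restrict each Hamiltonian function of a Killing vector field produced by the preceding lemma to $L$. On $\Phi^{-1}(T^n_{r,\widetilde{\bf s}})$ we have $z_i=\tanh r\,\sqrt{s_i}\,e^{\sqrt{-1}\theta_i}$ and $1-|z|^2=\cosh^{-2}r$, so a direct substitution gives
\begin{align*}
f_i^c|_L &= \sinh r\cosh r\sqrt{s_i}\cos\theta_i, & f_i^s|_L &= \sinh r\cosh r\sqrt{s_i}\sin\theta_i,\\
f_{ij}^c|_L &= \sinh^2r\sqrt{s_is_j}\cos(\theta_i-\theta_j), & f_{ij}^s|_L &= \sinh^2r\sqrt{s_is_j}\sin(\theta_i-\theta_j),\\
h_i|_L &= \tfrac{1}{2}(\cosh^2r+\sinh^2r\cdot s_i).
\end{align*}
Since $s_i>0$, every scalar is nonzero, so these restrictions span $\mathcal{N}$ exactly. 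A short formal verification—namely $i_{\widetilde{X}}\omega=df$ on $M$ pulls back to $\iota^\ast df=d(f|_L)$, which is the Hamiltonian function on $L$ of the variational vector field $\widetilde{X}|_L$—shows that the restriction of $f$ is precisely the Hamiltonian function on $L$ associated to the normal projection of $\widetilde{X}$. Combining the two steps, the null space of the second variation coincides with the span of the Hamiltonian functions on $L$ arising from holomorphic Killing vector fields on $\mathbb{C}H^n$, which is the definition of rigidity.

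The main obstacle is not conceptual but bookkeeping: one must verify that the strict-inequality margins in the proofs of Theorem \ref{cliff} and Theorem \ref{n2} are genuinely strict in every sub-case other than the explicitly listed null modes, so that the null space has exactly the dimension needed for the dimension count against $\mathcal{N}$ to match. Once that is confirmed, the correspondence above is transparent because it is purely a matter of evaluating trigonometric Hamiltonians at $z_i=\tanh r\,\sqrt{s_i}\,e^{\sqrt{-1}\theta_i}$.
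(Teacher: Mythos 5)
Your proposal is correct and follows essentially the same route as the paper: extract the exact equality cases from Lemmas \ref{le2}--\ref{le4}, Proposition \ref{p3} and the strict estimates in Theorems \ref{cliff} and \ref{n2} to identify the null modes $\pm e_i$, $\pm(e_i-e_j)$ (plus constants), and then match them, via restriction to $z_i=\tanh r\sqrt{s_i}e^{\sqrt{-1}\theta_i}$, with the Hamiltonians $f_i^{\kappa}, f_{ij}^{\kappa}$ of holomorphic Killing fields (with $h_i$ restricting to constants, i.e.\ $(\widetilde{Z}_i)^{\perp}=0$). Your explicit normalizing constants $\sinh r\cosh r\sqrt{s_i}$ and $\sinh^2 r\sqrt{s_is_j}$ are in fact more accurate than the factor $(1-\tanh^2 r)$ written in the paper, though only nonvanishing of the constant matters.
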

\begin{proof}
By Lemma \ref{le1} through \ref{le4} and Proposition \ref{p3}, the null space of the second variation is spanned by the following functions:
\begin{align*}
u_{ij}^c:&=\cos(\theta_i-\theta_j),\quad u_{ij}^s:=\sin(\theta_i-\theta_j),\\
u_i^c:&={\cos\theta_i}, \quad u_i^s:={\sin\theta_i}, 
\end{align*}
for $i,j=1,\ldots, n$ with $i \neq j$. Recall that $\Phi^{-1}(T^n_{r,\tilde{s}})$ is contained in a geodesic hypersphere $S^{2n-1}_r=S^{2n-1}(\tanh r)$ in $B^{n}.$ For the fixed $r$,  we see
\begin{align*}
 u_{ij}^{\kappa}=(1-\tanh^2r) f_{ij}^{\kappa}|_{\Phi^{-1}(T^n_{r,\widetilde{s}})}\quad {\rm and}\quad u_i^{\kappa}=(1-\tanh^2r)f_{i}^{\kappa}|_{\Phi^{-1}(T^n_{r,\widetilde{s}})}
  \end{align*} for $\kappa=s$ or $c$. Therefore, the null vectors  $J\nabla_1 {u}_{i,j}^{\kappa}$ and $J\nabla_1 \tilde{u}_{i}^{\kappa}$ coincides with normal projections of some holomorphic Killing vector fields on $B^{n}$. Note that  $(\widetilde{Z}_i)^{\perp}=0$ along $\Phi^{-1}(T^n_{r,\tilde{s}})$. This proves the proposition.
\end{proof}

\subsection{Remarks on Hamiltonian volume minimizing property}
In this last section, we mention the Hamiltonian volume minimizing property for torus orbits.
\begin{definition}[cf. \cite{IO}]
{\rm
(1) A diffeomorphism $\Psi$ on a symplectic manifold $(M,\omega)$ is called {\it Hamiltonian} if $\Psi=\Psi_V^1$ for the flow $\Psi_V^t$ with $\Psi_V^0=Id_M$ of the time-dependent Hamiltonian vector field $V_t$ defined by a compactly supported Hamiltonian function $f_t\in C^{\infty}_c([0,1]\times M)$. We denote the set of Hamiltonian diffeomorphism by ${\rm Ham}_c(M,\omega)$. For Lagrangian submanifolds $L_0$ and $L_1$ in $M$, we say $L_1$ is {\it Hamiltonian isotopic} to $L_0$ if  there exists $\Psi\in{\rm Ham}_c(M,\omega)$ so that $L_1=\Psi(L_0)$.

(2) A Lagrangian submanifold $L$ in a almost K\"ahler manifold $(M, \omega, J, g)$ is called {\it Hamiltonian volume minimizing} if $L$ satisfies ${\rm Vol}_g(\Psi(L))\geq {\rm Vol}_g(L)$ for any $\Psi\in {\rm Ham}_c(M, \omega)$. 
}
\end{definition}

Let $\phi_1: L\rightarrow \mathbb{C}H^n$ be a $C(K)$-invariant Lagrangian embedding into $\mathbb{C}H^n(-4)$ and set $\phi_2:=\Phi\circ \phi_1: L\rightarrow \mathbb{C}^n$ as described in Section 3. Suppose $\phi_1(L)$ is contained in $S^{2n-1}_r$. Since the volume forms of $g_1:=\phi_1^*g$ and $g_2:=\phi_2^*g_{0}$ are related by $dv_{g_1}=\cosh r\cdot dv_{g_2}$, we have ${\rm Vol}_{g_1}(L)=\cosh r\cdot {\rm Vol}_{g_2}(L)$.

Consider the case when  $\phi_2(L)=T{(r_1,\ldots, r_n)}=S^1(r_1)\times\cdots\times S^1(r_n)\subset \mathbb{C}^n$. Since $\sum_{i=1}^n r_i^2=\sinh^2r$, we see
\begin{align}\label{tv}
{\rm Vol}_{g_1}(\Phi^{-1}(T{(r_1,\ldots, r_n)}))=(2\pi)^n\cdot \Big(1+\sum_{i=1}^n r_i^2\Big)^{1/2}\prod_{i=1}^nr_i.
\end{align}
%It is easy to see that if $r_j'\leq r_j$ for any $j$, ${\rm Vol}_{g_1}(\Phi^{-1}(T{(r_1',\ldots, r_n')}))\leq {\rm Vol}_{g_1}(\Phi^{-1}(T{(r_1,\ldots, r_n)}))$  and the equality holds only when $r_j'=r_j$ for all $j$. 
Since $\Phi^{-1}$ preserves the Hamiltonian isotopy of $T{(r_1,\ldots, r_n)}$, the same argument described in Section 2 in \cite{IO} is valid for the case of torus orbits in $\mathbb{C}H^n$. Namely, setting $N(r_1,\ldots, r_n):=\sharp \{r_1,\ldots, r_n\}$, we see the following:

\begin{theorem}\label{nvol}
Suppose $n\geq 3$. If the inequality \eqref{ineq2} is not satisfied for some $i,j,k\in \{1,\ldots, n\}$ or $N(r_1,\ldots,r_{n})\geq 3$, then $\Phi^{-1}(T{(r_1,\ldots, r_n)})$ is not Hamiltonian volume minimizing in $\mathbb{C}H^n$.
\end{theorem}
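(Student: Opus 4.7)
The plan is to split the argument according to which of the two sufficient hypotheses holds, exploiting in both cases that $\Phi$ is a $K$-equivariant symplectic diffeomorphism and that \eqref{tv} gives the hyperbolic volume of a product torus directly in terms of its Euclidean radii.

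The case where \eqref{ineq2} fails for some distinct $i,j,k$ is essentially free. Taking $\widetilde{\mathbf{m}}\in \mathbb{Z}^n$ with $m_i=-1$, $m_j=m_k=+1$ and $m_l=0$ for $l\notin\{i,j,k\}$, Proposition~\ref{p3} immediately yields $Q_{n,r}(\widetilde{\mathbf{s}},\widetilde{\mathbf{m}})<0$. By Lemma~\ref{le1} this means that the Hamiltonian deformation of $\Phi^{-1}(T(r_1,\ldots,r_n))$ generated by the eigenfunction $u_{\widetilde{\mathbf{m}}}^c$ has strictly negative second variation, so the torus fails to be a local minimum of the hyperbolic volume in its Hamiltonian isotopy class, and a fortiori is not Hamiltonian volume minimizing.

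For the remaining case $N(r_1,\ldots,r_n)\geq 3$, the plan is to transport the Chekanov-type construction of Section~2 of \cite{IO} from $\mathbb{C}^n$ to $\mathbb{C}H^n$ via $\Phi$. Since $\Phi$ is a symplectic diffeomorphism, conjugation by $\Phi$ sends $\mathrm{Ham}_c(\mathbb{C}^n,\omega_0)$ bijectively onto a corresponding group of Hamiltonian diffeomorphisms of $(\mathbb{C}H^n,\omega)$, and therefore identifies Hamiltonian isotopy classes of $T(r_1,\ldots,r_n)$ with those of $\Phi^{-1}(T(r_1,\ldots,r_n))$. Picking three distinct radii among $r_1,\ldots,r_n$, the Iriyeh-Ono construction furnishes a Hamiltonian isotopy in $\mathbb{C}^n$ that deforms $T(r_1,\ldots,r_n)$ into a product torus $T(r_1',\ldots,r_n')$ with $\prod_l r_l' < \prod_l r_l$; by \eqref{tv} the hyperbolic volume of the pull-back is $(2\pi)^n\bigl(1+\sum_l (r_l')^2\bigr)^{1/2}\prod_l r_l'$, and I would show this is strictly smaller than $(2\pi)^n\bigl(1+\sum_l r_l^2\bigr)^{1/2}\prod_l r_l$.

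The main obstacle lies in this second case, because the functional being minimized in $\mathbb{C}H^n$ is the hyperbolic volume $(1+\sum r_l^2)^{1/2}\prod r_l$ rather than the Euclidean product $\prod r_l$ treated in \cite{IO}. The hard part will be verifying that the Chekanov-type isotopy persists in decreasing the joint quantity: the drop in $\prod r_l$ must not be overwhelmed by a possible increase in the correction factor $(1+\sum r_l^2)^{1/2}$. This amounts to a careful bookkeeping of $\sum r_l^2$ along the IO construction, and may require choosing the isotopy so as to keep $\sum r_l^2$ under control, or combining it with an area-preserving adjustment on the affected $\mathbb{C}^2$-factor.
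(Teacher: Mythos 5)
Your overall route is the same as the paper's: the failure of \eqref{ineq2} is handled by H-instability (Proposition \ref{p3} plus Lemma \ref{le1}, exactly as you describe), and the case $N(r_1,\ldots,r_n)\geq 3$ is handled by transporting the Chekanov/Iriyeh--Ono isotopy through $\Phi$ and comparing volumes via \eqref{tv}. The first case is complete as you state it.

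The one point you leave open --- whether the drop in $\prod_l r_l$ could be ``overwhelmed by a possible increase in the correction factor $(1+\sum_l r_l^2)^{1/2}$'' --- is not actually an obstacle, and no bookkeeping of $\sum_l r_l^2$ along the isotopy is needed. The precise output of Proposition 8 in \cite{IO} (built on Chekanov's theorem) is that when at least three of the radii are distinct, $T(r_1,\ldots,r_n)$ is Hamiltonian isotopic in $\mathbb{C}^n$ to a torus of the form $T(r_1,\ldots,r_{j-1},r_j',r_{j+1},\ldots,r_n)$ in which exactly one radius is replaced by a strictly smaller one and all the others are unchanged. Consequently both factors in \eqref{tv} strictly decrease simultaneously: $\prod_l r_l' < \prod_l r_l$ and $\sum_l (r_l')^2 < \sum_l r_l^2$, since the hyperbolic volume $(2\pi)^n\bigl(1+\sum_l r_l^2\bigr)^{1/2}\prod_l r_l$ is strictly increasing in each $r_l$ separately. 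So once you invoke the cited result in its precise form, the comparison is immediate and your proof closes without any additional ``area-preserving adjustment.''
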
 
More precisely, if the inequality \eqref{ineq2} is not satisfied, the torus is H-unstable. If $N(r_1,\ldots,r_{n})\geq 3$, using the result of Chekanov \cite{Che}, we can find a torus 
\begin{align*}
T(r_1,\ldots, r_{j-1}, r_j',r_{j+1}\ldots,r_n)
\end{align*}
 so that $r_j'<r_j$ and is Hamiltonian isotopic to $T(r_1,\ldots, r_n)$ (see proof of Proposition 8 in \cite{IO}). Thus, by the formula \eqref{tv}, $T(r_1,\ldots, r_n)$ is not Hamiltonian volume minimizing in $\mathbb{C}H^n$.
In this sense, almost all Lagrangian torus orbits in $\mathbb{C}H^n$ are not Hamiltonian volume minimizing when $n\geq 3$, however, the following problem is still remaining  as well as the case of $\mathbb{C}^n$ and $\mathbb{C}P^n$:
\begin{problem}\label{prob}
Is $\Phi^{-1}(T(a,\ldots, a))$ Hamiltonian volume minimizing in $\mathbb{C}H^n?$
\end{problem}
When $n=1$, $\gamma_0:=\Phi^{-1}(T(a))$ is just a geodesic circle in the hyperbolic disk $B^2$  and 
a simple closed curve $\gamma$ on $B^2$ is Hamiltonian isotopic to $\gamma_0$ if and only if $A(\gamma)=A(\gamma_0)$, where $A(\gamma)$ is the area with respect to the hyperbolic metric of the region enclosed by $\gamma$. For a simple closed curve in $B^2$, we have the isoperimetric inequality on the hyperbolic disc;
\begin{align*}
\textup{length}(\gamma)^2\geq 4\pi A(\gamma)+A(\gamma)^2
\end{align*}
where the equality holds if and only if $\gamma=\gamma_0$ (cf. \cite{Osserman}). Thus,  the statement of Problem \ref{prob} is affirmative when $n=1$.

\subsection*{Acknowledgements} 
 The author would like to thank Yoshihiro Ohnita for a suggestion and his interest in this work. He also thanks to Hiroshi Iriyeh for useful comments and Takahiro Hashinaga for helpful discussion. This work was supported by JSPS KAKENHI Grant Number JP18K13420.

\end{document}